\documentclass{amsart}
\usepackage{amsmath,amssymb, amsbsy}
\usepackage{amsthm}
\usepackage{color,psfrag}
\usepackage[dvips]{graphicx}
\usepackage{enumerate}
\usepackage[]{hyperref}

\setlength{\topmargin}{16pt} \setlength{\headheight}{20pt}
\setlength{\headsep}{30pt}
\setlength{\textwidth}{15cm}
\setlength{\textheight}{19cm}
\setlength{\oddsidemargin}{1cm} 
\setlength{\evensidemargin}{1cm} 
%
 \usepackage{mathrsfs}

\usepackage{color}
\usepackage{amsmath, amsthm, amssymb}
\usepackage{amsfonts}
\usepackage[ansinew]{inputenc}
\usepackage{graphicx}
\usepackage{caption}
\usepackage{subcaption}

\usepackage[english]{babel}
\usepackage{hyperref}

\usepackage{tikz}
\usepackage{rotating}



\usepackage{cite}
\usepackage{amscd}
\usepackage{color}
\usepackage{bm}
\usepackage{enumerate}

\usepackage{verbatim}
\usepackage{hyperref}
\usepackage{amstext}
\usepackage{latexsym}
%

\theoremstyle{plain}
\newtheorem{theorem}{Theorem}[section]

\newtheorem{remark}[theorem]{Remark}

\numberwithin{theorem}{section}
\numberwithin{equation}{section}

\newcommand{\average}{{\mathchoice {\kern1ex\vcenter{\hrule height.4pt
width 6pt depth0pt} \kern-9.7pt} {\kern1ex\vcenter{\hrule
height.4pt width 4.3pt depth0pt} \kern-7pt} {} {} }}

\def\R{\mathbb{R}}



\renewcommand{\a }{\alpha }
\renewcommand{\b }{\beta }
\renewcommand{\d}{\delta }

\newcommand{\D }{\Delta }

\newcommand{\e }{\varepsilon }
\newcommand{\g }{\gamma}

\newcommand{\G }{\Gamma}

\newcommand{\n }{\nabla }
\newcommand{\vp }{\varphi }
\renewcommand{\phi}{\varphi}

\renewcommand{\t }{\tau }

\renewcommand{\th }{\theta }

\renewcommand{\O }{\Omega }

\newcommand{\ov}{\overline}

\newcommand{\be}{\begin{equation}}
\newcommand{\ee}{\end{equation}}
\newcommand{\de}{\partial}

\newcommand{\ti}{\widetilde}

\newcommand{\ra}{{\rangle}}
\newcommand{\la}{{\langle}}

\newcommand{\N}{\mathbb{N}}

\newcommand{\C}{\mathbb{C}}



\newcommand{\cB}{{\mathcal B}}
\newcommand{\cC}{{\mathcal C}}

\newcommand{\cP}{{\mathcal P}}

\newcommand{\cR}{{\mathcal R}}
\newcommand{\cS}{{\mathcal S}}

\newcommand{\cU}{{\mathcal U}}

\renewcommand{\epsilon}{\varepsilon}






\newcommand{\weakly}{\rightharpoonup}

\renewcommand{\geq }{\geqslant}
\renewcommand{\le }{\leqslant}
\renewcommand{\leq }{\leqslant}

\newtheorem{Theorem}{Theorem}[section]
\newtheorem{Corollary}[Theorem]{Corollary}
\newtheorem{Lemma}[Theorem]{Lemma}
\newtheorem{Proposition}[Theorem]{Proposition}
\theoremstyle{definition}

\begin{document}

\title[Asymptotic expansions at Dirichlet-Neumann boundary  junctions]{Asymptotic expansions and unique
 continuation\\ at Dirichlet-Neumann boundary  junctions\\ for planar elliptic equations}

\author[Mouhamed Moustapha Fall]{Mouhamed Moustapha Fall}
\address{\hbox{\parbox{5.7in}{\medskip\noindent{M.M. Fall\\
          African Institute for Mathematical Sciences (A.I.M.S.) of Senegal,\\
         KM 2, Route de Joal, AIMS-Senegal\\
         B.P. 1418.
         Mbour, Senegal. \\[3pt]
         {\em{E-mail address: }}{\tt mouhamed.m.fall@aims-senegal.org.}}}}}
\author[Veronica Felli]{Veronica Felli}
\address{\hbox{\parbox{5.7in}{\medskip\noindent{V. Felli\\
Universit\`a di Milano--Bicocca,\\
        Dipartimento di Scienza dei Materiali, \\
        Via Cozzi
        55, 20125 Milano, Italy. \\[3pt]
        \em{E-mail address: }{\tt veronica.felli@unimib.it.}}}}}
\author[Alberto Ferrero]{Alberto Ferrero}
\address{\hbox{\parbox{5.7in}{\medskip\noindent{A. Ferrero\\
Universit\`a degli Studi del Piemonte Orientale,\\
Dipartimento di Scienze e Innovazione Tecnologica, \\
Viale Teresa Michel 11, 15121 Alessandria, Italy. \\[3pt]
        \em{E-mail address: }{\tt alberto.ferrero@uniupo.it}.}}}}

\author[Alassane Niang ]{Alassane Niang}
\address{\hbox{\parbox{5.7in}{\medskip\noindent{A. Niang\\       
         Cheikh Anta Diop University of Dakar (UCAD), \\
         Department of Mathematics, Faculty of Science and Technics, \\ 
         B.P. 5005. Dakar, Senegal. \\[3pt]
         {\em{E-mail address: }}{\tt  alassane4.niang@ucad.edu.sn.}}}}}

\thanks{2010 {\it Mathematics Subject Classification.} 35C20,  35J15, 35J25.\\
  \indent {\it Keywords.} Mixed boundary conditions, unique
  continuation, asymptotic expansion, monotonicity formula.\\
}

\date{\today}

 \begin{abstract}
   \noindent
We consider    elliptic equations in planar domains with mixed boundary conditions
   of Dirichlet-Neumann type. Sharp asymptotic expansions of  the solutions   and
   unique continuation properties from the Dirichlet-Neumann junction
are proved.
 \end{abstract}

\maketitle
\section{Introduction}\label{sec:intro}

The present paper deals with elliptic equations in planar
domains with mixed boundary conditions and aims at proving
asymptotic expansions and unique
continuation properties for solutions near
boundary points where a transition from Dirichlet to Neumann boundary
conditions occurs.

A great attention has been devoted to the problem of unique continuation
for solutions to partial differential equations starting from the paper
by Carleman
\cite{carleman}, whose  approach was based on some weighted
a priori inequalities. An alternative approach to unique continuation
was developed by Garofalo and Lin \cite{GL} for elliptic equations in divergence
form with variable coefficients, via
local doubling properties and Almgren monotonicity formula. The
latter approach has the advantage of giving not only unique continuation but also
precise asymptotics of solutions near a fixed point, via a suitable combination of
monotonicity methods with blow-up analysis, as done in \cite{FF, FF2, FFT,
  FFT2, FFT3}. The method based on doubling properties and
Almgren monotonicity formula has also been  successfully applied to treat
the problem of
unique continuation from the boundary in \cite{ae,aek,FF,tz2} under
homogeneous Dirichlet conditions and in \cite{tz1}
under homogeneous Neumann conditions. Furthermore, in
\cite{FF} a sharp asymptotic description of the behaviour of
solutions at conical boundary points was given  through a fine blow-up
analysis.  In  the present paper, we   extend the procedure
developed in  \cite{FFT,
  FFT2, FFT3,FF} to the case of mixed
Dirichlet/Neumann
boundary conditions, providing sharp asymptotic estimates for
solutions near the Dirichlet-Neumann junction and, as a consequence, unique continuation
properties. In addition, comparing our result with the aforementioned
papers,
here we also provide an estimate of the remainder term in the difference between the solution and its asymptotic profile.

Let $\O$ be an open subset of $\R^2$ with Lipschitz boundary.  Let
$\G_n\subset \de \O$ and $\G_d\subset \de \O$ be two nonconstant
curves (open in $\partial\Omega$) such that $\overline{\G_n} \cap \overline{\G_d}=\{P\}$ for some $P\in\partial\Omega$.
  We are interested in regularity of weak
solutions $u\in H^1(\O)$ to the mixed boundary value problem
\begin{equation}\label{eq:22}
  \begin{cases}
    -\Delta u= f(x)u,&\text{in }\Omega ,\\
\partial_\nu u=g(x)u,&\text{on }\Gamma_n,\\
 u=0,&\text{on }\Gamma_d,
  \end{cases}
\end{equation}
with $f\in L^\infty(\Omega )$ and $g\in C^1(\overline{\Gamma_n})$, see Section \ref{sec:auxiliary-problem} for the weak formulation.  Our aim is to
prove unique continuation properties from the Dirichlet-Neumann junction
$\{P\}= \overline{\G_n}\cap \overline{\G_d}$ and sharp asymptotics of nontrivial solutions
near $P$ provided $\de\O$ is of class $C^{2,\g}$ in a neighborhood of
$P$.
We mention that some regularity results for solutions to
  second-order elliptic problems with mixed Dirichlet--Neumann type
  boundary conditions were obtained in \cite{Kassmann,Savare}, see
  also the references therein.

Some interest  in the derivation of asymptotic expansions for
solutions to planar mixed boundary value problems at Dirichlet-Neumann
junctions arises  in the study of crack problems, see e.g. \cite{dmot,lt}. Indeed, if we
consider an  elliptic
equation in a planar domain with a crack and prescribe Neumann
conditions on the crack and Dirichlet conditions on the rest of the
boundary, in the case of  the crack end-point belonging to the
boundary of the domain we are lead to consider a problem of the type
described above  in a neighborhood of
the crack's tip (which corresponds to the Dirichlet-Neumann
junction). We recall (see
e.g. \cite{dmot})
that, in crack
problems, the coefficients of the asymptotic expansion of solutions near the crack's
tip  are related to the so called \emph{stress intensity factor}.\\

In order to get a precise asymptotic expansion of $u$ at point
$P\in \overline{\G_n}\cap \overline{\G_d}$, we will need to assume that $\de \O$ is of class
$C^{2,\d}$ near $P$. The asymptotic profile of the solution will be
given by the function
\be \label{eq:def-F-k}
F_k(r\cos \th,r\sin\th)=r^{\frac{2k-1}{2}}
\cos\left(\frac{2k-1}{2}\th\right), \quad r>0,\ \th\in (0,\pi),
\ee
 for some $k\in \N\setminus\{0\}$. We note that    $F_{k}\in H^1_{\rm loc}(\R^2)$ and solves the equation
\begin{align}\label{eq:eq-for-F-k}
\begin{cases}
\D F_k =0,& \textrm{in $\R^2_+ $},\\
F_k(x_1,0)=0,& \textrm{ for  $x_1<0$},\\
\de_{x_2} F_k(x_1,0)=0,& \textrm{ for  $x_1>0$},
\end{cases}
\end{align}
where here and in the following $\R^2_+:=\{(x_1,x_2)\in \R^2\,:\, x_2>0\}$.

The main result of the present paper provides an evaluation of the
behavior of weak solutions $u\in H^1(\O)$ to \eqref{eq:22} at the
boundary point where the boundary conditions change.
In order to simplify the statement and without losing generality, we
can fix the cartesian axes in such a way that the following assumptions
on $\Omega\subset\R^2$ are satisfied.  Here and in the remaining of
this paper, $\G_n,\G_d\subset\de\O$ are
nonconstant curves (open as subsets of $\partial\Omega$) such that  
$\overline{\G_n}\cap \overline{\G_d}=\{0\}$ with $0\in\partial\Omega$.
 \begin{enumerate}
 \item[(i)]  The domain $\O$ is of class $C^{2,\d}$ in a neighborhood
   of $0$, for some $\d>0$.
 \item[(ii)] The unit vector
 $e_1:=(1,0)$ is tangent to $\partial \Omega$ at $0$ and pointed towards $\G_n$. Moreover,  the exterior unit normal vector to $\partial\Omega$ at $0$ is $(0,-1)$.
\end{enumerate}

We are now in position to state the main result of the present paper.
\begin{Theorem}\label{t:main-u}
  We assume that $\O$ satisfies the assumption (i)-(ii) above.  Let
  $u\in H^1(\O)$ be a nontrivial weak solution to (\ref{eq:22}), with
  $f\in L^\infty(\Omega )$ and $g\in C^1(\overline{\Gamma_n})$.  Then,
  there exist $k_0\in \N\setminus\{0\}$, $\b \in\R\setminus\{0\} $ and
  $r>0$ such that, for every $\varrho\in (0,1/2)$, there exists $C>0$
  such that
  \be \label{eq:asympt-u-sharp}
 |u(x)- \b F_{k_0}(\vp(x))|\leq C |x|^{
    \frac{2k_0-1}{2}+\varrho}, \quad\textrm{for every
    $x\in \Omega\cap \ov{B_r^+}$}.  
\ee
  Here, the function $\phi: \ov{\O\cap B_{r_0}}\to \ov{\R^2_+}$ is a conformal map  of class $C^2$, for some $r_0>0$ only depending on $ \O$.
\end{Theorem}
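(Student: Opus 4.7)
The plan is to reduce the problem to a cleaner one on a half disk by straightening the boundary via the conformal map $\varphi$, then to run an Almgren-type frequency/blow-up analysis adapted to the mixed Dirichlet-Neumann angular problem, and finally to upgrade the blow-up convergence to the quantitative remainder estimate stated in \eqref{eq:asympt-u-sharp}.

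First I would set $v:=u\circ\varphi^{-1}$ on a neighborhood $\Omega^+$ of $0$ in $\ov{\R^2_+}$. Since $\varphi$ is conformal and of class $C^2$, the equation for $v$ takes the form $-\Delta v=|\det D\varphi^{-1}|\,(f\circ\varphi^{-1})\,v=:\tilde f v$ in $\Omega^+$, with $v=0$ on the negative $x_1$-axis near $0$ and $\partial_{x_2}v=-\tilde g v$ on the positive $x_1$-axis near $0$, where $\tilde g\in C^1$ extends smoothly up to $0$. Conformality preserves the Dirichlet energy in two dimensions, so this reduction is geometrically faithful; the boundary is now flat and the Dirichlet-Neumann junction sits at the origin. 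Because $\varphi$ is $C^2$ and $\varphi(0)=0$ with $D\varphi(0)$ a rotation, $|\varphi(x)|$ is comparable to $|x|$, so it suffices to prove the corresponding expansion $|v(y)-\beta F_{k_0}(y)|\le C|y|^{(2k_0-1)/2+\varrho}$ for $y=\varphi(x)$.

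Next I would introduce, on half-balls $B_r^+$, the height function $H(r):=\frac{1}{r}\int_{S_r^+}v^2\,d\sigma$ and the Dirichlet-type quantity $D(r):=\int_{B_r^+}(|\nabla v|^2-\tilde f v^2)\,dx-\int_{\Gamma_r}\tilde g\,v^2\,d\sigma$ where $\Gamma_r$ is the Neumann portion of the boundary, and the Almgren frequency $\mathcal N(r):=rD(r)/H(r)$ (this follows the scheme of \cite{FFT,FFT2,FFT3,FF}, here adapted to the mixed conditions). Differentiating $H$ and $D$, using Pohozaev-type identities that incorporate both boundary conditions, and absorbing the lower-order terms coming from $\tilde f$, $\tilde g$ and the conformal error, I expect to obtain $(\mathcal N+\text{const})'\ge -(\text{integrable remainder})$, hence $\lim_{r\to 0^+}\mathcal N(r)=:\gamma$ exists and is finite. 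The crucial algebraic input is the eigenvalue problem on the half-circle: $-\psi''=\mu\psi$ on $(0,\pi)$ with $\psi(\pi)=0$ and $\psi'(0)=0$, whose eigenvalues are $\mu_k=\bigl(\tfrac{2k-1}{2}\bigr)^2$ with eigenfunctions $\cos\bigl(\tfrac{2k-1}{2}\theta\bigr)$. A standard argument (classifying homogeneous harmonic solutions of \eqref{eq:eq-for-F-k}) forces $\gamma=\tfrac{2k_0-1}{2}$ for some $k_0\in\N\setminus\{0\}$.

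Having the frequency limit, I would perform the blow-up: define $v_\lambda(y):=v(\lambda y)/\sqrt{H(\lambda)}$, observe $\|v_\lambda\|_{L^2(S_1^+)}=1$, use uniform $H^1$ bounds (which follow from $\mathcal N$ being bounded) and the doubling property $H(2\lambda)\le C H(\lambda)$ to extract a subsequential limit $\Psi$ that is harmonic on $\R^2_+$, vanishes on $\{x_1<0\}$, has vanishing normal derivative on $\{x_1>0\}$, and is $\gamma$-homogeneous. The classification then gives $\Psi=\beta\,|y|^{(2k_0-1)/2}\cos\bigl(\tfrac{2k_0-1}{2}\theta\bigr)/\|\cdot\|_{L^2(S_1^+)}$. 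Uniqueness of $\beta$ (independence of the subsequence) is obtained by expanding $v$ in the Fourier basis of the angular eigenfunctions and analyzing the ODE system satisfied by the Fourier coefficients $\varphi_k(r):=\int_0^\pi v(r\cos\theta,r\sin\theta)\cos\bigl(\tfrac{2k-1}{2}\theta\bigr)\,d\theta$; for the leading mode $k_0$ one shows $\varphi_{k_0}(r)=\beta\, r^{(2k_0-1)/2}+O(r^{(2k_0-1)/2+\varrho})$, while for $k\neq k_0$ one obtains $|\varphi_k(r)|\le Cr^{(2k_0-1)/2+\varrho}$ provided the remainder from $\tilde f v$ and the Neumann term $\tilde g v$ is integrated against $r^{-(2k-1)/2}$ and controlled by the $L^\infty$ bounds on $\tilde f,\tilde g$ and Cauchy-Schwarz on $S_r^+$.

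The hard part is the last step: deducing the pointwise remainder bound \eqref{eq:asympt-u-sharp} from the Fourier/energy estimates. The strategy is to sum the Fourier series and use elliptic regularity (or a Moser/De Giorgi iteration adapted to the mixed boundary conditions, in the spirit of \cite{Kassmann,Savare}) to pass from an $L^2(S_r^+)$ estimate on $v-\beta F_{k_0}$ of order $r^{(2k_0-1)/2+\varrho}$ to an $L^\infty(B_r^+\setminus B_{r/2}^+)$ bound of the same order; then a dyadic covering of $B_r^+$ yields the stated pointwise inequality. The main technical obstacle throughout is controlling all remainders — those coming from $\tilde f v$, from the Neumann perturbation $\tilde g v$, and from the conformal distortion — uniformly enough to both close the monotonicity argument and still leave room for any $\varrho<1/2$ in the exponent; this is exactly where the hypothesis $f\in L^\infty$, $g\in C^1$, and $\partial\Omega\in C^{2,\delta}$ are used.
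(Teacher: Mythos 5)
Your proposal matches the paper's structure for the first several steps: the conformal straightening $w=u\circ\varphi^{-1}$, the mixed Dirichlet--Neumann Almgren frequency $\mathcal N=D/H$ with Pohozaev-type identities to close the monotonicity estimate, the identification $\gamma=\lim\mathcal N=\frac{2k_0-1}{2}$ via the angular eigenvalue problem $-\psi''=\mu\psi$, $\psi'(0)=0$, $\psi(\pi)=0$, the blow-up extraction of the homogeneous profile, and the Fourier/ODE argument showing $\beta$ is independent of the subsequence. All of that is essentially what the paper does in Sections 2--5.

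The genuine gap is in your final step, the passage from the blow-up limit $\tau^{-\gamma}w(\tau\cdot)\to\beta F_{k_0}$ (which is only an $o(1)$ statement) to the quantitative remainder $|w-\beta F_{k_0}|\le C|y|^{\gamma+\varrho}$. You propose to (a) bound each Fourier coefficient $\varphi_k(r)$ for $k\ne k_0$ by $Cr^{\gamma+\varrho}$ by integrating the ODE with source, and then (b) sum the Fourier series and use elliptic regularity to pass from $L^2(S_r^+)$ to $L^\infty$. Step (a) does give a decay rate for each fixed $k$, but the constants depend on $k$ (through the source integrals and the integration constants) and there is no control on their summability; the crude bound from $H(r)=O(r^{2\gamma})$ yields only $\sum_k\varphi_k^2(r)=O(r^{2\gamma})$, with no $\varrho$-gain, so step (b) cannot start. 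What the paper actually does (Section 6) is substitute for your step (b) a compactness/contradiction blow-up argument in the spirit of Serra \cite{Serra}: one defines the $L^2(B_r^+)$-projection $F_{k_0,r}^w$ of $w$ onto $\cS_{k_0}=\mathrm{span}\{F_1,\dots,F_{k_0}\}$, assumes for contradiction that $\sup_r r^{-\gamma-\varrho}\|w-F_{k_0,r}^w\|_{L^\infty(B_r^+)}=\infty$, rescales along a suitable sequence, and after a delicate telescoping (``claim'' \eqref{eq:FRF}) plus the H\"older estimate of Proposition \ref{eq:reg-estim} and the Liouville-type Lemma \ref{lem:liouville}, derives a contradiction with the orthogonality \eqref{eq:contradict-3}. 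This compactness scheme is what converts the qualitative blow-up convergence into the rate $r^{\gamma+\varrho}$; it does not rely on summing Fourier coefficient bounds. Your sketch as written does not supply a workable replacement for this step, so the proof is incomplete at precisely the point you flagged as ``hard.''

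A smaller point: Proposition \ref{eq:reg-estim}, which provides the uniform $C^{1/2-\epsilon}$ bound used in the compactness argument, is itself nontrivial; the paper proves it by decomposing $v=v_1+v_2+V$ into an interior-source piece, a piece obtained as harmonic extension of a half-Laplacian solution on the Neumann segment (using the Caffarelli--Silvestre Poisson kernel and \cite{ROS2014}), and a harmonic residual handled via \cite{Savare}. ``Moser/De Giorgi adapted to mixed conditions'' would need to reach the borderline exponent $1/2-\epsilon$ uniformly in the scaling, which is the technical point your outline leaves open.
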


  \begin{remark}\label{rem:conf}
Here and in the sequel, we identify $\R^2$ with the
  complex plane $\C$; hence, by a \emph{conformal map} on an open set
  $U\subset\R^2$ we mean a holomorphic function with complex
  derivative everywhere non-zero on $U$. We notice that, if $\Omega$
  satisfies (i)-(ii) and
  $\phi: \ov{\O\cap B_{r_0}}\to \ov{\R^2_+}$ is conformal, then
  $D\varphi(0)=\alpha \mathop{\rm Id}$ and $\varphi'(0)=\alpha$ for
  some real $\alpha>0$, where $D\varphi$ denotes the jacobian matrix
  of $\varphi$ and  $\varphi'$ denotes the complex
  derivative of $\varphi$.
\end{remark}


As a direct consequence of Theorem \ref{t:main-u}, we derive the
following  Hopf-type lemma.
\begin{Corollary}\label{c:upper_bound}
 Under the same assumptions as in Theorem \ref{t:main-u},
   let $u\in H^1(\O)$ be a non-trivial weak solution to
  (\ref{eq:22}), with  $u\geq 0$. Then
    \begin{enumerate}[\rm (i)]
    \item for every $t\in[0,\pi)$,
\[
\lim_{r\to 0}\frac{u(r\cos t,r\sin
  t)}{r^{1/2}}=\beta\alpha^{1/2}\cos\left(\frac t2\right)>0,
\]
where $\alpha=\varphi'(0)>0$ and $\varphi$ is as in Theorem \ref{t:main-u};
\item  for every   cone $\cC\subset \R ^2$ satisfying $(1,0)\in \cC$ and $(-1,0)\in \R^2\setminus \ov{\cC}$, we have
\be
{\mathop{\liminf}_{\substack{x\to 0\\x\in \ov{\O}\cap \cC}}} \frac{u(x)}{|x|^{1/2}}>0.
\ee
    \end{enumerate}
\end{Corollary}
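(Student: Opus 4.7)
The strategy is to plug the sign condition $u\geq 0$ into the asymptotic expansion of Theorem \ref{t:main-u} in order to pin down $k_0=1$ and $\beta>0$, and then read off both (i) and (ii) from the leading-order term using the first-order Taylor behaviour of the conformal map $\varphi$ at $0$.

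\emph{Step 1: $k_0=1$ and $\beta>0$.} Theorem \ref{t:main-u} yields $k_0\in\N\setminus\{0\}$, $\beta\in\R\setminus\{0\}$, and $r>0$ with
\[
u(x)=\beta F_{k_0}(\varphi(x))+R(x),\qquad |R(x)|\leq C|x|^{(2k_0-1)/2+\varrho},
\]
on $\Omega\cap\overline{B_r^+}$. Writing $\varphi(x)=|\varphi(x)|e^{i\theta(x)}$ with $\theta(x)\in[0,\pi]$, one has $F_{k_0}(\varphi(x))=|\varphi(x)|^{(2k_0-1)/2}\cos((2k_0-1)\theta(x)/2)$. If $k_0\geq 2$, the cosine factor changes sign on $(0,\pi)$; choose $\theta^*\in(0,\pi)$ so that $\beta\cos((2k_0-1)\theta^*/2)<0$ and set $x_n:=\varphi^{-1}(r_n e^{i\theta^*})\in\Omega$ with $r_n\to 0^+$. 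Since $\varphi^{-1}$ is bi-Lipschitz at $0$ (Remark \ref{rem:conf}), $|x_n|\sim r_n/\alpha$, and the expansion gives
\[
u(x_n)=r_n^{(2k_0-1)/2}\bigl[\beta\cos((2k_0-1)\theta^*/2)+O(r_n^\varrho)\bigr]<0
\]
for $n$ large, contradicting $u\geq 0$. Hence $k_0=1$. Since $F_1(\varphi(x))>0$ whenever $\theta(x)\in[0,\pi)$, the analogous sign test with a $\theta^*\in(0,\pi)$ for which $F_1$ is strictly positive rules out $\beta<0$.

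\emph{Step 2: proof of (i).} Fix $t\in[0,\pi)$ and put $x_r=(r\cos t,r\sin t)$; this lies in $\Omega$ for all sufficiently small $r$ when $t\in(0,\pi)$ (since $\partial\Omega$ is tangent to $e_1$ at $0$), and the endpoint $t=0$ follows from the continuity of the trace of $u$ on $\Gamma_n$. By Remark \ref{rem:conf}, $D\varphi(0)=\alpha\,\mathop{\rm Id}$ with $\alpha=\varphi'(0)>0$, and $\varphi\in C^2$ gives $\varphi(x_r)=\alpha r e^{it}+O(r^2)$. Hence $|\varphi(x_r)|/r\to\alpha$ and $\theta(x_r)\to t$, whence
\[
\frac{F_1(\varphi(x_r))}{r^{1/2}}=\Big(\frac{|\varphi(x_r)|}{r}\Big)^{\!1/2}\cos(\theta(x_r)/2)\longrightarrow\alpha^{1/2}\cos(t/2),
\]
while $|R(x_r)|/r^{1/2}=O(r^\varrho)\to 0$. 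This yields $u(x_r)/r^{1/2}\to\beta\alpha^{1/2}\cos(t/2)$, which is strictly positive since $t/2\in[0,\pi/2)$ and $\alpha,\beta>0$.

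\emph{Step 3: proof of (ii).} The hypothesis $(-1,0)\in\R^2\setminus\overline{\cC}$ provides $\eta>0$ such that every unit vector in $\cC$ has angular distance at least $\eta$ from $(-1,0)$. Because $\partial\Omega$ is $C^{2,\delta}$ with tangent $e_1$ at $0$, points of $\overline{\Omega}$ near $0$ satisfy $\arg(x)\in(-O(|x|),\pi+O(|x|))$; combined with the cone condition, for $x\in\overline{\Omega}\cap\cC$ with $|x|$ small one has $\arg(x)\leq\pi-\eta/2$. The expansion $\varphi(x)=\alpha x+O(|x|^2)$ then gives $\theta(x)\leq\pi-\eta/3$ and $|\varphi(x)|\geq(\alpha/2)|x|$ for $|x|$ small enough, so
\[
F_1(\varphi(x))\geq c_0|x|^{1/2},\qquad c_0:=(\alpha/2)^{1/2}\cos((\pi-\eta/3)/2)>0.
\]
Inserting this into the expansion and absorbing the $O(|x|^{1/2+\varrho})$ remainder yields $u(x)/|x|^{1/2}\geq\tfrac12\beta c_0>0$ for $x\in\overline{\Omega}\cap\cC$ sufficiently close to $0$, establishing the liminf bound. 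The principal delicacy is Step 1: one must be sure the pointwise estimate of Theorem \ref{t:main-u} is valid along the sequences $\varphi^{-1}(r_ne^{i\theta^*})$ even when $\theta^*$ is close to the Dirichlet or Neumann side of the boundary, so that the sign-change test on the leading profile is legitimate; everything else reduces to elementary estimates driven by the Taylor expansion of the conformal map $\varphi$ at the origin.
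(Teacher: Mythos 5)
Your proof is correct and follows essentially the same path as the paper's: read off $k_0=1$ and $\beta>0$ from $u\geq 0$ and the leading asymptotic term, then derive (i) and (ii) from the expansion. The only stylistic difference is that the paper invokes the Fermi-coordinate version of the expansion from the remark following the proof of Theorem~\ref{t:main-u} (yielding the pointwise radial limit directly), whereas you re-derive that same limit by Taylor-expanding the conformal map $\varphi$ at $0$ using Remark~\ref{rem:conf}; the "principal delicacy" you flag is not actually an issue, since the pointwise estimate of Theorem~\ref{t:main-u} is uniform over $\Omega$ near the origin.
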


A further relevant byproduct of our asymptotic analysis is the
following unique continuation principle, whose proof follows
directly from Theorem \ref{t:main-u}.

\begin{Corollary}\label{c:unique_continuation}
Under the same assumptions as in Theorem \ref{t:main-u}, let $u\in H^1(\O)$ be a weak solution to
  (\ref{eq:22}) such that $u(x)=O(|x|^n)$ as $x\in\O$,
  $|x|\to 0$, for any $n\in \N$. Then $u\equiv 0$.
\end{Corollary}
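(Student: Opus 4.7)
The plan is to argue by contradiction and derive a direct conflict between the infinite-order vanishing hypothesis and the sharp asymptotic profile supplied by Theorem \ref{t:main-u}.

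First, suppose that $u\not\equiv 0$. Then Theorem \ref{t:main-u} applies and produces $k_0\in\N\setminus\{0\}$, $\b\in\R\setminus\{0\}$, $r>0$ and, for each $\varrho\in(0,1/2)$, a constant $C>0$ such that
\be\label{eq:plan-sharp}
|u(x)-\b F_{k_0}(\vp(x))|\le C|x|^{\frac{2k_0-1}{2}+\varrho}\qquad\text{for all }x\in\Omega\cap\ov{B_r^+}.
\ee
Next, I would pick $n\in\N$ with $n>\frac{2k_0-1}{2}$ (which is possible since $k_0$ is fixed). The assumption $u(x)=O(|x|^n)$ combined with the triangle inequality and \eqref{eq:plan-sharp} then yields
\[
|\b F_{k_0}(\vp(x))|\le C_n|x|^n+C|x|^{\frac{2k_0-1}{2}+\varrho}\le C'|x|^{\frac{2k_0-1}{2}+\varrho}\quad\text{for }x\in\Omega\cap\ov{B_r^+},\ |x|\text{ small.}
\]
Since $\b\neq 0$, this is equivalent to $F_{k_0}(\vp(x))=o(|x|^{\frac{2k_0-1}{2}})$ as $x\to 0$ in $\Omega$.

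The last step is to show that this estimate is incompatible with the explicit homogeneous structure of $F_{k_0}$, thereby producing the contradiction. From Remark \ref{rem:conf}, the conformal map $\varphi$ satisfies $\varphi'(0)=\alpha>0$ and hence $\vp(x)=\alpha x+O(|x|^2)$ near $0$; in particular $|\vp(x)|/|x|\to\alpha$ and $\arg(\vp(x))\to\arg(x)$ as $x\to 0$. Using the explicit formula \eqref{eq:def-F-k}, one gets
\[
\frac{F_{k_0}(\vp(x))}{|x|^{\frac{2k_0-1}{2}}}\longrightarrow \alpha^{\frac{2k_0-1}{2}}\cos\!\left(\tfrac{2k_0-1}{2}\th\right)\quad\text{as }x=(r\cos\th,r\sin\th)\to 0,
\]
which is nonzero for any $\th\in(0,\pi)$ with $\cos\!\big(\tfrac{2k_0-1}{2}\th\big)\neq 0$ (such $\th$ certainly exist, e.g.\ $\th=\pi/2$ if $k_0$ is such that $\cos\!\big(\tfrac{(2k_0-1)\pi}{4}\big)\neq 0$, otherwise any other admissible angle); evaluating along a sequence $x_j\to 0$ in $\Omega$ along such a direction contradicts $F_{k_0}(\vp(x))=o(|x|^{\frac{2k_0-1}{2}})$, hence $u\equiv 0$ in a neighborhood of $0$.

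The only subtlety is the final geometric verification that, within the admissible domain $\Omega\cap B_r^+$, one can actually choose an approach direction along which the angular factor of $F_{k_0}\circ\vp$ does not vanish; this is essentially a finiteness argument on the zero set of $\cos\!\big(\tfrac{2k_0-1}{2}\th\big)$ on $(0,\pi)$ and causes no real difficulty. Finally, to upgrade the conclusion from \emph{$u\equiv 0$ in a neighborhood of $0$} to \emph{$u\equiv 0$ in $\Omega$}, one would invoke a standard interior unique continuation principle (for instance, the classical result of Garofalo--Lin \cite{GL} applied to $-\Delta u=fu$ with $f\in L^\infty$), which extends the vanishing from the open neighborhood to the whole connected domain $\Omega$.
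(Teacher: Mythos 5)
Your proof is correct and follows essentially the same contradiction argument as the paper: assume $u\not\equiv 0$, invoke Theorem \ref{t:main-u} to extract a $k_0$ and $\beta\neq 0$, and observe that the resulting profile of order $|x|^{(2k_0-1)/2}$ is incompatible with $u(x)=O(|x|^n)$ once $n>\frac{2k_0-1}{2}$. The only difference is cosmetic: the paper uses the already-established polar limit \eqref{eq:30} from \eqref{eq:asympt-u-gen-cor}, whereas you re-derive it directly from \eqref{eq:asympt-u-sharp} and the expansion $\varphi(x)=\alpha x+O(|x|^2)$; also, your concluding appeal to Garofalo--Lin is unnecessary, since the contradiction refutes the global assumption $u\not\equiv 0$ in $\Omega$ outright, so no separate propagation step is needed.
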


We observe that 
Theorem \ref{t:main-u} provides a sharp asymptotic expansion (and
consequently a unique continuation principle) at the boundary for
$\frac12$-fractional elliptic equations in dimension 1. Indeed, if $v\in
H^{1/2}(\R)$ weakly solves 
\[
\begin{cases}
(-\Delta)^{1/2}v=g(x)v,&\text{in }(0,R),\\
v=0,&\text{in }\R\setminus(0,R),
\end{cases}
\]
for some $g\in C^1([0,R])$, then its harmonic extension $V\in H^1_{\rm loc}(\overline{\R^2_+})$ weakly solves 
\begin{equation}\label{eq:32}
\begin{cases}
-\Delta V=0,&\text{in }\R^2_+,\\
\partial_\nu V=g(x)V,&\text{on }(0,R)\times\{0\},\\
V=0,&\text{on }(\R\setminus(0,R))\times\{0\},
\end{cases}
\end{equation}
 see \cite{cs}.
Theorem \ref{t:main-u} and Corollary \ref{c:unique_continuation}
apply
 to \eqref{eq:32}. Hence,  $V$
(and in particular  its restriction $v$) satisfies  expansion \eqref{eq:asympt-u-sharp}
 and a strong unique
 continuation principle from  $0$ (i.e. from
 a boundary point of the domain of $v$). We mention that unique
 continuation principles from interior points for fractional elliptic
 equations were established in \cite{FallFelli}. 

We do not know if  the $C^{2,\d}$ regularity on $\O$ and $C^1$
regularity of the boundary potential $g$ in Theorem \ref{t:main-u}
can be weakened in order to obtain a unique continuation property.
On the other hand, we can conclude that  a regularity assumption on the boundary is
crucial for excluding the presence of
logarithms in the asymptotic expansion at the junction.
Indeed, in Section \ref{sec:examples} we produce an example
of a harmonic function on a domain with a $C^1$-boundary which is not of
class $C^{2,\delta}$,
satisfying null Dirichlet boundary conditions on a portion of the
boundary and  null Neumann boundary conditions on the other portion, but exhibiting dominant
logarithmic terms in its asymptotic expansion.

The proof of Theorem \ref{t:main-u} combines the use of an Almgren
type monotonicity formula, blow-up analysis and sharp regularity
estimates. Indeed regularity estimates yield the expansion of $u$
near zero as follows:
\begin{equation}\label{eq:Taylor-u-intro}
\left\|u-\sum_{k=1}^{k_0} a_k(r) F_k\circ\vp \right\|_{L^\infty(B_r)}
\leq C r^{\frac{2k_0-1}{2}+\varrho},
\end{equation}
 for every $\varrho\in (0,1/2)$,
for some $C>0$, $k_0\geq 1$ and where
$a_k=\frac{\la u,F_k\circ\vp \ra_{L^2(B_r)}}{\|F_j\circ\vp
  \|_{L^2(B_r) }^2}$.
Now, if $u$ is nontrivial, a blow-up analysis combined with Almgren type
monotonicity formula allows to depict a $k_0\geq 1$ for which
$ a_{k_0}(r)\to \b\not=0$ and $a_k(r)\to 0$ for every $k< k_0$ as $r\to0$.
The proof of \eqref{eq:Taylor-u-intro} uses also a blow-up analysis
argument inspired by   Serra \cite{Serra}, see also \cite{RS2,RS3}.

The paper is organized as follows. In Section
\ref{sec:auxiliary-problem} we introduce an auxiliary
equivalent problem  obtained by a conformal
diffeomorphic deformation straightening $B_1\cap\partial\Omega$ near
$0$ and state Theorem \ref{t:asym} giving the sharp asymptotic
behaviour of its solutions. Section \ref{sec:hardy-poincare-type}
contains some Hardy-Poincar\'e type inequalities for $H^1$-functions
vanishing on a portion of the boundary of half-balls. In Section
\ref{sec:monotonicity-formula} we develop an Almgren type monotonicity
formula for the auxiliary problem which yields good energy estimates
for rescaled solutions thus allowing the fine blow-up analysis
performed in Section  \ref{sec:blow-up-analysis} and hence the proof
of Theorem \ref{t:asym}. Section \ref{sec:asymptotics-u} contains the
proof of the main Theorem \ref{t:main-u}, which is based on Theorem
\ref{t:asym} and on some regularity and
  approximation results established in Section
  \ref{sec:some-regul}. Finally, Section
 \ref{sec:examples} is devoted to the construction of an example of a
 solution with logarithmic dominant term in a domain violating the
 $C^{2,\delta}$-regularity assumption.

\section{The auxiliary problem}\label{sec:auxiliary-problem}
For every $R>0$  let $B_R=\{(x_1,x_2)\in\R^2:x_1^2+x_2^2<R^2\}$ and
$B_R^+=\{(x_1,x_2)\in B_R:x_2>0\}$.  Since $\de \O$ is of class
$C^{2,\d}$ near zero, we can find $r_0>0$ such that
$\G:=\de \O\cap B_{ r_0}$ is a $C^{2,\d}$ curve.  Here and in the
following, we let $\cB$ be a $C^{2,\d} $ simply connected open bounded set such
that $ \cB\subset \O$ and $\partial\mathcal B\cap\partial\Omega=\Gamma$.
For some  functions
\begin{equation}\label{eq:27}
 f\in
L^\infty(  \cB) \quad\text{and}\quad g\in C^1(\overline{\Gamma_n}),
\end{equation}
let $u\in
H^1( \cB)$ be  a solution to
\begin{equation}\label{eq:pb}
  \begin{cases}
    -\Delta u= f(x)u,&\text{in }\mathcal  \cB,\\
\partial_\nu u=g(x)u,&\text{on }\Gamma_n ,\\
 u=0,&\text{on }\Gamma_d .
  \end{cases}
\end{equation}
We introduce the space $H^1_{0,\Gamma_d}(\mathcal
B)$ as the closure in $H^1(\mathcal B)$ of the subspace
$$  C^\infty_{0,\Gamma_d}(\mathcal
B):=  \{u\in C^\infty(\overline{\mathcal B}):u=0\text{ on }\Gamma_d\cap \de \cB\}.$$ 
We say that $u\in H^1(\mathcal B)$ is a weak solution to
\eqref{eq:pb} if
\begin{equation*}
\begin{cases}
u\in H^1_{0,\Gamma_d}(\mathcal B),\\[8pt]
 {\displaystyle{\int_{\mathcal B} \nabla u(x) \nabla v(x) \,
dx=\int_{\mathcal B} f(x)u(x)v(x)\, dx+\!\!\int_{\Gamma_n}
guv \, ds }}\quad \text{for any $v\in
C^\infty_{0,\partial\mathcal B\setminus\Gamma_n}(\mathcal
B) $}
\end{cases}
\end{equation*}
where
$C^\infty_{0,\partial\mathcal B\setminus\Gamma_n}(\mathcal
B)=
\{u\in C^\infty(\overline{\mathcal B}):u=0\text{ on
}\partial\mathcal B\setminus\Gamma_n\}$.
Since $\cB$ is of class $C^{2,\d}$, in view of the Riemann mapping Theorem and \cite[Theorem
5.2.4]{Krantz}, there exists a  conformal map  $\hat \vp :\ov{ \cB}
\to  \ov B_1$
which is of class $C^2$. Let $N=\hat\vp(0)\in \de B_1$ and let $S$ be its antipodal. We then consider the map
$\ti \vp : \R^2\setminus\{S\} \to \R^2\setminus\{\overline S\}$ given by
$\ti\vp (z):= 2\frac{\overline{z-S}}{|z-S|^2}+\overline{S}$, where,
for every $z\in\R^2\simeq\C$, $\overline{z}$ denotes the complex
conjugate of $z$. This map is conformal and $\ti\vp(N)=0$. In
addition $\ti \vp( \ov {B_1} \setminus\{S\}) \subset \ov{ \cP}$
where $\mathcal P$ is the half plane not containing $\overline S$ whose boundary is the line passing through the origin orthogonal to $\overline S$.

Then the map $ \ti \vp \circ \hat \vp$ is a conformal
map which is of class $C^2$ from  a neighborhood of the origin
$\overline{{\mathcal B}\cap B_r}$ into $\ov{ \cP}$ for some $r>0$.  It is now clear that there
exists a rotation $\cR$ and a real number $R>0$ such that, letting
$\cU_R:=\vp^{-1}(B_R^+)$, the map
$\vp:=\cR \circ \ti \vp \circ \hat \vp : \ov{\cU_R}\to \ov{B_R^+}$ is
an  invertible
conformal map of class $C^2$  with inverse
  $\vp^{-1}:\ov{B_R^+}\to \ov{\cU_R}$ of class $C^2$. Moreover $\varphi(0)=0$.

 Since $\vp$ is a conformal diffeomorphism, in view of Remark \ref{rem:conf} we have that, under the
  assumptions of Theorem \ref{t:main-u},
\begin{equation}\label{eq:20}
\mathop D\phi(0)
=\alpha
\mathop{\rm
    Id},\quad\text{with }{\alpha= \phi'(0)>0},
\end{equation}
being $\phi'(0)$ the complex derivative of $\phi$ at $0$, which turns
out to be real because of the assumption that $(1,0)$ is tangent to
$\partial\Omega$ at $0$ and strictly positive because of the
assumption that  the exterior unit normal vector to $\partial\Omega$ at
  $0$ is $(0,-1)$.
 In addition, \eqref{eq:20} implies that, if $R$ is
   chosen sufficiently small,
$\vp ^{-1}((-R,0) \times\{0\})\subset\G_d$ and
$\vp ^{-1}((0,R)\times\{0\})\subset \G_n$.

Therefore letting  $w=u\circ \phi^{-1}: B_R^+ \to  \R $ and $\Psi:= \vp^{-1}$,
we then have that $w\in H^1(B_R^+)$ solves
\begin{equation}\label{eq:5}
\begin{cases}
  -\Delta   w(z) =p(z)w(z) ,&\text{in }B_R^+,\\
\partial_\nu w(x_1,0) =q(x_1) w(x_1,0) ,& x_1\in (0,R),\\
 w=0,&\text{on }(-R,0)\times\{0\} ,
  \end{cases}
\end{equation}
with
\[
p(z)=|\Psi'(z)|^2f(\Psi(z)) ,\qquad \qquad
q(x_1)=(g(\Psi(x_1,0))|{\Psi'(x_1,0)}|.
\]
It is plain that $ p\in L^\infty(B_R^+)$ and $q \in C^1([0,R))$.
Here and in the following, for every $r>0 $, we define
\be\label{eq:def:Gr-Gd}
\G_n^r:=   (0,r)\times\{0\}\qquad\textrm{ and } \qquad   \G_d^r:=(-r,0)\times\{0\}.
\ee

The following theorem describes the behaviour of $w$ at $0$ in
terms of the limit of the Almgren quotient associated to $w$,
which is defined as
\[
\mathcal N(r)=\frac{ \int_{B_r^+} |\nabla w|^2dz
  -
\int_{B_r^+} pw^2dz
 -\int_0^r q(x)w^2(x,0)\,dx
}{\int_0^\pi w^2(r\cos t,r\sin t)\,dt}.
\]
In Section \ref{sec:monotonicity-formula} we will prove that $\mathcal
N$ is well defined in the interval $(0,R_0)$ for some $R_0>0$.

\begin{Theorem} \label{t:asym}
Let $w$ be a nontrivial  solution to \eqref{eq:5}. Then there
exists $k_0\in \N$, $k_0\geq1$, such that
\begin{equation}\label{eq:35}
\lim_{r\to 0^+}\mathcal N(r)=\frac{2k_0-1}2.
\end{equation}
Furthermore
\begin{equation*}
\tau^{-\frac{2k_0-1}2} w(\tau z)\to \beta |z|^{\frac{2k_0-1}2}
\cos\bigg(\tfrac{2k_0-1}{2}\mathop{\rm Arg} z\bigg)\quad\text{as
}\tau\to 0^+
\end{equation*}
 strongly in $H^1(B_r^+)$ for all $r>0$  and in  $C^{0,\mu}_{\rm
    loc}(\overline{\R^2_+}\setminus\{0\})$  for every $\mu\in
  (0,1)$,
where $\beta\neq0$ and
\begin{align}\label{eq:21}
    \beta=&
\frac2\pi\int_{0}^\pi R^{-\frac{2k_0-1}{2}} w(R\cos s,R\sin s)
           \cos\left(\tfrac{2k_0-1}{2}s\right)\,ds\\
\notag& +\frac2\pi
\!\int_0^\pi\!\bigg[\int_0^R\tfrac{t^{-k_0+3/2}-R^{1-2k_0}
t^{k_0+1/2}}{2{k_0}-1} p(t\cos s,t\sin s) w(t\cos s,t\sin s)
\,dt \bigg]\cos\left(\tfrac{2k_0-1}{2}s\right)\,ds\\
\notag& +\frac2\pi\int_0^R\frac{t^{1/2-k_0}-R^{1-2k_0}
t^{k_0-1/2}}{2{k_0}-1} q(t) w(t,0)\,dt.
\end{align}
 In particular
\begin{equation}\label{eq:shar-asym-w-tau}
\tau^{-\frac{2k_0-1}2}w(\tau\cos t,\tau\sin t)\to \beta
\cos\left(\tfrac{2k_0-1}{2}t\right)\quad \text{in }
C^{0,\mu}([0,\pi])  \quad \text{as }\tau\to 0^+.
\end{equation}
\end{Theorem}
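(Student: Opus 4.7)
The plan is to deduce Theorem \ref{t:asym} by a classical Almgren-monotonicity plus blow-up strategy, adapted to the mixed Dirichlet/Neumann setting. First, using the monotonicity formula for $\mathcal N$ to be established in Section \ref{sec:monotonicity-formula}, I would show that $\gamma := \lim_{r\to 0^+}\mathcal N(r)$ exists in $[0,+\infty)$. Setting
\[
H(r) := \frac{1}{r}\int_0^\pi w^2(r\cos t, r\sin t)\,dt,
\]
differentiation combined with a Gronwall-type argument yields two-sided bounds of the form $C_1 r^{2\gamma}\le H(r)\le C_2 r^{2\gamma-\varepsilon}$ for $r$ small, providing the energy estimates needed for the rescaled family $w^\tau(z):=w(\tau z)/\sqrt{H(\tau)}$.

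Next, I would carry out the blow-up. These bounds make $\{w^\tau\}_{\tau>0}$ bounded in $H^1(B_r^+)$ for any fixed $r$, and standard boundary regularity (together with the Hardy--Poincar\'e type inequalities of Section \ref{sec:hardy-poincare-type}) promotes weak convergence to strong $H^1(B_r^+)$ and $C^{0,\mu}_{\mathrm{loc}}(\overline{\R^2_+}\setminus\{0\})$ convergence along sequences $\tau_n\to0^+$. Since $p\in L^\infty$ and $q\in C^1$, the rescaled lower-order terms carry positive powers of $\tau$ and vanish in the limit, so any accumulation point $\widetilde w$ satisfies
\[
-\Delta \widetilde w=0 \text{ in } \R^2_+, \quad \partial_\nu \widetilde w=0 \text{ on } (0,\infty)\times\{0\}, \quad \widetilde w=0 \text{ on } (-\infty,0)\times\{0\},
\]
with $\int_0^\pi \widetilde w^2(\cos t,\sin t)\,dt=1$, and is homogeneous of degree $\gamma$. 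Separating variables, the angular profile $\psi(t)$ solves $-\psi''=\gamma^2\psi$ on $(0,\pi)$ with $\psi'(0)=0$ and $\psi(\pi)=0$, whence the only admissible values are $\gamma=(2k_0-1)/2$ for some $k_0\in\N\setminus\{0\}$, with eigenfunction proportional to $\cos((2k_0-1)t/2)$. This proves \eqref{eq:35}.

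To upgrade to convergence of the full family and to derive the explicit formula \eqref{eq:21} for $\beta$, I would use the Fourier decomposition of $w$ with respect to the orthonormal basis $\{\sqrt{2/\pi}\cos((2k-1)s/2)\}_{k\ge 1}$ of $L^2(0,\pi)$. Defining
\[
\phi_k(r):=\int_0^\pi w(r\cos s,r\sin s)\cos\!\Bigl(\tfrac{2k-1}{2}s\Bigr)\,ds,
\]
and testing \eqref{eq:5} against the angular eigenfunction (the boundary conditions on $\G_n$ and $\G_d$ are designed exactly so that the boundary terms from integration by parts match $\psi'(0)=0$ and $\psi(\pi)=0$), each $\phi_k$ satisfies an Euler equation with characteristic exponents $\pm(2k-1)/2$ and a forcing term linear in $pw$ and $qw(\cdot,0)$. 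Variation of parameters produces a closed-form representation for $\phi_k(r)$; the coefficient of the singular homogeneous solution $r^{-(2k-1)/2}$ is forced to vanish because $w\in H^1$. Dividing by $r^{(2k-1)/2}$, sending $r\to 0^+$, and matching against the blow-up limit identify the limit of $\phi_{k_0}(r)/r^{(2k_0-1)/2}$ as $(\pi/2)\beta$, with $\beta$ as in \eqref{eq:21}, and show $\phi_k(r)=o(r^{(2k_0-1)/2})$ for $k<k_0$. Since this $\beta$ is intrinsic to $w$, convergence of the full family follows, and $\beta\ne 0$ from the normalization of the blow-up limit.

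The principal obstacle will be controlling the singular integrals in the variation-of-parameters formula and proving that $\phi_{k_0}(r)/r^{(2k_0-1)/2}$ admits a genuine (not merely subsequential) limit. This demands the sharp pointwise bound $|w(z)|\le C|z|^{(2k_0-1)/2-\delta}$, to be extracted by combining the monotonicity-driven estimate $H(r)\le C r^{2k_0-1-2\delta}$ with $L^\infty$ estimates for $w$ and the $C^1$ regularity of $q$, so that the integrals in \eqref{eq:21} converge absolutely at $0$. Statement \eqref{eq:shar-asym-w-tau} is then obtained by restricting the $C^{0,\mu}_{\mathrm{loc}}(\overline{\R^2_+}\setminus\{0\})$ convergence of the blow-up family to $\partial B_1^+$.
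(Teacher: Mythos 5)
Your proposal follows the paper's proof essentially step by step: Almgren monotonicity yields the limit $\gamma$ of $\mathcal N$, a Gronwall-type argument controls $H(r)$, blow-up along sequences produces a homogeneous harmonic profile which identifies $\gamma=(2k_0-1)/2$, and the Fourier expansion in the eigenbasis $\{\cos((2k-1)t/2)\}$ together with variation of parameters for the radial ODE delivers the closed form for $\beta$, its independence from the subsequence, and (via the blow-up normalization and the estimate $\sqrt{H(\tau)}\ge c\tau^{\gamma+1/2}$) its non-vanishing. One small correction: the Gronwall argument actually gives $c_\sigma\, r^{2\gamma+\sigma}\le H(r)\le C\, r^{2\gamma}$ (upper exponent sharp, lower exponent slightly worse), not $C_1 r^{2\gamma}\le H(r)\le C_2 r^{2\gamma-\varepsilon}$ as you wrote; the genuine two-sided bound $H(r)\asymp r^{2\gamma}$ only becomes available after the variation-of-parameters step shows $\lim_{r\to0^+}r^{-2\gamma}H(r)>0$, so be careful not to invoke it earlier --- fortunately your blow-up step only needs boundedness of $\mathcal N(\tau)$, which does not require that lower bound.
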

The proof of Theorem \ref{t:asym} is based on the study of the
monotonicity properties of the Almgren function $\mathcal N$ and on a
fine blow-up analysis which will be performed in Sections
\ref{sec:monotonicity-formula} and  \ref{sec:blow-up-analysis}.

\section{Hardy-Poincar\'e type inequalities}\label{sec:hardy-poincare-type}

In the description of the asymptotic behavior at the Dirichlet-Neumann
junction of solutions to
equation \eqref{eq:5}
a crucial role is played by eigenvalues and eigenfunctions of
the angular component of the principal part of the operator.

Let us
consider the eigenvalue problem
\begin{equation}\label{eq:2}
  \begin{cases}
    -\psi''=\lambda \psi,&\text{in }[0,\pi],\\
\psi'(0)=0,\\
\psi(\pi)=0.
  \end{cases}
\end{equation}
It is easy to verify that \eqref{eq:2}
admits the sequence of (all simple) eigenvalues
\[
\lambda_k=\frac14(2k-1)^2,\quad k\in \N,\ k\geq1,
\]
with corresponding eigenfunctions
\[
\psi_k(t)=\cos\left(\tfrac{2k-1}{2}t\right),\quad k\in \N,\
k\geq1.
\]
It is well known that the normalized eigenfunctions
\begin{equation}\label{eq:17}
\left\{\sqrt{\tfrac2\pi}\cos\left(\tfrac{2k-1}{2}t\right)\right\}_{k\geq1}
\end{equation}
  form an orthonormal basis of the space $L^2(0,\pi)$.
Furthermore, the first eigenvalue $\lambda_1=\frac14$ can be
characterized as
\begin{equation}\label{eq:3}
  \lambda_1=\frac14=\min_{\substack{\psi\in
      H^1(0,\pi)\setminus\{0\}\\\psi(\pi)=0}}
  \frac{\int_0^\pi |\psi'(t)|^2\,dt}{\int_0^\pi |\psi(t)|^2\,dt}.
\end{equation}
For every $r>0$, we let (recall \eqref{eq:def:Gr-Gd} for the definition of $\G_d^r$)
\[
\mathcal H_r= \{w\in H^1(B_r^+): w=0\text{ on $\G_d^r$ } \}.
\]
As a consequence of \eqref{eq:3} we obtain the following
Hardy-Poincar\'e inequality in $\mathcal H_r$.

\begin{Lemma}\label{l:hardy_poincare}
  For every $r>0$ and  $w\in \mathcal H_r$, we have that
\[
\int_{B_r^+}|\nabla w(z)|^2\,dz\geq \frac 14
\int_{B_r^+}\frac{|w(z)|^2}{|z|^2}\,dz.
\]
\end{Lemma}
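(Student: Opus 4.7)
The strategy is to pass to polar coordinates and apply the one-dimensional variational characterization \eqref{eq:3} of the first eigenvalue $\lambda_1=\frac14$ slice by slice on circles of radius $\rho\in(0,r)$.

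Writing $z=(\rho\cos t,\rho\sin t)$ with $\rho\in(0,r)$ and $t\in(0,\pi)$, the volume element becomes $\rho\,d\rho\,dt$, the weight $|z|^{-2}$ becomes $\rho^{-2}$, and the gradient decomposes as
\[
|\nabla w(z)|^2=|\partial_\rho w|^2+\frac{1}{\rho^2}|\partial_t w|^2.
\]
Setting $\psi_\rho(t):=w(\rho\cos t,\rho\sin t)$, I would drop the non-negative radial derivative term to obtain
\[
\int_{B_r^+}|\nabla w|^2\,dz\ \ge\ \int_0^r\frac{1}{\rho}\int_0^\pi|\psi_\rho'(t)|^2\,dt\,d\rho.
\]
The first and main step is to verify that for a.e.\ $\rho\in(0,r)$ the slice $\psi_\rho$ belongs to $H^1(0,\pi)$ and satisfies the boundary condition $\psi_\rho(\pi)=0$, so that \eqref{eq:3} applies and yields $\int_0^\pi|\psi_\rho'|^2\,dt\ge \frac14\int_0^\pi|\psi_\rho|^2\,dt$. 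Substituting this pointwise (in $\rho$) bound and reconverting to cartesian coordinates would give the claim:
\[
\int_{B_r^+}|\nabla w|^2\,dz\ \ge\ \frac14\int_0^r\!\!\int_0^\pi\frac{|\psi_\rho(t)|^2}{\rho^2}\,\rho\,dt\,d\rho=\frac14\int_{B_r^+}\frac{|w|^2}{|z|^2}\,dz.
\]

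The main (minor) obstacle is the justification of the trace condition $\psi_\rho(\pi)=0$ for a.e.\ $\rho$, since $w=0$ on $\Gamma_d^r$ is only an $H^{1/2}$-trace statement. I would handle this by a standard density argument: approximate $w$ in $H^1(B_r^+)$ by smooth functions vanishing in a neighborhood of $\Gamma_d^r$ (recalling that $\mathcal H_r$ is the closure of such functions), perform the polar-coordinate computation and the slicewise estimate for each smooth approximant where the pointwise vanishing on $\{t=\pi\}$ is obvious, and then pass to the limit using the continuity of both sides of the inequality in the $H^1(B_r^+)$-topology together with the monotone/weighted-$L^2$ lower semicontinuity on the right-hand side (Fatou). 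Alternatively, one can invoke Fubini directly after observing that the restriction of the trace of $w$ to $\Gamma_d^r$ in polar form coincides a.e.\ with $\psi_\rho(\pi)$; either route is routine.

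No sharpness or equality cases need be addressed. The constant $\frac14$ is inherited directly from \eqref{eq:3}, so once the slice reduction and the boundary identification are in place, the proof is immediate.
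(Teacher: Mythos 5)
Your proposal is correct and follows essentially the same route as the paper: pass to polar coordinates, drop the nonnegative radial-derivative term, apply the one-dimensional Poincar\'e inequality \eqref{eq:3} on each angular slice, and close the argument by density using the fact that smooth functions vanishing near $\Gamma_d^r$ are dense in $\mathcal H_r$ (for which the paper cites \cite{Doktor2006}). The only cosmetic difference is that the paper performs the slicewise estimate directly for a smooth approximant and then passes to the limit, rather than phrasing the trace issue as a separate preliminary step.
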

\begin{proof}
  Let $w\in C^\infty(\overline{B_r^+})$ with $w=0$ on
  $\ov{ \G_d^r}=[-r,0]\times\{0\}$. Then, in view of \eqref{eq:3},
  \begin{align*}
    \int_{B_r^+}&|\nabla w(z)|^2\,dz\\
&=\int_0^r\int_0^\pi
    \rho\left(\left|\tfrac{\partial}{\partial \rho}(w(\rho\cos
        t,\rho\sin t))\right|^2+\frac1{\rho^2}\left|\tfrac{\partial}{\partial t}(w(\rho\cos
        t,\rho\sin t))\right|^2\right)\,dt\,d\rho\\
&\geq \int_0^r \frac1{\rho}\left(\int_0^\pi
    \left|\tfrac{\partial}{\partial t}(w(\rho\cos
        t,\rho\sin t))\right|^2dt\right)\,d\rho\\
&\geq \frac14 \int_0^r \frac1{\rho}\left(\int_0^\pi
    |w(\rho\cos
        t,\rho\sin t)|^2dt\right)\,d\rho=\frac 14
\int_{B_r^+}\frac{|w(z)|^2}{|z|^2}\,dz
  \end{align*}
We conclude by density, recalling that the space of smooth
functions vanishing on $[-r,0]\times\{0\}$ is dense in $\mathcal
H_r$, see e.g.  \cite{Doktor2006}.
\end{proof}

\begin{Lemma}\label{l:hardy_boundary}
  For every $r>0$ and  $w\in \mathcal H_r$, we have that $x_1^{-1}w^2(x_1,0)\in L^1(0,r)$ and
\[
\int_0^r\frac{w^2(x_1,0)}{x_1}\,dx_1\leq \pi \int_{B_r^+}|\nabla
w(z)|^2\,dz.
\]
\end{Lemma}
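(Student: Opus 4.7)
The plan is to mirror the proof of Lemma \ref{l:hardy_poincare}, this time exploiting the Dirichlet condition on $\Gamma_d^r$ to run a one-dimensional fundamental theorem of calculus in the angular variable, with the boundary trace on $\Gamma_n^r$ playing the role of the angular starting point $t=0$. By the density of smooth functions vanishing on $\overline{\Gamma_d^r}$ in $\mathcal H_r$ (the same density already invoked above), it suffices to establish the inequality for $w\in C^\infty(\overline{B_r^+})$ with $w\equiv 0$ on $\overline{\Gamma_d^r}$; the general case, together with the integrability statement $x_1^{-1}w^2(x_1,0)\in L^1(0,r)$, is then recovered by a monotone convergence argument on the approximating sequence.

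For such a smooth $w$ and each $\rho\in(0,r)$, I would set $v_\rho(t):=w(\rho\cos t,\rho\sin t)$ for $t\in[0,\pi]$. The Dirichlet condition forces $v_\rho(\pi)=w(-\rho,0)=0$, so writing $v_\rho(0)=-\int_0^\pi v_\rho'(t)\,dt$ and applying the Cauchy--Schwarz inequality gives
\[
w(\rho,0)^2\leq \pi\int_0^\pi|v_\rho'(t)|^2\,dt=\pi\int_0^\pi\Bigl|\tfrac{\partial}{\partial t}w(\rho\cos t,\rho\sin t)\Bigr|^2 dt.
\]
Dividing by $\rho$, integrating in $\rho\in(0,r)$, and using the polar form $|\nabla w|^2=|\partial_\rho w|^2+\rho^{-2}|\partial_t w|^2$ together with the area element $dz=\rho\,d\rho\,dt$ then produce exactly the desired bound $\pi\int_{B_r^+}|\nabla w|^2\,dz$ on the right-hand side, via the same reorganisation of polar integrals used in the proof of the preceding lemma.

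The computation itself is routine; the main delicate point is the passage to the limit under density approximation, because the weight $x_1^{-1}$ is not integrable at the origin and continuity of the trace operator $H^1(B_r^+)\to L^2(\Gamma_n^r)$ does not directly control $\int_0^r x_1^{-1}w^2(x_1,0)\,dx_1$. I would handle this by first establishing the estimate with $\int_0^r$ replaced by $\int_\varepsilon^r$ for arbitrary $\varepsilon>0$, where trace continuity on the Lipschitz domain $B_r^+\setminus \overline{B_\varepsilon^+}$ allows one to pass a smooth approximating sequence to the limit uniformly in the truncated weighted integral, and then letting $\varepsilon\to0^+$ by monotone convergence. This simultaneously yields both the stated inequality and the integrability of $x_1^{-1}w^2(x_1,0)$ on $(0,r)$.
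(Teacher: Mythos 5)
Your proof is correct and follows essentially the same route as the paper: for smooth $w$ vanishing on $\overline{\Gamma_d^r}$, the Dirichlet condition plus the fundamental theorem of calculus in $t$ and Cauchy--Schwarz yield $w(\rho,0)^2\leq\pi\int_0^\pi|\partial_t w(\rho\cos t,\rho\sin t)|^2\,dt$, and integrating $\rho^{-1}$ times this over $(0,r)$ gives the bound, the paper merely phrasing $\partial_t w$ as $\rho\,\nabla w\cdot(-\sin t,\cos t)$. The only difference is that the paper closes with a terse ``we conclude by density,'' whereas you spell out the truncation-and-monotone-convergence argument needed to pass to the limit in the nonintegrable weight $x_1^{-1}$; that extra care is valid and arguably worth recording.
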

\begin{proof}
  Let $w\in C^\infty(\overline{B_r^+})$ with $w=0$ on
  $[-r,0]\times\{0\}$. Then for any $0<x_1<r$
  \begin{align*}
    |w(x_1,0)|&=\left|\int_0^\pi\frac{d}{dt}w(x_1\cos t,x_1\sin t)\, dt\right|=
\left|\int_0^\pi x_1\nabla w(x_1\cos t,x_1\sin t) \cdot(-\sin
t,\cos t)\,
  dt\right|\\
&\leq \sqrt \pi\sqrt{\int_0^\pi x_1^2|\nabla w(x_1\cos t,x_1\sin
t)|^2\,dt }.
  \end{align*}
It follows that
\begin{align*}
  \int_0^r\frac{w^2(x_1,0)}{x_1}\,dx_1 \leq
  \pi\int_0^r\int_0^\pi x_1|\nabla w(x_1\cos t,x_1\sin t)|^2\,dt\,dx_1=\pi \int_{B_r^+}|\nabla
w(z)|^2\,dz.
\end{align*}
We conclude by density.
\end{proof}

\section{The monotonicity formula}\label{sec:monotonicity-formula}
Let $w\in H^1(B_R^+)$ be a non trivial solution to \eqref{eq:5}.
For every $r\in (0,R]$ we define
\begin{equation}\label{D(r)}
  D(r)=
 \int_{B_r^+} |\nabla w|^2dz
  -
\int_{B_r^+} pw^2dz
 -\int_0^r q(x_1)w^2(x_1,0)\,dx_1
\end{equation}
and
\begin{equation} \label{H(r)}
H(r)=\frac{1}{r}\int_{S_r^+}w^2 \, ds = \int_0^\pi w^2(r\cos
t,r\sin t)\,dt,
\end{equation}
where $S_r^+:=\{(x_1,x_2):x_1^2+x_2^2= r^2\text{ and
}x_2>0\}$.

In order to differentiate the functions $D$ and $H$, the following
Pohozaev type identity is needed.

\begin{Theorem} \label{t:pohozaev}
Let $w$ solve \eqref{eq:5}. Then for a.e. $r\in (0,R)$ we have
\begin{multline}\label{eq:poho}
\frac{r}{2}\int_{S_r^+} |\nabla w|^2ds
=r\int_{S_r^+}\bigg|\frac{\partial w}{\partial
    \nu}\bigg|^2\,ds\\
  -\frac{1}2\int_{0}^r\big(q(x_1)+ x_1
  q'(x_1)\big)w^2(x_1,0)\,dx_1
 +
  \frac{r}2q(r)w^2(r,0)
+\int_{B_r^+}pwz\cdot\nabla w\,dz
\end{multline}
and
\begin{equation}\label{eq:poho2}
 \int_{B_r^+} |\nabla w|^2dz
  =
\int_{B_r^+} pw^2dz
 +\int_{S_r^+}\frac{\partial w}{\partial\nu}w\,ds+
\int_0^r q(x_1)w^2(x_1,0)\,dx_1.
\end{equation}
\end{Theorem}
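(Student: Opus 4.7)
My plan is to derive both identities by the classical multiplier method: for \eqref{eq:poho2} use the multiplier $w$, and for \eqref{eq:poho} use the Rellich multiplier $z\cdot\nabla w$. I would first establish the auxiliary regularity that makes these multipliers admissible: by standard elliptic regularity, since $p\in L^\infty(B_R^+)$ and $q\in C^1([0,R))$, the function $w$ belongs to $H^2_{\mathrm{loc}}(\overline{B_R^+}\setminus\{0\})$. The obstruction to applying the multiplier directly on $B_r^+$ is only the corner at the junction $0$, so the natural strategy is to work on the annular domain $B_r^+\setminus \overline{B_\delta^+}$ and pass to the limit as $\delta\to 0^+$ along a suitable sequence.

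For \eqref{eq:poho2} I would start from the weak formulation, approximate $\chi_{B_r^+}$ by a smooth radial cutoff, and test with $\eta_\varepsilon(|z|)\,w$. The interior term produces $\int_{B_r^+}|\nabla w|^2$ as $\varepsilon\to 0$, the Neumann part of the flat boundary yields $\int_0^r q(x_1)w^2(x_1,0)\,dx_1$ via the boundary condition, and the Dirichlet part contributes $0$ since $w=0$ on $\Gamma_d^r$. The outer boundary integral over $S_r^+$ exists for a.e. $r\in(0,R)$ by Fubini applied to $|\nabla w|^2\in L^1$, giving the term $\int_{S_r^+}(\partial_\nu w)w\,ds$.

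For \eqref{eq:poho} I would multiply $-\Delta w=pw$ by $z\cdot\nabla w$ on the annulus $B_r^+\setminus\overline{B_\delta^+}$, where $w\in H^2$. The algebraic identity
\[
-\Delta w\,(z\cdot\nabla w)=\tfrac12\div\!\bigl(|\nabla w|^2 z\bigr)-\div\!\bigl((z\cdot\nabla w)\nabla w\bigr)
\]
reduces the left-hand side to a pure boundary integral upon integration. On $S_r^+$ and $S_\delta^+$ the normal is radial, so $z\cdot\nabla w = \pm|z|\,\partial_\nu w$, producing the terms $\frac{r}{2}\int_{S_r^+}|\nabla w|^2\,ds - r\int_{S_r^+}|\partial_\nu w|^2\,ds$ (and their analogues on $S_\delta^+$ with opposite sign). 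On the flat parts the factor $z\cdot\nu=-x_2=0$ kills the $|\nabla w|^2$ contribution; what remains is $\int_{(-r,-\delta)\cup(\delta,r)} x_1\,\partial_{x_1}w\,\partial_{x_2}w\,dx_1$, which vanishes on the Dirichlet side (tangential derivative of $w\equiv 0$) and, on the Neumann side, equals $-\frac12\int_\delta^r x_1 q(x_1)\,\partial_{x_1}(w^2(x_1,0))\,dx_1$; a further integration by parts produces $-\frac r2 q(r)w^2(r,0)+\frac\delta2 q(\delta)w^2(\delta,0)+\frac12\int_\delta^r (q+x_1 q')w^2(x_1,0)\,dx_1$.

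The main obstacle is to kill the $S_\delta^+$ and $(\delta,0)$ contributions as $\delta\to 0^+$. For this I would use that $|\nabla w|^2\in L^1(B_R^+)$ and $x_1^{-1}w^2(x_1,0)\in L^1(0,R)$ (the latter is exactly Lemma \ref{l:hardy_boundary}); a standard Fubini/mean value argument yields a sequence $\delta_n\to 0^+$ along which simultaneously $\delta_n\int_{S_{\delta_n}^+}|\nabla w|^2\,ds\to 0$ and $w^2(\delta_n,0)\to 0$, which forces the $S_{\delta_n}^+$-boundary contributions (bounded by $\delta_n\int_{S_{\delta_n}^+}|\nabla w|^2$) and the term $\delta_n q(\delta_n)w^2(\delta_n,0)$ to vanish. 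The remaining interior integral $\int_{B_r^+\setminus B_{\delta_n}^+}pw(z\cdot\nabla w)$ converges by dominated convergence since $pw(z\cdot\nabla w)\in L^1(B_r^+)$. This yields \eqref{eq:poho} for a.e. $r\in(0,R)$ for which the traces on $S_r^+$ are well defined, completing the proof.
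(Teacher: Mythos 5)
Your proof is correct and follows essentially the same route as the paper's: both test with the Rellich multiplier $z\cdot\nabla w$ (respectively $w$) on the annulus $B_r^+\setminus\overline{B_\delta^+}$, use local $H^2$-regularity away from the junction, exploit the Dirichlet and Neumann conditions on the flat boundary together with an integration by parts in $x_1$, and pass to the limit along a sequence $\delta_n\to 0^+$ that annihilates the inner boundary contributions. The only superficial difference is in how the good sequence $\delta_n$ is produced: the paper argues by contradiction using $w\in H^1(B_R^+)$ and the trace embedding to get $\delta_n\big(w^2(\delta_n,0)+\int_{S_{\delta_n}^+}|\nabla w|^2\,ds\big)\to 0$, while you invoke Lemma~\ref{l:hardy_boundary} to obtain $w^2(\delta_n,0)\to 0$ directly; both suffice.
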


\begin{proof}
We observe that, by elliptic regularity theory, $w\in
H^2(B_r^+\setminus  B_\e^+)$ for all $0<\e<r<R$. Furthermore, the
fact that $w$ has null trace on $\Gamma_d^R$ implies that
$\frac{\partial w}{\partial x_1}$ has null trace on $\Gamma_d^R$.
Then, testing \eqref{eq:5} with $z\cdot\nabla w$ and integrating
over $B_r^+\setminus  B_\e^+$, we obtain that
\begin{multline}\label{eq:6}
  \frac r2\int_{S_r^+}|\nabla w|^2ds-  \frac \e 2\int_{S_\e^+}|\nabla
  w|^2ds=
\int_{B_r^+\setminus  B_\e^+}pwz\cdot\nabla w\,dz\\
+r\int_{S_r^+}\bigg|\frac{\partial w}{\partial
    \nu}\bigg|^2\,ds-
\e\int_{S_\e^+}\bigg|\frac{\partial w}{\partial
    \nu}\bigg|^2\,ds+\int_\e^r q(x_1)w(x_1,0) x_1\frac{\partial
    w}{\partial x_1}(x_1,0)\,dx_1.
\end{multline}
An integration by parts, which can be easily justified by an
approximation argument, yields that
\begin{multline}\label{eq:1}
\int_\e^r q(x_1)w(x_1,0) x_1\frac{\partial
    w}{\partial x_1}(x_1,0)\,dx_1=\frac r2 q(r)w^2(r,0)\\ -\frac \e2
  q(\e)w^2(\e,0)-\frac12\int_\e^r (q+x_1q')w^2(x_1,0)\,dx_1.
\end{multline}
We observe that there exists a sequence $\e_n\to 0^+$ such that
\[
\lim_{n\to \infty} \left[  \e_n w^2(\e_n,0)+
 \e_n\int_{S_{\e_n}^+}|\nabla
  w|^2ds \right]=0.
\]
Indeed, if no such sequence exists, there would exist $\e_0>0$
such that
$$
  w^2(r,0)+
\int_{S_{r}^+}|\nabla
  w|^2ds  \geq \frac{C}{r}\quad \text{for all } r\in(0,\e_0),\quad\text{for
    some }C>0;
$$
integration of  the above inequality on $(0,\e_0)$ would then
contradict the fact that $w\in H^1(B_{R}^+)$ and, by trace
embedding, $w\in L^2(\Gamma_n^{\e_0})$. Then, passing to the limit
in \eqref{eq:6} and \eqref{eq:1} with $\e=\e_n$ yields
\eqref{eq:poho}. Finally
 \eqref{eq:poho2} follows by testing \eqref{eq:5} with $w$ and
 integrating by parts in $B_r^+$.
\end{proof}

In the following lemma we compute the derivative of the function $H$.
\begin{Lemma} \label{l:hprime}
$H\in W^{1,1}_{\rm loc}(0,R)$ and
\begin{equation}\label{H'}
  H'(r)=2
\int_0^\pi w(r\cos t,r\sin t) \tfrac{\partial
w}{\partial\nu}(r\cos t,r\sin t)   \,dt= \frac2r\int_{S_r^+}w
\tfrac{\partial w}{\partial\nu}\,ds,
\end{equation}
in a distributional sense and for a.e. $r\in (0, R)$, and
\begin{equation}\label{H'2}
 H'(r)=\frac2r D(r), \quad \text{for a.e. } r\in (0, R).
\end{equation}
\end{Lemma}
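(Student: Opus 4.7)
The plan is to first establish formula \eqref{H'} via a polar-coordinate rewriting of $H$, and then obtain \eqref{H'2} as an immediate consequence of the Pohozaev-type identity \eqref{eq:poho2} from Theorem \ref{t:pohozaev}. Observe that on $S_r^+$, parametrized by $(r\cos t,r\sin t)$ with $t\in(0,\pi)$, we have $ds=r\,dt$ and the outer unit normal is $\nu=(\cos t,\sin t)$, so that $\partial w/\partial\nu=\partial_r w$ along $S_r^+$. Hence the two expressions on the right-hand side of \eqref{H'} coincide and represent the expected formal derivative of $H(r)=\int_0^\pi w^2(r\cos t,r\sin t)\,dt$.

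To justify this rigorously, I would set $W(r,t):=w(r\cos t,r\sin t)$ and note that, since $w\in H^1(B_R^+)$, one has $\partial_rW\in L^2_{\rm loc}((0,R)\times(0,\pi);\,r\,dr\,dt)$ with $\partial_rW(r,t)=\nabla w(r\cos t,r\sin t)\cdot(\cos t,\sin t)$ in the distributional sense. The map $r\mapsto H(r)$ then belongs to $W^{1,1}_{\rm loc}(0,R)$ with distributional derivative
$$H'(r)=2\int_0^\pi W(r,t)\,\partial_rW(r,t)\,dt.$$
The standard way to verify this is to approximate $w$ by smooth functions $w_n\in C^\infty(\overline{B_R^+})$ converging to $w$ in $H^1(B_R^+)$; for each $w_n$, the corresponding $H_n$ is smooth in $r$ with the explicit derivative formula obtained by classical differentiation under the integral. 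The $H^1$-convergence, combined with Fubini and continuity of traces on the half-circles $S_r^+$ integrated in $r$, allows passing to the limit to obtain both the $W^{1,1}_{\rm loc}$ regularity of $H$ and the identity \eqref{H'}.

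Formula \eqref{H'2} is then a direct consequence of the Pohozaev-type identity \eqref{eq:poho2}, which can be rewritten as
$$\int_{S_r^+}\frac{\partial w}{\partial\nu}w\,ds=\int_{B_r^+}|\nabla w|^2\,dz-\int_{B_r^+}pw^2\,dz-\int_0^r q(x_1)w^2(x_1,0)\,dx_1=D(r),$$
so that \eqref{H'} yields $H'(r)=\frac{2}{r}D(r)$ for a.e.\ $r\in(0,R)$.

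The main technical obstacle is establishing the Sobolev regularity of $H$ and justifying the differentiation formula with only $H^1$-regularity of $w$: higher interior regularity on annular regions is available by elliptic theory (as already used in the proof of Theorem \ref{t:pohozaev}), but the formula must also be valid near the flat boundary portions $\Gamma_n^r$ and $\Gamma_d^r$, so one has to control boundary traces carefully. The density argument coupled with standard $H^1$-trace estimates on the half-circles $S_r^+$ handles this, without requiring any regularity beyond $H^1$.
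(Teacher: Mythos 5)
Your derivation of \eqref{H'2} from \eqref{H'} and the Pohozaev identity \eqref{eq:poho2} is exactly what the paper does. For \eqref{H'} and the $W^{1,1}_{\rm loc}$ regularity, your route is correct but differs slightly from the paper's. You propose approximating $w$ by $w_n\in C^\infty(\overline{B_R^+})$ in $H^1$, differentiating $H_n(r)=\int_0^\pi w_n^2(r\cos t,r\sin t)\,dt$ classically, and passing to the limit in $L^1_{\rm loc}(0,R)$; the density of smooth functions in $H^1$ of the half-ball (a Lipschitz domain) justifies this. The paper instead argues directly: it picks $\phi\in C^\infty_c(0,R)$, writes $\int_0^R H(r)\phi'(r)\,dr$ as a double integral, uses Fubini to integrate by parts in $r$ for fixed $t$ (valid because $w\in C^1(B_R^+)$ by interior elliptic regularity, and $w,\nabla w\in L^2$ supply the integrability needed for Fubini), and reads off the distributional derivative. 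Both routes are standard and give the same result; yours trades the interior-regularity plus Fubini bookkeeping for a limiting argument, which is a bit more pedestrian but avoids having to invoke the pointwise $C^1$ smoothness of $w$ on interior rays. Either way, the identification of $\partial w/\partial\nu$ with the radial derivative on $S_r^+$ and the $ds=r\,dt$ rewriting are used in both.
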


\begin{proof}
Let $\phi\in C^\infty_{\rm c}(0,R)$. Since $w,\nabla w\in
L^2(B_R^+)$ and $w\in C^1(B_R^+)$, using twice Fubini's Theorem we
obtain that
\begin{align*}
  &\int_0^R H(r)\phi'(r)\,dr=
  \int_0^R \bigg(\int_0^\pi w^2(r\cos t,r\sin
  t)\,dt\bigg)\phi'(r)\,dr\\
&=
  \int_0^\pi \bigg(\int_0^R w^2(r\cos t,r\sin t)
  \phi'(r)\,dr\bigg)\,dt
= -\int_0^\pi \bigg(\int_0^R \frac{d}{dr}\Big(w^2(r\cos t,r\sin
t)\Big)
  \phi(r)\,dr\bigg)\,dt\\
&= -\int_0^\pi \bigg(\int_0^R \Big(2w(r\cos t,r\sin t)
\tfrac{\partial w}{\partial\nu}(r\cos t,r\sin t) \Big)
  \phi(r)\,dr\bigg)\,dt\\
&= -\int_0^R \bigg(\int_0^\pi \Big(2w(r\cos t,r\sin t)
\tfrac{\partial w}{\partial\nu}(r\cos t,r\sin t) \Big)
  \,dt\bigg) \phi(r)\,dr
\end{align*}
thus proving \eqref{H'}. Identity \eqref{H'2} follows directly
from \eqref{H'} and \eqref{eq:poho2}.
\end{proof}

Let us now study the regularity of the function $D$.
\begin{Lemma}\label{l:dprime}
  The function $D$ defined in \eqref{D(r)} belongs to $W^{1,1}(0, R)$ and
\begin{align}\label{D'F}
  D'(r)&=
2\int_{S_r^+} \bigg|\frac{\partial w}{\partial
    \nu}\bigg|^2ds\\
\notag&\quad -\frac1r\int_{0}^r\big(q(x_1)+ x_1
q'(x_1)\big)w^2(x_1,0)\,dx_1+\frac2r \int_{B_r^+}pwz\cdot\nabla
w\,dz-\int_{S_r^+}pw^2\,ds
\end{align}
in a distributional sense and for a.e. $r\in (0,R)$.
\end{Lemma}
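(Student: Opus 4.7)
The plan is to decompose $D$ into three cumulative integrals, verify Sobolev regularity for each piece, compute the distributional derivatives by coarea and the fundamental theorem of calculus, and finally use the Pohozaev identity from Theorem~\ref{t:pohozaev} to convert the tangential gradient term into the form stated in \eqref{D'F}.

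First I would write $D(r) = I_1(r) + I_2(r) + I_3(r)$ with
\[
I_1(r) = \int_{B_r^+} |\nabla w|^2\,dz, \quad I_2(r) = -\int_{B_r^+} pw^2\,dz, \quad I_3(r) = -\int_0^r q(x_1) w^2(x_1,0)\,dx_1.
\]
The function $r\mapsto I_1(r)$ is nondecreasing, so it is $BV_{\rm loc}$; the coarea formula identifies its a.e.\ derivative as $\int_{S_r^+} |\nabla w|^2\,ds$, which is integrable on $(0,R)$ precisely because $|\nabla w|^2 \in L^1(B_R^+)$, so $I_1 \in W^{1,1}(0,R)$. The same coarea argument applied to the integrable functions $(pw^2)^{\pm}$ gives $I_2 \in W^{1,1}(0,R)$ with distributional derivative $-\int_{S_r^+} pw^2\,ds$. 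For $I_3$, boundedness of $q$ on $[0,R]$ together with the trace estimate $w(\cdot,0) \in L^2(\Gamma_n^R)$ (which actually follows from Lemma~\ref{l:hardy_boundary}) shows that $q(x_1)w^2(x_1,0) \in L^1(0,R)$, so $I_3$ is absolutely continuous with derivative $-q(r)w^2(r,0)$ a.e.

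Adding the three contributions gives, for a.e. $r \in (0,R)$,
\begin{equation}\label{eq:plan-interm}
D'(r) = \int_{S_r^+} |\nabla w|^2\,ds - \int_{S_r^+} pw^2\,ds - q(r)w^2(r,0).
\end{equation}
At this point I would invoke the Pohozaev identity \eqref{eq:poho} from Theorem~\ref{t:pohozaev}, which at a.e.\ $r \in (0,R)$ allows one to replace the tangential gradient integral via
\[
\int_{S_r^+} |\nabla w|^2\,ds = 2\int_{S_r^+}\Bigl|\tfrac{\partial w}{\partial \nu}\Bigr|^2 ds - \frac{1}{r}\int_0^r \bigl(q(x_1) + x_1 q'(x_1)\bigr) w^2(x_1,0)\,dx_1 + q(r)w^2(r,0) + \frac{2}{r}\int_{B_r^+} pwz\cdot\nabla w\,dz.
\]
Substituting this into \eqref{eq:plan-interm} the boundary term $q(r)w^2(r,0)$ cancels, yielding exactly \eqref{D'F}.

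The routine checks are genuinely routine; the only mildly delicate point is justifying $I_1 \in W^{1,1}$ (as opposed to merely $BV$), but monotonicity plus integrability of the coarea derivative settles it. The essential ingredient, providing both the algebraic rewriting and the rigorous a.e.\ identity for the level-set integral, is the Pohozaev identity \eqref{eq:poho}, whose proof has already been carried out; once it is invoked, the rest is bookkeeping and cancellation.
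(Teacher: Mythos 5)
Your proof follows essentially the same route as the paper: you first obtain the raw derivative $D'(r)=\int_{S_r^+}|\nabla w|^2\,ds-\int_{S_r^+}pw^2\,ds-q(r)w^2(r,0)$ and then substitute via the Pohozaev identity \eqref{eq:poho} to arrive at \eqref{D'F}, which is exactly the paper's two-step argument. The additional detail you supply on the $W^{1,1}$ regularity of the three pieces (via coarea/Fubini and integrability of the level-set and trace terms) is a legitimate fleshing-out of what the paper states tersely, and the algebra and cancellation of $q(r)w^2(r,0)$ is correct.
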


\begin{proof}
From the fact that $w\in  H^1(B_R^+)$ and $w\big|_{\Gamma_n^R}\in
L^2(\Gamma_n^R)$, we deduce that $D$ belongs to $W^{1,1}(0,R)$ and
\begin{equation} \label{I'(r)}
  D'(r)=
 \int_{S_r^+} |\nabla w|^2ds
  -
\int_{S_r^+} pw^2ds
 - q(r)w^2(r,0)
\end{equation}
 for a.e. $r\in (0,R)$ and in the distributional sense.

The conclusion follows combining \eqref{I'(r)} and~\eqref{eq:poho}.
\end{proof}

 \begin{Lemma} \label{welld}
There exists $R_0\in(0,R)$   such that $H(r)>0$ for any $r\in
(0,R_0)$.
\end{Lemma}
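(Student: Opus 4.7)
The plan is a contradiction argument: if $H(r_n) = 0$ for a sequence $r_n \to 0^+$, one forces $w$ to vanish in a small half-ball centered at the junction, and classical interior unique continuation then propagates this to $w \equiv 0$ on $B_R^+$, against nontriviality.

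To set up the contradiction I would first select, via a standard Fubini/coarea argument, a sequence $r_n \to 0^+$ with $H(r_n) = 0$ along which the $L^2$-trace of $w$ on $S_{r_n}^+$ is well defined. The scalar identity $\int_0^\pi w^2(r_n\cos t, r_n\sin t)\, dt = 0$ then forces this trace to vanish, so that $w|_{B_{r_n}^+}$ lies in $\mathcal H_{r_n}$ and additionally is zero on $S_{r_n}^+$. Specializing the energy identity \eqref{eq:poho2} at $r = r_n$ kills the boundary term $\int_{S_{r_n}^+} w\,\partial_\nu w\, ds$ and yields
\begin{equation*}
\int_{B_{r_n}^+} |\nabla w|^2\, dz = \int_{B_{r_n}^+} p w^2 \, dz + \int_0^{r_n} q(x_1) w^2(x_1, 0)\, dx_1.
\end{equation*}

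Next I would invoke Lemma \ref{l:hardy_poincare} through the trivial inequality $\int_{B_{r_n}^+} w^2\, dz \leq r_n^2 \int_{B_{r_n}^+} |z|^{-2} w^2\, dz$, and Lemma \ref{l:hardy_boundary} through $\int_0^{r_n} w^2(x_1,0)\, dx_1 \leq r_n \int_0^{r_n} x_1^{-1} w^2(x_1,0)\, dx_1$, arriving at
\begin{equation*}
\int_{B_{r_n}^+} |\nabla w|^2\, dz \leq \bigl(4\|p\|_{L^\infty(B_R^+)} r_n^2 + \pi \|q\|_{L^\infty(0,R/2)} r_n\bigr) \int_{B_{r_n}^+} |\nabla w|^2\, dz.
\end{equation*}
For $n$ sufficiently large the prefactor is strictly less than $1$, which forces $w \equiv 0$ on $B_{r_n}^+$. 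The classical strong unique continuation principle for the equation $-\Delta w = pw$ with $p \in L^\infty$ on the connected open set $B_R^+$ then propagates this local vanishing to $w \equiv 0$ on $B_R^+$, contradicting the hypothesis that $w$ is nontrivial.

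The step I expect to be most delicate is the first one: one has to carefully justify, via a measure-theoretic selection, that $H(r_n) = 0$ genuinely produces vanishing of the $L^2$-trace of $w$ on $S_{r_n}^+$, and that $w|_{B_{r_n}^+}$ is admissible as a test function leading to the energy identity above. This should be handled by the density result already invoked in the proof of Lemma \ref{l:hardy_poincare} (cf.\ \cite{Doktor2006}), which approximates elements of $\mathcal H_{r_n}$ by smooth functions vanishing on $\Gamma_d^{r_n} \cup S_{r_n}^+$.
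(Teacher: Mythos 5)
Your proof is correct and follows essentially the same route as the paper: use the fact that $H(r)=0$ forces the trace to vanish on the arc, specialize the energy identity \eqref{eq:poho2} to kill the boundary flux term, and combine the Hardy--Poincar\'e inequalities of Lemmas~\ref{l:hardy_poincare} and~\ref{l:hardy_boundary} with a smallness-of-radius condition to absorb the potential terms and conclude $w\equiv0$ locally, then propagate by classical unique continuation. The only cosmetic difference is that the paper first fixes $R_0$ via the smallness condition \eqref{eq:r_0} and then contradicts $H(r_0)=0$ for a single $r_0<R_0$, whereas you phrase it as a sequence $r_n\to 0^+$, and you leave implicit the (trivial) final step that $\nabla w=0$ on $B_{r_n}^+$ together with the Dirichlet condition on $\Gamma_d^{r_n}$ gives $w\equiv 0$ there.
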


\begin{proof}
Let $R_0\in (0,R)$ be such that
\begin{align}\label{eq:r_0}
  4\|p\|_{L^\infty(B_R^+)}R_0^2+\pi\|q\|_{L^\infty(\Gamma_n^R)}R_0<1.
\end{align}
Assume by contradiction that there exists $r_0\in(0,R_0)$ such
that $H(r_0)=0$, so that $w=0$ a.e. on $S_{r_0}^+$. From
\eqref{eq:poho2} it follows that
\begin{equation*}
 \int_{B_{r_0}^+} |\nabla w|^2dz
  -
\int_{B_{r_0}^+} pw^2dz
 -\int_0^{r_0} q(x_1)w^2(x_1,0)\,dx_1=0.
\end{equation*}
From Lemmas \ref{l:hardy_poincare} and \ref{l:hardy_boundary}, we get
\begin{align*}
0&= \int_{B_{r_0}^+} |\nabla w|^2dz
  -
\int_{B_{r_0}^+} pw^2dz
 -\int_0^{r_0} q(x_1)w^2(x_1,0)\,dx_1\\
\notag & \geq
  \Big[1-
4\|p\|_{L^\infty(B_R^+)}r_0^2-\pi\|q\|_{L^\infty(\Gamma_n^R)}r_0\Big]
\int_{B_{r_0}^+}|\nabla w|^2dz,
\end{align*}
which, together with \eqref{eq:r_0} and Lemma
\ref{l:hardy_poincare}, implies $w\equiv 0$ in $B_{r_0}^+$.  From
classical unique continuation principles for second order elliptic
equations with locally bounded coefficients (see e.g.
\cite{wolff}) we can conclude that $w=0$ a.e. in $B_R^+$, a
contradiction.
\end{proof}

Thanks to Lemma \ref{welld}, the frequency function
\begin{equation}\label{N(r)}
\mathcal N:(0,R_0)\to\R,\quad \mathcal N(r)=\frac{D(r)}{H(r)},
\end{equation}
is well defined.
 Using Lemmas  \ref{l:hprime} and \ref{l:dprime}, we  now
 compute the derivative of ${\mathcal N}$.

 \begin{Lemma}\label{mono} The function
   ${\mathcal N}$ defined in \eqref{N(r)} belongs to $W^{1,1}_{{\rm
       loc}}(0, R_0)$ and
\begin{align}\label{formulona}
{\mathcal N}'(r)=\nu_1(r)+\nu_2(r)
\end{align}
in a distributional sense and for a.e. $r\in (0,R_0)$, where
\begin{align}\label{eq:nu1}
\nu_1(r)=&\frac{2r\Big[
    \left(\int_{S_r^+}
  \left|\frac{\partial w}{\partial \nu}\right|^2 ds\right) \cdot
    \left(\int_{S_r^+}
  w^2\,ds\right)-\left(
\int_{S_r^+}
  w\frac{\partial w}{\partial \nu}\, ds\right)^{\!2} \Big]}
{\left( \int_{S_r^+}
  w^2\,ds\right)^2}
\end{align}
and
\begin{align}\label{eq:nu2}
  \nu_2(r)= &
-\frac{\int_{0}^r\big(q(x)+ x q'(x)\big)w^2(x,0)\,dx}{\int_{S_r^+}
  w^2\,ds}+2\frac{\int_{B_r^+}pwz\cdot\nabla w\,dz}{\int_{S_r^+}w^2\,ds}
-\frac{r\int_{S_r^+}pw^2\,ds}{\int_{S_r^+}w^2ds}.
\end{align}
\end{Lemma}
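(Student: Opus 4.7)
\smallskip
\noindent\textbf{Proof plan.} The strategy is a direct quotient-rule computation, with the key algebraic simplification coming from the Pohozaev identity \eqref{eq:poho2}. First, I would note that Lemma \ref{l:hprime} gives $H\in W^{1,1}_{\rm loc}(0,R)$ and Lemma \ref{l:dprime} gives $D\in W^{1,1}(0,R)$. Since $H$ is continuous and, by Lemma \ref{welld}, strictly positive on $(0,R_0)$, on any compact subinterval of $(0,R_0)$ it is bounded away from zero. Hence $1/H$ is Lipschitz on such subintervals, and the product $\mathcal N = D\cdot(1/H)$ lies in $W^{1,1}_{\rm loc}(0,R_0)$; the distributional derivative is given by the usual quotient formula
\[
\mathcal N'(r)=\frac{D'(r)H(r)-D(r)H'(r)}{H(r)^2}=\frac{D'(r)}{H(r)}-\frac{D(r)H'(r)}{H(r)^2}
\]
for a.e.\ $r\in(0,R_0)$.

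Next, I would substitute the expression \eqref{H'} for $H'$, using in particular that $\tfrac{1}{r}\int_{S_r^+} w\,\tfrac{\partial w}{\partial\nu}\,ds$ equals $D(r)/r$ (as a consequence of \eqref{H'2}, or equivalently of the Pohozaev identity \eqref{eq:poho2}). In this way the subtracted term becomes
\[
\frac{D(r)H'(r)}{H(r)^2}=\frac{2}{r\,H(r)^2}\bigg(\int_{S_r^+}w\,\tfrac{\partial w}{\partial\nu}\,ds\bigg)^{\!2},
\]
which, recalling $\int_{S_r^+}w^2\,ds=r\,H(r)$, is exactly the negative contribution appearing in the numerator of $\nu_1$ in \eqref{eq:nu1} after division by $(\int_{S_r^+}w^2\,ds)^2$.

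Then I would plug the explicit expression \eqref{D'F} for $D'$ into the first summand $D'(r)/H(r)$. The first term of $D'(r)$, namely $2\int_{S_r^+}|\tfrac{\partial w}{\partial\nu}|^2\,ds$, combines with the contribution isolated in the previous paragraph to produce
\[
\frac{2r\Big[\big(\int_{S_r^+}|\partial_\nu w|^2\,ds\big)\big(\int_{S_r^+}w^2\,ds\big)-\big(\int_{S_r^+}w\,\partial_\nu w\,ds\big)^{\!2}\Big]}{\big(\int_{S_r^+}w^2\,ds\big)^2},
\]
which is precisely $\nu_1(r)$. The remaining three terms in \eqref{D'F}, all divided by $H(r)=r^{-1}\int_{S_r^+}w^2\,ds$, give exactly the three summands of $\nu_2(r)$ in \eqref{eq:nu2}. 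Adding up yields \eqref{formulona}.

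The calculation is routine; the only point requiring a little care is the identification of $D(r)$ with the boundary integral $\int_{S_r^+}w\,\tfrac{\partial w}{\partial\nu}\,ds$, which is where the Pohozaev identity \eqref{eq:poho2} enters and which is essential to turn the mixed term $D\,H'$ into the Cauchy--Schwarz-type expression in $\nu_1$. Nonnegativity of $\nu_1$ (by Cauchy--Schwarz on $S_r^+$) is then automatic and will be used later in the monotonicity argument, though it is not needed for the statement proved here.
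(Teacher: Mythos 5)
Your proposal is correct and follows essentially the same route as the paper: establish $\mathcal N\in W^{1,1}_{\rm loc}(0,R_0)$ via Lemmas \ref{l:hprime}, \ref{welld}, \ref{l:dprime}, apply the quotient rule, and use \eqref{H'2} (equivalently the Pohozaev identity \eqref{eq:poho2}) to rewrite the mixed term $D\,H'/H^2$ as a squared boundary integral so that it combines with the leading term of $D'/H$ into the Cauchy--Schwarz expression $\nu_1$. The paper phrases the substitution as $D(r)H'(r)=\tfrac12 r(H'(r))^2$ while you use $D(r)=\int_{S_r^+}w\,\partial_\nu w\,ds$; these are the same identity and the rest of the bookkeeping agrees.
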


\begin{proof} From Lemmas \ref{l:hprime}, \ref{welld}, and
  \ref{l:dprime}, it follows that ${\mathcal N}\in W^{1,1}_{{\rm
      loc}}(0,R_0)$. From
\eqref{H'2} we deduce that
$$
{\mathcal N}'(r)=\frac{D'(r)H(r)-D(r)H'(r)}{(H(r))^2}
=\frac{D'(r)H(r)-\frac{1}{2} r (H'(r))^2}{(H(r))^2}
$$
and the proof of the lemma easily follows from (\ref{H'}) and
(\ref{D'F}).
\end{proof}

\noindent We now prove that ${\mathcal N}(r)$ admits a finite
limit as $r\to 0^+$.

\begin{Lemma}\label{l:limitN}
  There exists $\gamma\in[0,+\infty)$ such that $\lim_{r\to 0^+}\mathcal N(r)=\gamma$.
\end{Lemma}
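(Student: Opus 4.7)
The plan is the classical Almgren monotonicity strategy: show that, after an integrating-factor correction, $\mathcal{N}$ is monotone and bounded on $(0,R_0)$, hence admits a finite limit at the origin.

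First, I would verify that $\mathcal{N}(r)\geq 0$ on $(0,R_0)$. The proof of Lemma \ref{welld} already shows, via Lemmas \ref{l:hardy_poincare}--\ref{l:hardy_boundary} and the choice \eqref{eq:r_0} of $R_0$, that
\[
D(r)\geq \bigl(1- 4\|p\|_{L^\infty(B_R^+)}r^2-\pi\|q\|_{L^\infty(\Gamma_n^R)}r\bigr)\!\int_{B_r^+}|\nabla w|^2\,dz,
\]
which is strictly positive for $r\in(0,R_0)$ (unless $w\equiv 0$ in $B_r^+$, excluded by unique continuation). In particular, this provides the useful coercivity estimate $\int_{B_r^+}|\nabla w|^2\,dz\leq 2D(r)$ on $(0,R_0)$ up to possibly shrinking $R_0$.

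Next, in the decomposition $\mathcal{N}'=\nu_1+\nu_2$ of Lemma \ref{mono}, the angular part is nonnegative for free: Cauchy--Schwarz on $S_r^+$ applied to $\int_{S_r^+}w\,\partial_\nu w\,ds$ gives $\nu_1(r)\geq 0$ a.e. The core of the argument is then to bound $\nu_2$ from below by something linear in $\mathcal{N}(r)$ plus an $L^1$ remainder. Since $q\in C^1$, the quantity $q+xq'$ is bounded on $[0,R]$; combining with Lemma \ref{l:hardy_boundary} (which yields $\int_0^r w^2(x,0)\,dx\leq r\int_0^r x^{-1}w^2(x,0)\,dx\leq \pi r\int_{B_r^+}|\nabla w|^2$) and the coercivity estimate from Step~1, the first summand of $\nu_2$ is controlled by $C\,\mathcal{N}(r)$. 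A parallel application of Cauchy--Schwarz together with Lemma \ref{l:hardy_poincare} (giving $\int_{B_r^+}w^2\leq 4r^2\int_{B_r^+}|\nabla w|^2$) handles the interior term $2\int_{B_r^+}pwz\cdot\nabla w/\int_{S_r^+}w^2$ with an extra factor of $r$. Finally, since $\int_{S_r^+}w^2=rH(r)$, the last summand is dominated trivially by $\|p\|_\infty r$. Together this gives
\[
\mathcal{N}'(r)\geq -C_1\,\mathcal{N}(r)-C_2\, r\quad\text{for a.e.\ }r\in(0,R_0),
\]
with constants $C_1,C_2$ depending only on $R_0$, $\|p\|_{L^\infty}$ and $\|q\|_{C^1}$.

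The conclusion then follows from a Gronwall-type argument: multiplying by the integrating factor $e^{C_1 r}$ shows that the auxiliary function
\[
\Phi(r):=e^{C_1 r}\mathcal{N}(r)+C_2\int_0^r s\,e^{C_1 s}\,ds
\]
is nondecreasing on $(0,R_0)$, and it is nonnegative by Step~1. Hence $\Phi$ admits a finite, nonnegative limit $L$ as $r\to 0^+$; since $e^{C_1 r}\to 1$ and the integral vanishes, we obtain $\mathcal{N}(r)\to \gamma:=L\in[0,+\infty)$, as claimed. The main technical point I expect is the sharp treatment of $\nu_2$: one must apply the two Hardy-Poincar\'e inequalities so that no singular $1/r$ factor survives, and the conversion of $\int_{B_r^+}|\nabla w|^2/H(r)$ into $\mathcal{N}(r)$ up to a bounded constant hinges precisely on the smallness imposed on $R_0$ in \eqref{eq:r_0}.
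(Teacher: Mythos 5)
Your proposal is correct and follows essentially the same strategy as the paper: establish $\mathcal N\geq 0$ via the two Hardy--Poincar\'e lemmas, observe $\nu_1\geq 0$ by Cauchy--Schwarz, bound $|\nu_2|$ in terms of $\mathcal N$ plus a bounded remainder, and conclude by an integrating-factor (Gronwall-type) monotonicity argument. Your differential inequality $\mathcal N'(r)\geq -C_1\mathcal N(r)-C_2 r$ is marginally sharper than the paper's $\mathcal N'(r)\geq -C_2(\mathcal N(r)+1)$, but this refinement does not change the conclusion or the method.
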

\begin{proof}
  From Lemmas \ref{l:hardy_poincare} and \ref{l:hardy_boundary} it
  follows that
\begin{equation*}
    D(r)\geq
 \Big[1-
4\|p\|_{L^\infty(B_R^+)}r^2-\pi\|q\|_{L^\infty(\Gamma_n^R)}r\Big]
\int_{B_{r}^+}|\nabla w|^2dz,
\end{equation*}
hence  there exist $\bar r\in (0,R_0)$ and $C_1>0$ such that
\[
D(r)\geq C_1  \int_{B_{r}^+}|\nabla w|^2dz,\quad\text{for all
}r\in(0,\bar r).
\]
In particular
\begin{equation}\label{eq:4}
\mathcal N(r)\geq 0,\quad\text{for all }r\in(0,\bar r).
\end{equation}
Moreover, using again Lemmas \ref{l:hardy_poincare} and
\ref{l:hardy_boundary} we can estimate $\nu_2$ in $(0,\bar r)$ as
follows
\begin{align}\label{eq:7}
|\nu_2(r)|&\leq \frac{\|q+ x q'\|_{L^\infty(\Gamma_n^R)}\pi r
\int_{B_{r}^+}|\nabla w|^2dz}{\int_{S_r^+}
  w^2\,ds}\\
\notag&\quad
+\frac{\|p\|_{L^{\infty}(B_R^+)}r(1+4r^2)\int_{B_r^+}|\nabla
  w(z)|^2\,dz}{\int_{S_r^+}w^2\,ds}+r\|p\|_{L^{\infty}(B_R^+)}\\
\notag&\leq \frac1{C_1}\left(\|q+ x
q'\|_{L^\infty(\Gamma_n^R)}\pi+\|p\|_{L^{\infty}(B_R^+)}(1+4\bar
r^2)\right)\mathcal N(r) +\bar r\|p\|_{L^{\infty}(B_R^+)}.
\end{align}
Since $\nu_1\geq 0$ by Schwarz's inequality, from Lemma \ref{mono}
and the above estimate it follows that there exists $C_2>0$ such
that
\begin{equation}\label{eq:8}
\mathcal N'(r)\geq -C_2(\mathcal N(r)+1)\quad\text{for all
}r\in(0,\bar r),
\end{equation}
which implies that
\[
\frac{d}{dr}\left(e^{C_2r}(1+\mathcal N(r))\right)\geq0.
\]
It follows that  the limit of $r\mapsto e^{C_2r}(1+\mathcal
N(r))$ as $r\to0^+$  exists and is finite; hence the function
$\mathcal N$ has a finite limit $\gamma$ as $r\to 0^+$. From
\eqref{eq:4} we deduce that $\gamma\geq 0$.
\end{proof}

\noindent The function $H$ defined in \eqref{H(r)} can be
estimated as follows.
\begin{Lemma}\label{l:uppb}
  Let
  $\gamma:=\lim_{r\rightarrow 0^+} {\mathcal N}(r)$ be as in Lemma
  \ref{l:limitN}.  Then
\begin{equation} \label{1stest}
H(r)=O(r^{2\gamma})  \quad \text{as } r\to0^+.
\end{equation}
Moreover, for any $\sigma>0$,
\begin{equation} \label{2ndest}
 r^{2\gamma+\sigma} =O(H(r)) \quad \text{as } r\to0^+.
\end{equation}
\end{Lemma}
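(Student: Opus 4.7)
The plan is to exploit the logarithmic-derivative identity
$\frac{d}{dr}\log H(r)=\frac{H'(r)}{H(r)}=\frac{2\mathcal N(r)}{r}$,
which follows from combining \eqref{H'2} with the definition \eqref{N(r)} of $\mathcal N$. Integrating this ODE on an interval $[r,r_1]\subset(0,R_0)$ yields
\[
\log\frac{H(r_1)}{H(r)}=\int_r^{r_1}\frac{2\mathcal N(s)}{s}\,ds,
\]
so that both estimates on $H$ will follow immediately from suitable pointwise bounds on $\mathcal N(s)$ for $s$ near $0$, which are already essentially contained in the proof of Lemma \ref{l:limitN}.

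For the first estimate \eqref{1stest}, I need a \emph{lower} bound on $\mathcal N$ close to $0$. Recalling \eqref{eq:8} from the proof of Lemma \ref{l:limitN}, the function $r\mapsto e^{C_2 r}(1+\mathcal N(r))$ is non-decreasing on $(0,\bar r)$ and converges to $1+\gamma$ as $r\to 0^+$; hence $e^{C_2 r}(1+\mathcal N(r))\geq 1+\gamma$ for every $r\in(0,\bar r)$, which, after a Taylor expansion of $e^{-C_2r}$, gives
\[
\mathcal N(r)\geq \gamma-C_3\,r\quad\text{for some }C_3>0\text{ and every }r\in(0,\bar r).
\]
Inserting this into the integral identity above, I obtain $\log(H(r_1)/H(r))\geq 2\gamma\log(r_1/r)-2C_3(r_1-r)$, hence $H(r)\leq H(r_1)\,e^{2C_3 r_1}(r/r_1)^{2\gamma}$, which is precisely \eqref{1stest}.

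For the second estimate \eqref{2ndest}, I use instead the \emph{upper} bound on $\mathcal N$ coming directly from the convergence $\mathcal N(r)\to\gamma$: for any given $\sigma>0$, choosing $\varepsilon=\sigma/2$, there exists $r_\varepsilon\in(0,\bar r)$ such that $\mathcal N(r)<\gamma+\varepsilon$ for every $r\in(0,r_\varepsilon)$. Integrating the logarithmic identity on $[r,r_\varepsilon]$ gives
\[
\log\frac{H(r_\varepsilon)}{H(r)}\leq (2\gamma+2\varepsilon)\log\frac{r_\varepsilon}{r},
\]
i.e. $H(r)\geq H(r_\varepsilon)\,r_\varepsilon^{-(2\gamma+\sigma)}\,r^{2\gamma+\sigma}$ for every $r\in(0,r_\varepsilon)$, which is \eqref{2ndest}.

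There is no genuine obstacle here; the only point requiring minor care is deriving the pointwise lower bound $\mathcal N(r)\geq\gamma-C_3 r$ from the differential inequality \eqref{eq:8}, and verifying that the positivity of $H$ on $(0,R_0)$ granted by Lemma \ref{welld} legitimizes taking $\log H$ and dividing by $H$ throughout.
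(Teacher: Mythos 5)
Your proof is correct and follows essentially the same strategy as the paper: both integrate the identity $H'(r)/H(r)=2\mathcal N(r)/r$ together with a lower bound $\mathcal N(r)\geq\gamma-C_3 r$ (for \eqref{1stest}) and the upper bound $\mathcal N(r)<\gamma+\sigma/2$ near $0$ (for \eqref{2ndest}). The only cosmetic difference is in deriving the lower bound on $\mathcal N$: you invoke the monotonicity of $e^{C_2 r}(1+\mathcal N(r))$ and its limit $1+\gamma$, while the paper observes that boundedness of $\mathcal N$ plus \eqref{eq:8} gives $\mathcal N'\geq -C_3$ and then integrates from $0$ to $r$; these are two equivalent ways of extracting the same estimate from the same differential inequality.
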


\begin{proof}
From Lemma \ref{l:limitN} we have that
\begin{equation}
  \label{eq:9}
\text{$\mathcal N$ is bounded in a neighborhood of $0$},
\end{equation}
hence from \eqref{eq:8} it follows that $\mathcal N'\geq -C_3$ for
some positive constant $C_3$  in a neighborhood of $0$. Then
\begin{equation} \label{qsopra}
{\mathcal N}(r)-\gamma=\int_0^r
    {\mathcal N}'(\rho) \, d\rho\geq -C_3 r
\end{equation}
 in a neighborhood of
$0$. From (\ref{H'2}), \eqref{N(r)}, and (\ref{qsopra}) we deduce
that,  in a neighborhood of $0$,
$$
\frac{H'(r)}{H(r)}=\frac{2\,{\mathcal N}(r)}{r}\geq
\frac{2\gamma}{r}-2C_3,
$$
which, after integration, yields (\ref{1stest}).

Since $\gamma=\lim_{r\rightarrow 0^+} {\mathcal N}(r)$, for any
$\sigma>0$ there exists $r_\sigma>0$ such that ${\mathcal
N}(r)<\gamma+\sigma/2$ for any $r\in (0,r_\sigma)$ and hence
$\frac{H'(r)}{H(r)}=\frac{2\,{\mathcal
    N}(r)}{r}<\frac{2\gamma+\sigma}{r}$ for all $r\in (0,r_\sigma)$.
By integration we obtain (\ref{2ndest}).
\end{proof}

\section{Blow-up analysis for the auxiliary problem}\label{sec:blow-up-analysis}
\begin{Lemma}\label{l:blowup}
 Let $w\in H^1(B_R^+)$ be a non trivial solution to \eqref{eq:5}.
 Let $\gamma:=\lim_{r\rightarrow 0^+} {\mathcal
    N}(r)$ be as in Lemma \ref{l:limitN}. Then
there exists $k_0\in \N$, $k_0\geq1$, such that
\[
\gamma=\frac{2k_0-1}{2}.
\]
Furthermore, for every sequence $\tau_n\to0^+$, there exist a
subsequence $\{\tau_{n_k}\}_{k\in\N}$   such that
\[
\frac{w(\tau_{n_k}z)}{\sqrt{H(\tau_{n_k})}}\to \widetilde w(z)
\]
 strongly in $H^1(B_r^+)$ and in  $C^{0,\mu}_{\rm
    loc}(\overline{B_r^+}\setminus\{0\})$  for every $\mu\in (0,1)$
  and all $r\in(0,1)$,
where
\begin{equation}\label{eq:11}
\widetilde w(r\cos t,r\sin t)= \pm\sqrt{\frac2\pi}\,
r^{\frac{2k_0-1}2} \cos\bigg(\frac{2k_0-1}{2}t\bigg) ,  \quad
\text{for all }r\in (0,1)\text{ and }t\in[0,\pi].
\end{equation}
\end{Lemma}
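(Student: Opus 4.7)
The plan is to follow the classical Almgren blow-up scheme: define the rescaled, normalized family $w^\tau(z):=w(\tau z)/\sqrt{H(\tau)}$ for $\tau\in(0,R_0)$, extract a subsequential limit $\widetilde w$, identify $\widetilde w$ as a homogeneous harmonic function on the half-disc satisfying mixed Dirichlet/Neumann conditions on the axis, and read off the homogeneity exponent from the eigenvalue problem \eqref{eq:2}. Note that by construction $\int_0^\pi (w^\tau(\cos t,\sin t))^2\,dt=1$, and a direct change of variables together with the definition of $D$ and $H$ gives
\begin{equation*}
\int_{B_1^+}|\nabla w^\tau|^2\,dz-\tau^2\!\int_{B_1^+}p(\tau z)(w^\tau)^2dz-\tau\!\int_0^1 q(\tau x_1)(w^\tau(x_1,0))^2dx_1=\mathcal N(\tau).
\end{equation*}

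First I would use Lemma \ref{l:limitN} to bound $\mathcal N(\tau)$ uniformly in $\tau$, then combine with Lemmas \ref{l:hardy_poincare}--\ref{l:hardy_boundary} applied to $w^\tau$ (to absorb the $p$ and $q$ terms on the left) to conclude $\{w^\tau\}$ is bounded in $H^1(B_1^+)$. Thus, up to a subsequence, $w^\tau\rightharpoonup\widetilde w$ in $H^1(B_1^+)$, strongly in $L^2(B_1^+)$, and the trace on $\Gamma_n^1\cup\Gamma_d^1$ converges in $L^2$; in particular $\widetilde w$ vanishes on $\Gamma_d^1$ and $\int_0^\pi \widetilde w^2(\cos t,\sin t)\,dt=1$, so $\widetilde w\not\equiv 0$. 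Rescaling \eqref{eq:5} shows $w^\tau$ weakly solves $-\Delta w^\tau=\tau^2 p(\tau\cdot)w^\tau$ in $B_1^+$, $\partial_\nu w^\tau=\tau q(\tau\cdot)w^\tau$ on $\Gamma_n^1$, with potentials that vanish in the limit since $p\in L^\infty$ and $q\in C^1$. Hence $\widetilde w$ weakly satisfies $\Delta\widetilde w=0$ in $B_1^+$, $\partial_\nu\widetilde w=0$ on $\Gamma_n^1$, $\widetilde w=0$ on $\Gamma_d^1$.

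The crucial step is to show that the Almgren quotient of $\widetilde w$ is constantly equal to $\gamma$. Define $\widetilde H(r)=\int_0^\pi \widetilde w^2(r\cos t,r\sin t)\,dt$ and $\widetilde D(r)=\int_{B_r^+}|\nabla\widetilde w|^2dz$; scaling invariance gives $H_{w^\tau}(r)=H(\tau r)/H(\tau)$ and $D_{w^\tau}(r)=D(\tau r)/H(\tau)$, so $\mathcal N_{w^\tau}(r)=\mathcal N(\tau r)\to\gamma$ as $\tau\to 0^+$ for every $r\in(0,1)$. Strong $L^2$ convergence on each circle (via e.g. Fatou combined with the Pohozaev identity of Theorem \ref{t:pohozaev} to rule out loss of energy) yields $\widetilde H(r)=\lim H_{w^\tau}(r)$ and, together with weak lower semicontinuity plus the identity $\mathcal N(\tau r)\to\gamma$, forces $\widetilde D(r)/\widetilde H(r)\equiv\gamma$. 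Applying Lemma \ref{mono} to $\widetilde w$ (whose potentials vanish, so $\nu_2\equiv 0$) yields $\nu_1\equiv 0$, i.e. equality in the Cauchy--Schwarz bound $\bigl(\int_{S_r^+}\widetilde w\,\partial_\nu\widetilde w\bigr)^2\leq\bigl(\int_{S_r^+}|\partial_\nu\widetilde w|^2\bigr)\bigl(\int_{S_r^+}\widetilde w^2\bigr)$, which means $\partial_\nu\widetilde w=\lambda(r)\widetilde w$ on each $S_r^+$.

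Separating variables, this forces $\widetilde w(r\cos t,r\sin t)=r^\gamma \psi(t)$ with $\psi\in H^1(0,\pi)$, $\psi(\pi)=0$, and, using that $\widetilde w$ is harmonic in $B_1^+$ with $\partial_{x_2}\widetilde w=0$ on $\Gamma_n^1$, we obtain $-\psi''=\gamma^2\psi$, $\psi'(0)=0$, $\psi(\pi)=0$. Matching with \eqref{eq:2} gives $\gamma^2=\lambda_{k_0}=\frac{(2k_0-1)^2}{4}$ for some $k_0\geq 1$, so $\gamma=\frac{2k_0-1}{2}$; the normalization $\int_0^\pi\psi^2(t)\,dt=1$ together with \eqref{eq:17} then pins down $\psi(t)=\pm\sqrt{2/\pi}\cos\bigl(\frac{2k_0-1}{2}t\bigr)$, which is \eqref{eq:11}. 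To upgrade weak $H^1$ convergence to strong, I would use that $\|\nabla w^\tau\|_{L^2(B_1^+)}^2=\mathcal N(\tau)+o(1)\to\gamma=\widetilde D(1)=\|\nabla\widetilde w\|_{L^2(B_1^+)}^2$; combined with weak convergence this gives norm convergence, hence strong convergence in $H^1(B_r^+)$ for every $r\in(0,1)$. Finally, since the rescaled potentials are uniformly small, standard elliptic regularity up to the portion of the boundary away from $0$ gives uniform $C^{0,\mu}$ bounds on $w^\tau$ on compact subsets of $\overline{B_r^+}\setminus\{0\}$, yielding the claimed $C^{0,\mu}_{\rm loc}$ convergence. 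The main obstacle is the passage from constancy of the Almgren quotient of the limit to the separation-of-variables structure, which requires delicate use of the equality case in Cauchy--Schwarz within the monotonicity formula and careful justification that no mass is lost in the limit.
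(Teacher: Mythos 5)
Your proposal follows the same blow-up scheme as the paper, and most of the ingredients (uniform $H^1$-bound via Hardy--Poincar\'e plus the bounded Almgren quotient, extraction of a weak limit $\widetilde w$, identification of the limit equation, the observation that $\mathcal N_{w^\tau}(r)=\mathcal N(\tau r)\to\gamma$, equality in Cauchy--Schwarz to get separation of variables, and matching with the eigenvalue problem \eqref{eq:2}) are correct and in the right order.

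However, there is a genuine gap in the crucial upgrade from weak to strong $H^1$ convergence, and your stated justification is circular. Weak lower semicontinuity of the Dirichlet energy only gives $\widetilde D(r)\leq\liminf D_{w^\tau}(r)=\gamma\widetilde H(r)$, i.e.\ $\widetilde{\mathcal N}(r)\leq\gamma$; it does not force the equality $\widetilde{\mathcal N}(r)\equiv\gamma$ that you assert. You then invoke precisely this (unproved) equality, in the form $\widetilde D(1)=\gamma$, to conclude norm convergence $\|\nabla w^\tau\|_{L^2(B_1^+)}\to\|\nabla\widetilde w\|_{L^2(B_1^+)}$ and hence strong convergence --- so the logic runs in a circle. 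The parenthetical ``Fatou combined with the Pohozaev identity to rule out loss of energy'' does not repair this: Pohozaev is used in this paper to differentiate $D$ and $H$, not to exclude concentration. The paper closes this gap differently and cleanly: interior/boundary elliptic regularity gives $H^2$-bounds for $w^\tau$ on annuli $B_{r_2}^+\setminus\overline{B_{r_1}^+}$; compact trace embedding then yields $\partial_\nu w^{\tau_{n_k}}\to\partial_\nu\widetilde w$ in $L^2(S_r^+)$; testing the rescaled equation with $w^{\tau_{n_k}}$ on $B_r^+$ and passing to the limit shows
\[
\int_{B_r^+}|\nabla w^{\tau_{n_k}}|^2\,dz\ \longrightarrow\ \int_{S_r^+}\widetilde w\,\partial_\nu\widetilde w\,ds=\int_{B_r^+}|\nabla\widetilde w|^2\,dz,
\]
which gives convergence of norms and therefore strong $H^1(B_r^+)$ convergence. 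Only after that does one obtain $\widetilde D(r)=\lim D_k(r)$, hence the constancy $\widetilde{\mathcal N}\equiv\gamma$, and then the separation-of-variables argument proceeds as you describe. You should insert this elliptic-regularity/trace-compactness/energy-testing step before invoking constancy of $\widetilde{\mathcal N}$; as written the deduction does not close.

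A smaller point: you write $\widetilde w=r^\gamma\psi(t)$ directly after separating variables, but strictly speaking the Cauchy--Schwarz equality gives $\widetilde w=\varphi(r)\psi(t)$ with some radial factor $\varphi$. The PDE then forces $\varphi(r)=c_1r^{(2k_0-1)/2}+c_2r^{-(2k_0-1)/2}$; the $H^1$ requirement kills $c_2$, and $\gamma=\tfrac{2k_0-1}{2}$ is obtained \emph{afterwards} by computing the constant Almgren quotient of the explicit $\widetilde w$. This ordering matters because $\gamma$ is the unknown being identified, not an input.
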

\begin{proof}
Let us set
\begin{equation}\label{eq:wtau}
w^\tau(z)=\frac{w(\tau z)}{\sqrt{H(\tau)}}.
\end{equation}
We notice that, for all $\tau\in (0,R)$, $w^\tau\in \mathcal H_1$
and $\int_{S_1^+}|w^{\tau}|^2ds=\int_0^\pi |w^\tau(\cos t,\sin
t)|^2\,dt=1$. Moreover, by scaling and \eqref{eq:9},
\begin{equation}\label{eq:8bis}
  \int_{B_{1}^+}\left( |\nabla w^\tau(z)|^2 -
\tau^{2} p(\tau z)|w^\tau(z) |^2\right)\,dz -\tau\int_0^1 q(\tau
x) |w^\tau(x,0) |^2\,dx={\mathcal N}(\tau)=O(1)
\end{equation}
as $\tau\to 0^+$, whereas from Lemmas \ref{l:hardy_poincare} and
\ref{l:hardy_boundary} it follows that
\begin{align}\label{eq:16}
  \mathcal N(\tau)&\geq \frac{1}{H(\tau)}
 \Big[1-
4\|p\|_{L^\infty(B_R^+)}\tau^2-\pi\|q\|_{L^\infty(\Gamma_n^R)}\tau\Big] \int_{B_{\tau}^+}|\nabla w|^2dz\\
\notag&= \Big[1-
4\|p\|_{L^\infty(B_R^+)}\tau^2-\pi\|q\|_{L^\infty(\Gamma_n^R)}\tau\Big]
\int_{B_{1}^+}|\nabla w^\tau|^2dz
\end{align}
for every $\tau\in(0,R_0)$, being $R_0$ as in \eqref{eq:r_0}. From
\eqref{eq:8bis}, \eqref{eq:16}, and Lemma \ref{l:hardy_poincare}
we deduce that
\begin{equation}\label{eq:19}
  \{w^\tau\}_{\tau\in(0,R_0)}\quad\text{is bounded in }H^1(B_1^+).
\end{equation}
Therefore, for any given sequence $\tau_n\to 0^+$, there exists a
subsequence $\tau_{n_k}\to0^+$ such that $w^{\tau _{n_k}}\weakly
\widetilde w$ weakly in $H^1(B_1^+)$ for some $\widetilde w\in
H^1(B_1^+)$.
 Due to compactness of trace embeddings, we
 have that $\widetilde w=0$ on $\Gamma_d^1$ and
 \begin{equation}\label{eq:10}
\int_{S_1^+}|\widetilde w|^2ds=1.
\end{equation}
 In particular $\widetilde w\not\equiv
0$. For every small  $\tau\in (0,R_0)$, $w^\tau$ satisfies
\begin{equation}\label{eqlam}
\begin{cases}
  -\Delta   w^\tau =\tau^2p(\tau z)w^\tau ,&\text{in }B_1^+,\\
\partial_\nu w^\tau  =\tau q(\tau x_1,0) w^\tau  ,&\text{on }\Gamma^1_n,\\
 w^\tau=0,&\text{on }\Gamma^1_d,
  \end{cases}
\end{equation}
in a weak sense, i.e.
\[
 \int_{B_1^+} \nabla w^\tau(z)\cdot\nabla \varphi(z)\,dz=
  \tau^2
\int_{B_1^+} p(\tau z)w^\tau(z)\varphi(z)\,dz+
 \tau\int_0^1 q(\tau x)w^\tau(x,0)\varphi(x,0)\,dx
\]
for all $\varphi \in H^1(B_1^+)$ s.t. $\varphi=0$ on
$S_1^+\cup\Gamma_d^1$. From weak convergence
 $w^{\tau _{n_k}}\weakly
\widetilde w$ in $H^1(B_1^+)$, we can pass to the limit in
\eqref{eqlam} along the sequence $\tau _{n_k}$ and obtain  that
$\widetilde w$ weakly solves
\begin{equation}\label{eq:extended_limit}
\begin{cases}
  -\Delta   \widetilde w=0,&\text{in }B_1^+,\\
\partial_\nu \widetilde w =0,&\text{on }\Gamma^1_n,\\
 \widetilde w=0,&\text{on }\Gamma^1_d.
  \end{cases}
\end{equation}
From \eqref{eq:19} it follows that $\{\tau q(\tau x)
w^\tau(x,0)\}_{\tau\in(0,R_0)}$ is bounded in
$H^{1/2}(\Gamma^1_n)$. Then, by elliptic regularity theory,
for every $0<r_1<r_2<1$ we have that $\{w^\tau\}_{\tau\in(0,R_0)}$
is bounded in $H^2(B^+_{r_2}\setminus \overline{B^+_{r_2}})$. From
compactness  of trace embeddings we have that, up to passing to a
further subsequence, $\frac{\partial
  w^{\tau_{n_k}}}{\partial\nu}\to \frac{\partial
  \widetilde w}{\partial\nu}$ in $L^2(S_r^+)$  for every $r\in
(0,1)$. Testing equation \eqref{eqlam} for $\tau=\tau_{n_k}$ with
$w^{\tau}$ on $B_r^+$ we obtain that
\begin{align*}
 \int_{B_r^+} |\nabla w^{\tau_{n_k}}(z)|^2\,dz&=
\int_{S_r^+}\frac{\partial
  w^{\tau_{n_k}}}{\partial\nu}w^{\tau_{n_k}}\,ds\\
&\qquad+  \tau_{n_k}^2 \int_{B_r^+} p(\tau_{n_k}
z)|w^{\tau_{n_k}}(z)|^2\,dz+
 \tau_{n_k}\int_0^r q(\tau_{n_k} x)|w^{\tau_{n_k}}(x,0)|^2\,dx\\
&\mathop{\rightarrow}\limits_{k\to+\infty}\int_{S_r^+}\frac{\partial
  \widetilde w}{\partial\nu}\widetilde w\,ds=\int_{B_r^+} |\nabla \widetilde w (z)|^2\,dz,
\end{align*}
thus proving that $\|w^{\tau _{n_k}}\|_{H^1(B_r^+)}\to
\|\widetilde w\|_{H^1(B_r^+)}$ for all $r\in(0,1)$, and hence
\begin{equation}\label{strongH1}
w^{\tau _{n_k}}\to \widetilde w\quad\text{in }H^1(B_r^+)
\end{equation}
for every $r\in (0,1)$. Furthermore, by compact Sobolev
embeddings, we also have that, up to extracting a further subsequence,
\begin{equation*}
  w^{\tau _{n_k}}\to
  \widetilde w
\quad\text{in }C^{0,\mu}_{\rm
loc}(\overline{B_r^+}\setminus\{0\}),
\end{equation*}
for every $r\in (0,1)$ and $\mu\in(0,1)$.

For any $r\in (0,1)$ and $k\in \N$, let us define the functions
\begin{align*}
&D_k(r)=\int_{B_r^+} |\nabla w^{\tau_{n_k}}|^2\,dz-
\tau_{n_k}^2\int_{B_r^+} p(\tau_{n_k} z)|w^{\tau_{n_k}}(z)|^2dz
 -\tau_{n_k}\int_0^r q(\tau_{n_k} x)|w^{\tau_{n_k}} (x,0)|^2\,dx,\\
&H_k(r)=\frac{1}{r}\int_{S_r^+}|w^{\tau_{n_k}}|^2 \, ds,
\end{align*}
and ${\mathcal
    N}_k(r):=\frac{D_k(r)}{H_k(r)}$.
Direct calculations yield that ${\mathcal
    N}_k(r)={\mathcal N}(\tau_{n_k}r)$ for all $r\in (0,1)$.
From (\ref{strongH1}) it follows that, for any fixed $r\in (0,1)$,
\begin{equation*}
 D_k(r)\to \widetilde D(r):=\int_{B_r^+}
|\nabla \widetilde w|^2\,dz\quad\text{and}\quad H_k(r)\to
\widetilde D(r):= \frac{1}{r}\int_{S_r^+}|\widetilde w|^2 \, ds.
\end{equation*}
From classical unique continuation principles for harmonic
functions it follows that $\widetilde D(r)>0$ and $\widetilde
H(r)>0$ for all $r\in(0,1)$ (indeed $\widetilde D(r)=0$ or
$\widetilde H(r)=0$ for some $r\in(0,1)$ would imply that
$\widetilde w \equiv 0$ in $B_r^+$ and, by unique continuation,
$\widetilde w \equiv 0$ in $B_1^+$, a contradiction). Hence, by
Lemma \ref{l:limitN},
\begin{equation}\label{Nw(r)}
\widetilde {\mathcal
    N}(r)=\frac{\widetilde D(r)}{\widetilde H(r)}=\lim_{k\to \infty}{\mathcal
    N}_k(r) =\lim_{k\to \infty} {\mathcal
    N}(\tau_{n_k}r)=\gamma
\end{equation}
for all $r\in (0,1)$. Therefore $\widetilde {\mathcal N}$ is
constant in $(0,1)$ and hence $\widetilde {\mathcal
  N}'(r)=0$ for any $r\in (0,1)$.  By \eqref{eq:extended_limit} and Lemma
\ref{mono} with $p\equiv 0$ and $q\equiv0$, we obtain
\begin{equation*}
\left(\int_{S_r^+}
  \left|\frac{\partial \widetilde w}{\partial \nu}\right|^2 ds\right) \cdot
    \left(\int_{S_r^+}
  \widetilde w^2\,ds\right)-\left(
\int_{S_r^+}
  \widetilde w\frac{\partial \widetilde w}{\partial \nu}\, ds\right)^{\!2} =0
\quad
  \text{for all } r\in (0,1),
\end{equation*}
which  implies  that $\widetilde w$ and $\frac{\partial \widetilde
w}{\partial \nu}$ are parallel as vectors in $L^2(S_r^+)$. Hence
there exists $\eta=\eta(r)$ such that $\frac{\partial \widetilde
w}{\partial \nu}(r\cos t,r\sin t)=\eta(r) \widetilde w(r\cos
t,r\sin t)$ for all $r\in(0,1)$ and $t\in[0,\pi]$. It follows that
\begin{equation} \label{separate}
\widetilde w(r\cos t,r\sin t) =\varphi(r) \psi(t), \quad
r\in(0,1), \ t\in [0,\pi],
\end{equation}
where $\varphi(r)=e^{\int_1^r \eta(s)ds}$ and $\psi(t)=\widetilde
w(\cos t,\sin t)$. From \eqref{eq:10} we have that $\int_0^\pi
\psi^2=1$. From \eqref{eq:extended_limit} and  (\ref{separate}) we
can conclude that
\[
\begin{cases}
\varphi''(r)\psi(t)+
\frac1r\varphi'(r)\psi(t)+\frac1{r^2}\varphi(r)\psi''(t)=0,&r\in(0,1),\quad
t\in[0,1],\\
\psi(\pi)=0,\\
\psi'(0)=0.
\end{cases}
\]
Taking $r$ fixed, we deduce that $\psi$ is necessarily an
eigenfunction of the eigenvalue problem \eqref{eq:2}. Then there
exists $k_0\in\N\setminus\{0\}$ such that
$\psi(t)=\pm\sqrt{\frac2\pi}\cos(\frac{2k_0-1}{2}t)$ and $\varphi(r)$
solves the equation
\[
\varphi''(r)+\frac{1}r\varphi'-\frac{(2k_0-1)^2}{4r^2}\varphi(r)=0.
\]
Hence $\varphi(r)$ is of the form
\[
\varphi(r)=c_1 r^{\frac{2k_0-1}2}+c_2 r^{-\frac{2k_0-1}2}
\]
for some $c_1,c_2\in\R$. Since the function
$r^{-\frac{2k_0-1}2}\psi(t)\notin H^1(B_1^+)$, we deduce that
necessarily $c_2=0$ and  $\varphi(r)=c_1 r^{\frac{2k_0-1}2}$.
Moreover, from $\varphi(1)=1$, we obtain that $c_1=1$ and then
\begin{equation} \label{expw}
\widetilde w(r\cos t,r\sin t)= \pm\sqrt{\frac2\pi}\,
r^{\frac{2k_0-1}2} \cos\bigg(\frac{2k_0-1}{2}t\bigg) ,  \quad
\text{for all }r\in (0,1)\text{ and }t\in[0,\pi].
\end{equation}
From \eqref{expw} it follows that
\[
  \widetilde  H(r)=\int_0^\pi \widetilde w^2(r\cos t,r\sin
  t)\,dt=r^{2k_0-1}.
\]
Hence, in view of   \eqref{H'2},
\[
\gamma=\widetilde {\mathcal N}(r)=\frac r2 \frac{\widetilde
H'(r)}{\widetilde
  H(r)}=\frac
r2(2k_0-1)\frac{r^{2k_0-2}}{r^{2k_0-1}}=\frac{2k_0-1}{2}.
\]
The proof of the lemma is thereby complete.
\end{proof}

\begin{Lemma} \label{l:limite}
Let $w\not\equiv 0$ satisfy \eqref{eq:5}, $H$ be defined in \eqref{H(r)}, and
$\gamma:=\lim_{r\rightarrow 0^+} {\mathcal
    N}(r)$ be as in Lemma \ref{l:limitN}. Then the limit
$\lim_{r\to0^+}r^{-2\gamma}H(r)$ exists and it is finite.
\end{Lemma}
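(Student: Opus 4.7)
The plan is to study the monotonicity of the auxiliary function $\Phi(r):=r^{-2\gamma}H(r)$ on $(0,R_0)$ and then argue by ``non-decreasing plus bounded above''. First I would differentiate $\log\Phi$: by \eqref{H'2},
\[
(\log\Phi(r))'=\frac{H'(r)}{H(r)}-\frac{2\gamma}{r}=\frac{2}{r}\bigl(\mathcal N(r)-\gamma\bigr)
\]
a.e.\ on $(0,R_0)$, so the sign of $\mathcal N(r)-\gamma$ near $0$ controls the behaviour of $\Phi$.

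Next I would establish the one-sided rate $\mathcal N(r)-\gamma\geq -C_3 r$ in a right neighborhood of $0$. This is essentially already in hand: inequality \eqref{eq:8}, derived in the proof of Lemma \ref{l:limitN}, gives $\mathcal N'(r)\geq -C_2(\mathcal N(r)+1)$ on $(0,\bar r)$, which makes $r\mapsto e^{C_2 r}(1+\mathcal N(r))$ non-decreasing there. Since by Lemma \ref{l:limitN} its right limit at $0$ equals $1+\gamma$, we obtain $e^{C_2 r}(1+\mathcal N(r))\geq 1+\gamma$, whence $\mathcal N(r)\geq (1+\gamma)e^{-C_2 r}-1\geq\gamma-C_3 r$ for some $C_3>0$ and all sufficiently small $r$.

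Combining the two previous steps yields $(\log\Phi)'(r)\geq -2C_3$ a.e.\ on $(0,\bar r)$, and therefore $r\mapsto\log\Phi(r)+2C_3 r$ is non-decreasing on this interval. By estimate \eqref{1stest} of Lemma \ref{l:uppb}, $\Phi$ is bounded above, so $\log\Phi(r)+2C_3 r$ is bounded above. A non-decreasing function bounded above admits a one-sided limit at each endpoint; hence $L:=\lim_{r\to 0^+}[\log\Phi(r)+2C_3 r]$ exists in $[-\infty,+\infty)$. Since $2C_3 r\to 0$ as $r\to 0^+$, we also have $\lim_{r\to 0^+}\log\Phi(r)=L$, and therefore $\lim_{r\to 0^+}\Phi(r)=e^L\in[0,+\infty)$, with the convention $e^{-\infty}=0$. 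Existence and finiteness of the limit are thus established.

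The only non-routine ingredient is the linear one-sided bound $\mathcal N(r)-\gamma\geq -Cr$, which I expect to be the main obstacle. Fortunately it is a direct consequence of the Gronwall-type inequality obtained via the control of $|\nu_2|$ by $\mathcal N+1$ in \eqref{eq:7}, already carried out in proving Lemma \ref{l:limitN}, so no new monotonicity-formula input is required.
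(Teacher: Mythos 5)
Your proof is correct, and it takes a genuinely different route from the paper's. The paper differentiates $r^{-2\gamma}H(r)$ directly, writes the derivative through $\mathcal N(r)-\gamma=\int_0^r\mathcal N'(\rho)\,d\rho$, and splits $\mathcal N'=\nu_1+\nu_2$ according to Lemma \ref{mono}: after integrating from $r$ to $R_0$, the $\nu_1$ contribution in \eqref{inte} is monotone in $r$ (since $\nu_1\geq 0$ by Cauchy--Schwarz) and the $\nu_2$ contribution is absolutely convergent near $0$ (using the bound on $\nu_2$ from \eqref{eq:7} and \eqref{1stest}), so each piece admits a limit as $r\to 0^+$, and hence so does $r^{-2\gamma}H(r)$. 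You instead pass to the logarithmic derivative of $\Phi(r):=r^{-2\gamma}H(r)$ and feed in the coarse one-sided rate $\mathcal N(r)-\gamma\geq -C_3 r$, which is precisely \eqref{qsopra} from the proof of Lemma \ref{l:uppb}, to conclude $(\log\Phi)'\geq -2C_3$ and hence that $\log\Phi(r)+2C_3 r$ is non-decreasing near $0$. Both arguments ultimately rest on the same two facts, $\nu_1\geq 0$ and the control of $|\nu_2|$ by $\mathcal N+1$; yours folds them once into the Gronwall inequality \eqref{eq:8} and then works with $\log\Phi$, bypassing the integral identity \eqref{inte} and the $\nu_1/\nu_2$ splitting entirely, which is a bit more economical. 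One small remark: the appeal to \eqref{1stest} to conclude that $\log\Phi(r)+2C_3 r$ is bounded above is superfluous for the existence of the limit at $0^+$, since the infimum of a real-valued non-decreasing function always exists in $[-\infty,+\infty)$; but this is cosmetic and does not affect correctness.
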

\begin{proof}
In view of (\ref{1stest}) it is sufficient to prove that the limit
exists. By (\ref{H(r)}), (\ref{H'2}), and Lemma~\ref{l:limitN} we
have that
\[
\frac{d}{dr} \frac{H(r)}{r^{2\gamma}} =2r^{-2\gamma-1}
(D(r)-\gamma H(r))=2r^{-2\gamma-1} H(r) \int_0^r {\mathcal
N}'(\rho) d\rho,
\]
and then, by integration over $(r,R_0)$,
\begin{equation}\label{inte}
  \frac{H(R_0)}{R_0^{2\gamma}}-
  \frac{H(r)}{r^{2\gamma}}=2\int_r^{R_0}
  \frac{H(\rho)}{\rho^{2\gamma+1}} \left( \int_0^\rho \nu_1(t) dt \right) d\rho +2\int_r^{R_0}
  \frac{H(\rho)}{\rho^{2\gamma+1}} \left( \int_0^\rho \nu_2(t) dt \right) d\rho
\end{equation}
where $\nu_1$ and $\nu_2$ are as in (\ref{eq:nu1}) and
(\ref{eq:nu2}). Since, by Schwarz's inequality, $\nu_1\geq 0$, we
have that $\lim_{r\to 0^+} \int_r^{R_0}  \rho^{-2\gamma-1} H(\rho)
\left( \int_0^\rho
  \nu_1(t) dt \right) d\rho$
exists.  On the other hand, from Lemma \ref{l:limitN} $\mathcal N$
is bounded and hence from \eqref{eq:7} we deduce that $\nu_2$ is
bounded close to $0^+$. Hence, in view of
 \eqref{1stest}, the function $\rho\mapsto \rho^{-2\gamma-1} H(\rho)
 \left( \int_0^\rho \nu_2(t) dt \right)$ is bounded and hence
 integrable near $0$. We conclude that  both terms at the right hand side of
(\ref{inte}) admit a limit as $r\to 0^+$ thus completing the
proof.
\end{proof}

\noindent The following lemma provides some pointwise estimate for
solutions to \eqref{eq:5}.
\begin{Lemma}\label{l:stima}
 Let  $w\in H^1(B_R^+)$
be a nontrivial solution to  \eqref{eq:5}. Then there exist
$C_4,C_5>0$ and $\bar r\in(0,R_0)$ such that
\begin{enumerate}[\rm (i)]
\item $\sup_{S_{r}^+}|w|^2\leq
  \frac{C_4}{r}\int_{S_r^+}|w(z)|^2\,ds$ for every $0<r<\bar r$,
\item $|w(z)|\leq C_5|z|^\gamma$ for all $z\in B_{\bar r}^+$, with
$\gamma$ as in Lemma \ref{l:limitN}.
\end{enumerate}
\end{Lemma}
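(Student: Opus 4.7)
The plan is to reduce both estimates to a single uniform $L^\infty$ bound for the family of rescaled functions $w^\tau(z)=w(\tau z)/\sqrt{H(\tau)}$ introduced in \eqref{eq:wtau}. Once $\sup_{S_1^+}|w^\tau|\le C$ is established uniformly for small $\tau>0$, statement (i) follows immediately by scaling back: with $r=\tau$ one has $\sup_{S_\tau^+}|w|^2 = H(\tau)\sup_{S_1^+}|w^\tau|^2$, and $H(\tau)=\tfrac{1}{\tau}\int_{S_\tau^+}w^2\,ds$ by definition \eqref{H(r)}. Statement (ii) then follows by combining (i) with the upper bound $H(r)=O(r^{2\gamma})$ from Lemma \ref{l:uppb}: $\sup_{S_r^+}|w|^2 \le C_4 H(r) \le C\,r^{2\gamma}$, and every $z\in B_{\bar r}^+$ lies on the half-circle $S_{|z|}^+$.

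To obtain the uniform $L^\infty$ bound on $S_1^+$, I would first upgrade the $H^1(B_1^+)$ bound \eqref{eq:19} to a uniform bound of $w^\tau$ in $H^1(B_2^+)$, valid for all $\tau<R_0/2$. From \eqref{H'2} and Lemma \ref{l:limitN} one has $(\log H)'(r)=2\mathcal N(r)/r$ with $\mathcal N$ bounded near $0$, so integration from $\tau$ to $s\tau$ yields $H(s\tau)/H(\tau)\le C$ uniformly for $s\in[1/2,2]$ and $\tau$ sufficiently small. A change of variables then gives
\[
\int_{B_2^+}|w^\tau|^2\,dz=\int_0^2 s\,\frac{H(s\tau)}{H(\tau)}\,ds\le C,\qquad \int_{B_2^+}|\nabla w^\tau|^2\,dz=\frac{1}{H(\tau)}\int_{B_{2\tau}^+}|\nabla w|^2\,dy,
\]
and the latter is in turn controlled by $\mathcal N(2\tau)\,H(2\tau)/H(\tau)$ times a correction factor of the form $(1-C\tau-C'\tau^2)^{-1}$ produced by the Hardy--Poincar\'e inequalities of Section \ref{sec:hardy-poincare-type}, exactly as in the argument behind \eqref{eq:16}. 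This yields the desired uniform bound of $w^\tau$ in $H^1(B_2^+)$ and hence in $L^2(B_2^+)$.

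With this energy bound in hand, I would apply classical local boundedness of De~Giorgi--Nash--Moser type to the equation \eqref{eqlam} satisfied by $w^\tau$ on the annular region $A:=B_{3/2}^+\setminus\overline{B_{1/2}^+}$. Since $A$ stays uniformly away from the Dirichlet--Neumann junction at $0$, the Neumann portion $\Gamma_n^2\cap A$ and the Dirichlet portion $\Gamma_d^2\cap A$ are separated smooth arcs, and the coefficients $\tau^2 p(\tau\cdot)$ and $\tau q(\tau\cdot)$ are uniformly bounded (indeed they tend to $0$) as $\tau\to 0^+$. Standard Moser-type boundedness estimates for pure Neumann and pure Dirichlet conditions applied separately near each smooth portion of the boundary, together with interior boundedness inside $A$, yield $\sup_{B_{5/4}^+\setminus B_{3/4}^+}|w^\tau|\le C\|w^\tau\|_{L^2(B_2^+)}\le C'$ uniformly in $\tau$, and in particular $\sup_{S_1^+}|w^\tau|\le C'$. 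Rescaling back delivers (i), and then (ii) follows as above. The main technical obstacle is the uniform $H^1(B_2^+)$ bound, which requires absorbing the potential terms $p$ and $q$ via the Hardy--Poincar\'e inequalities and is only available for $\tau$ sufficiently small; once this energy estimate is secured, the elliptic regularity step is routine because $A$ avoids the Dirichlet--Neumann junction and thus sees a classical mixed boundary-value problem on a smooth domain with separated Dirichlet and Neumann portions.
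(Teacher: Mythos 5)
Your proof is correct, but it takes a genuinely different route from the paper. The paper establishes (i) by contradiction, invoking the blow-up Lemma~\ref{l:blowup} directly: if (i) failed along a sequence $\tau_n\to 0^+$, then $\sup_{S_{1/2}^+}|w^{\tau_n}|^2 > 2n\int_{S_{1/2}^+}|w^{\tau_n}|^2\,ds$, whereas along a subsequence $w^{\tau_{n_k}}\to\widetilde w$ in $C^0(S_{1/2}^+)$ with $\widetilde w$ bounded and nontrivial, so the left side stays bounded while the right side diverges --- a contradiction. You instead prove the uniform $L^\infty$ bound $\sup_{S_1^+}|w^\tau|\le C$ directly: the doubling estimate coming from the boundedness of $\mathcal N$ and \eqref{H'2} upgrades \eqref{eq:19} to a uniform $H^1(B_2^+)$ bound, and then De~Giorgi--Nash--Moser local boundedness in the annulus $B_{3/2}^+\setminus\overline{B_{1/2}^+}$ (where the Dirichlet and Neumann arcs are disjoint and smooth, and the coefficients $\tau^2p(\tau\cdot)$, $\tau q(\tau\cdot)$ are uniformly small) gives $\sup_{S_1^+}|w^\tau|\lesssim\|w^\tau\|_{L^2(B_2^+)}$. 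Both derivations of (ii) from (i) and \eqref{1stest} agree with the paper. Your route avoids relying on the full compactness and classification machinery of Lemma~\ref{l:blowup}, at the cost of carrying out a mixed-boundary Moser iteration; the paper's route is shorter because it leverages work already done. One small refinement worth noting: for $\int_0^2 s\,H(s\tau)/H(\tau)\,ds$ you use the doubling bound only on $s\in[1,2]$, while on $s\in(0,1]$ you should instead invoke the monotonicity of $H$ on small radii (a consequence of $\mathcal N\ge 0$ there, \eqref{eq:4}) to get $H(s\tau)\le H(\tau)$; with that observation the uniform $L^2(B_2^+)$ bound is complete.
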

\begin{proof}
We first notice that (ii) follows directly from (i) and
\eqref{1stest}. In order to prove (i), we argue by contradiction
and assume that there exists a sequence $\tau_n\to 0^+$ such that
\begin{equation*}
  \sup_{t\in[0,\pi]}\Big|w\Big(\frac{\tau_n}2\cos t,\frac{\tau_n}2\sin t\Big)\Big|^2>n H \Big (\frac{\tau_n}2 \Big)
\end{equation*}
with $H$ as in \eqref{H(r)}, i.e., defining $w^\tau$ as in
\eqref{eq:wtau}
\begin{equation}\label{eq:18}
  \sup_{x\in S_{1/2}^+}|w^{\tau_n}(z)|^2>2n\int_{S_{1/2}^+}|w^{\tau_n}(z)|^2 ds.
\end{equation}
From Lemma \ref{l:blowup}, there exists a subsequence $\tau_{n_k}$
such that $w^{\tau_{n_k}}\to\widetilde w$ in  $C^{0}(S_{1/2}^+)$ with $\widetilde w$ being as in \eqref{eq:11},
hence passing to the limit in \eqref{eq:18} a contradiction
arises.
\end{proof}

\noindent To obtain a sharp asymptotics of $H(r)$ as $r\to 0^+$,
it remains to prove that $\lim_{r\to 0^+} r^{-2\gamma} H(r)$ is
strictly positive.

\begin{Lemma} \label{l:limitepositivo} Under the same assumptions as in
  Lemmas \ref{l:limite} and \ref{l:stima}, we have that
\[
\lim_{r\to0^+}r^{-2\gamma}H(r)>0.
\]
\end{Lemma}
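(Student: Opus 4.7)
The plan is to argue by contradiction: assume $L:=\lim_{r\to 0^+}r^{-2\gamma}H(r)=0$ and analyse the $k_0$-th angular Fourier coefficient of $w$. Using the orthonormal basis $\{\psi_k\}_{k\ge 1}$ of $L^2(0,\pi)$ from \eqref{eq:17}, I would set $\varphi_k(r):=\int_0^\pi w(r\cos t,r\sin t)\,\psi_k(t)\,dt$, so that Parseval's identity gives $H(r)=\sum_{k\ge 1}\varphi_k(r)^2\ge \varphi_{k_0}(r)^2$.

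Next I would multiply equation \eqref{eq:5} in polar coordinates by $\psi_{k_0}(t)$ and integrate over $t\in(0,\pi)$, integrating by parts twice and using $\psi_{k_0}(\pi)=0$, $\psi_{k_0}'(0)=0$ together with the Dirichlet condition at $t=\pi$ and the Neumann condition (which in polar reads $\partial_t u(r,0)=-r\,q(r)\,u(r,0)$ for $u(r,t):=w(r\cos t,r\sin t)$). This produces the linear ODE
\[
-\varphi_{k_0}''(r)-\tfrac{1}{r}\varphi_{k_0}'(r)+\tfrac{\gamma^2}{r^2}\varphi_{k_0}(r)=\zeta(r),
\]
with $\zeta(r):=\int_0^\pi p(r\cos t,r\sin t)\,w(r\cos t,r\sin t)\,\psi_{k_0}(t)\,dt+\sqrt{2/\pi}\,q(r)\,w(r,0)/r$. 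By Lemma \ref{l:stima}(ii), $|w(z)|\le C|z|^\gamma$, hence $|\zeta(r)|\le C r^{\gamma-1}$. The homogeneous equation has fundamental system $\{r^\gamma, r^{-\gamma}\}$ with Wronskian $-2\gamma/r$, so variation of parameters delivers
\[
\varphi_{k_0}(r)=A\,r^\gamma+B\,r^{-\gamma}-\tfrac{r^\gamma}{2\gamma}\int_0^r s^{1-\gamma}\zeta(s)\,ds+\tfrac{r^{-\gamma}}{2\gamma}\int_0^r s^{1+\gamma}\zeta(s)\,ds,
\]
where both improper integrals are absolutely convergent at $0$ by the bound on $\zeta$, and both particular-solution terms are $O(r^{\gamma+1})$. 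The a priori estimate $|\varphi_{k_0}(r)|\le\sqrt{H(r)}=O(r^\gamma)$ from \eqref{1stest} forces $B=0$, yielding $\varphi_{k_0}(r)=A r^\gamma+O(r^{\gamma+1})$ and hence $r^{-\gamma}\varphi_{k_0}(r)\to A$ as $r\to 0^+$.

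On the blow-up side, Lemma \ref{l:blowup} implies that along every sequence $\tau_n\to 0^+$ a subsequence satisfies $\varphi_{k_0}(\tau_{n_k})/\sqrt{H(\tau_{n_k})}\to\pm 1$; squaring removes the sign ambiguity and gives the full-sequence limit $\varphi_{k_0}(r)^2/H(r)\to 1$ as $r\to 0^+$. Combining with $\varphi_{k_0}(r)^2/r^{2\gamma}\to A^2$ and $H(r)/r^{2\gamma}\to L$ forces $A^2=L$. Hence the assumption $L=0$ gives $A=0$ and $\varphi_{k_0}(r)=O(r^{\gamma+1})$, so $H(r)\le 2\varphi_{k_0}(r)^2\le C r^{2\gamma+2}$ for small $r$. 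But Lemma \ref{l:uppb}, inequality \eqref{2ndest} with $\sigma=1$, yields $H(r)\ge c\,r^{2\gamma+1}$ for small $r$, so that $c\,r^{2\gamma+1}\le C\,r^{2\gamma+2}$, i.e.\ $c\le C r$, which is impossible as $r\to 0^+$. This contradicts the assumption $L=0$ and completes the plan.

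I expect the angular integration by parts to be the main technical hurdle: it must be carried out in polar coordinates while correctly handling the Neumann boundary trace at $t=0$, which is what produces the singular-looking $r^{-1}q(r)w(r,0)$ term in $\zeta$. Equally essential is verifying that the variation-of-parameters contributions are genuinely $O(r^{\gamma+1})$ rather than merely $O(r^\gamma)$; this polynomial improvement, which stems from the pointwise bound $|w(z)|\le C|z|^\gamma$ of Lemma \ref{l:stima}(ii), is exactly what powers the contradiction against Lemma \ref{l:uppb}.
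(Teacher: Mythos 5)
Your proposal is correct and follows essentially the same strategy as the paper's proof: expand $w$ in angular Fourier modes, solve the resulting radial ODE for the $k_0$-th coefficient $\varphi_{k_0}$ by variation of parameters, use the pointwise bound of Lemma~\ref{l:stima}(ii) to show the inhomogeneous contribution is $O(r^{\gamma+1})$, and combine this with the blow-up limit of Lemma~\ref{l:blowup} and the lower bound~\eqref{2ndest} to reach a contradiction. The only cosmetic differences are that you square $\varphi_{k_0}/\sqrt{H}$ to remove the $\pm$ sign ambiguity and obtain a genuine full limit rather than a subsequential one, and you phrase the final contradiction as $c\,r^{2\gamma+1}\le H(r)\le C\,r^{2\gamma+2}$ instead of the paper's $\varphi_{k_0}(\tau)/\sqrt{H(\tau)}=O(\tau^{1/2})\to 0$ versus $\to\pm 1$; these are algebraically equivalent.
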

\begin{proof}
From Lemma \ref{l:blowup} there exists $k_0\in\N$, $k_0\geq 1$
such that $\gamma=\frac{2k_0-1}{2}$. Let us expand $w$ as
\begin{equation}\label{eq:expansion}
w(r\cos t,r\sin t)=\sum_{k=1}^\infty\varphi_k(r)
\cos\left(\tfrac{2k-1}{2}t\right)
\end{equation}
where
\begin{equation}\label{eq:37}
  \varphi_k(r)=\frac2\pi\int_{0}^\pi w(r\cos t,r\sin t) \cos\left(\tfrac{2k-1}{2}t\right)\,dt.
  \end{equation}
The Parseval identity yields
\begin{equation}\label{eq:17bis}
H(r)=\frac\pi2 \sum_{k=1}^{\infty}\varphi_k^2(r),\quad\text{for
all }0<r\leq R.
\end{equation}
From \eqref{1stest} and \eqref{eq:17bis} it follows that, for all
$k\geq1$,
\begin{equation}\label{eq:23}
 \varphi_k(r)=O(r^{\gamma})\quad\text{as
 }r\to0^+.
\end{equation}
Let $\eta\in C^\infty_c(0,R)$. Testing \eqref{eq:5} with the function
$\eta(r)\cos\left(\frac{2k-1}2 t\right)$, by \eqref{eq:expansion} we
obtain
\begin{align} \label{eq:form-var}
& \frac \pi 2\int_0^R r\varphi_k'(r)\eta'(r)\, dr+\frac \pi 2\int_0^R \tfrac{(2k-1)^2}4 \frac 1r \varphi_k(r)\eta(r)\, dr
=\int_0^R q(r)w(r,0)\eta(r)\, dr \\
& \notag \qquad \qquad +\int_0^R r\eta(r)\left(\int_0^\pi p(r\cos t,r\sin t)w(r\cos t,r\sin t)\cos\left(\tfrac{2k-1}2\, t\right)\, dt\right)dr \, .
\end{align}
Integrating by parts in the first in integral on the left hand side of \eqref{eq:form-var} and exploiting the fact that $\eta\in C^\infty_c(0,R)$ is an arbitrary test function, we infer
\begin{equation*}
-\varphi_k''(r)-\frac{1}{r}\varphi_k'(r)+ \frac14(2k-1)^2
\frac{\varphi_k(r)}{r^2}=\zeta_k(r),\quad\text{in }(0,R),
\end{equation*}
where
\begin{equation}\label{eq:38}
  \zeta_k(r)=\frac2{\pi r}\,q(r) w(r,0)+\frac2\pi\int_0^\pi p(r\cos t,r\sin t) w(r\cos t,r\sin t)
\cos\left(\tfrac{2k-1}{2}t\right)\,dt.
\end{equation}
Then, by a  direct calculation, there exist $c_1^k,c_2^k\in\R$
such that
\begin{equation}\label{eq:42}
\varphi_k(r)=r^{\frac{2k-1}{2}}
\bigg(c_1^k+\int_r^R\frac{t^{\frac{1-2k}{2}+1}}{2k-1}
\zeta_k(t)\,dt\bigg)+ r^{\frac{1-2k}{2}}
\bigg(c_2^k+\int_r^R\frac{t^{\frac{2k-1}{2}+1}}{1-2k}
\zeta_k(t)\,dt\bigg).
\end{equation}
From Lemma \ref{l:stima} it follows that
\begin{equation}\label{eq:zeta}
  \zeta_{k_0}(r)=O\left(r^{\frac{2k_0-1}{2}-1}\right)\quad\text{as }r\to 0^+,
\end{equation}
and hence
 the functions
\[
t\mapsto t^{\frac{1-2k_0}{2}+1} \zeta_{k_0}(t)
\quad\text{and}\quad t\mapsto t^{\frac{2k_0-1}{2}+1}\zeta_{k_0}(t)
\]
belong to $L^1(0,R)$. Hence
\[
r^{\frac{2{k_0}-1}{2}}
\bigg(c_1^{k_0}+\int_r^R\frac{t^{\frac{1-2{k_0}}{2}+1}}{2{k_0}-1}
\zeta_{k_0}(t)\,dt\bigg) =o(r^{\frac{1-2{k_0}}{2}})\quad\text{as
}r\to0^+,
\]
and then, by \eqref{eq:23}, there must be
\begin{equation*}
c_2^{k_0}=\int_0^R\frac{t^{\frac{2{k_0}-1}{2}+1}}{2{k_0}-1}
\zeta_{k_0}(t)\,dt.
\end{equation*}
From  (\ref{eq:zeta}), we then deduce that
\begin{align}\label{eq:12}
r^{\frac{1-2k_0}{2}}
\bigg(c_2^{k_0}+\int_r^R\frac{t^{\frac{2{k_0}-1}{2}+1}}{1-2{k_0}}
\zeta_{k_0}(t)\,dt\bigg) &= r^{\frac{1-2k_0}{2}}
\int_0^r\frac{t^{\frac{2{k_0}-1}{2}+1}}{2{k_0}-1}
\zeta_{k_0}(t)\,dt=O(r^{k_0+\frac12})
\end{align}
as $r\to0^+$.  From (\ref{eq:42}) and (\ref{eq:12}), we obtain
that
\begin{equation}\label{eq:24}
\varphi_{k_0}(r)=r^{\frac{2{k_0}-1}{2}}
\bigg(c_1^{k_0}+\int_r^R\frac{t^{\frac{1-2{k_0}}{2}+1}}{2{k_0}-1}
\zeta_{k_0}(t)\,dt +O(r)\bigg)\quad\text{as }r\to0^+.
\end{equation}
Let us assume by contradiction that
$\lim_{r \to 0^+} r^{-2\gamma}H(r)=0$. Then
(\ref{eq:17bis}) would imply that
\begin{equation*}
\lim_{r\to0^+}r^{-\frac{2k_0-1}{2}}\varphi_{k_0}(r)=0,
\end{equation*}
and hence, in view of \eqref{eq:24}, we would have that
\[
c_1^{k_0}+\int_0^R\frac{t^{\frac{1-2{k_0}}{2}+1}}{2{k_0}-1}
\zeta_{k_0}(t)\,dt=0,
\]
which, together with (\ref{eq:zeta}), implies
\begin{align}\label{eq:13tris}
r^{\frac{2{k_0}-1}{2}}
\bigg(c_1^{k_0}+\int_r^R\frac{t^{\frac{1-2{k_0}}{2}+1}}{2{k_0}-1}
\zeta_{k_0}(t)\,dt \bigg) =r^{\frac{2{k_0}-1}{2}}
\int_0^r\frac{t^{\frac{1-2{k_0}}{2}+1}}{1-2{k_0}}
\zeta_{k_0}(t)\,dt =O(r^{\frac12+k_0})
\end{align}
as $r\to0^+$. From (\ref{eq:24}) and (\ref{eq:13tris}), we
conclude that $\varphi_{k_0}(r)=O(r^{\frac12+k_0})$ as $r\to0^+$,
namely,
\[
\sqrt{H(\tau)} \int_0^{\pi}w^\tau(\cos t,\sin
t)\cos\left(\tfrac{2k_0-1}{2}t\right)\,dt=
O(\tau^{\frac12+k_0})\quad\text{as }\tau\to0^+,
\]
where $w^\tau$ is defined in \eqref{eq:wtau}.
 From (\ref{2ndest}),
there exists $C>0$ such that $\sqrt{H(\tau)}\geq
C\tau^{\gamma+\frac12}$ for $\tau$ small, and therefore
\begin{equation}\label{eq:26}
\int_0^{\pi}w^\tau(\cos t,\sin
t)\cos\left(\tfrac{2k_0-1}{2}t\right)\,dt=
O(\tau^{\frac12})\quad\text{as }\tau\to0^+.
\end{equation}
From Lemma \ref{l:blowup},
 for every sequence $\tau_n\to0^+$, there exist a subsequence
$\{\tau_{n_k}\}_{k\in\N}$ such that
\begin{equation}\label{eq:25}
w^{\tau_{n_k}}(\cos t,\sin t)\to \pm\sqrt{\frac2\pi}\,
\cos\bigg(\frac{2k_0-1}{2}t\bigg) \quad\text{in } L^2(0,\pi).
\end{equation}
From (\ref{eq:26}) and (\ref{eq:25}), we infer that
\[
0=\lim_{k\to+\infty} \int_0^{\pi}w^{\tau_{n_k}} (\cos t,\sin
t)\cos\left(\tfrac{2k_0-1}{2}t\right)\,dt
=\pm\sqrt{\frac2\pi}\,\int_0^{\pi}\cos^2\left(\tfrac{2k_0-1}{2}t\right)\,dt=
\pm\sqrt{\frac\pi2},
\]
thus reaching a contradiction.
\end{proof}

\begin{proof}[Proof of Theorem \ref{t:asym}]
  Identity (\ref{eq:35}) follows from Lemma \ref{l:blowup}, thus there
  exists $k_0\in \N$, $k_0\geq 1$, such that
  $\gamma=\lim_{r\to 0^+}{\mathcal N}(r)= \frac{2k_0-1}{2}$.

Let $\{\tau_n\}_{n\in\N}\subset (0,+\infty)$ be such that
$\lim_{n\to+\infty}\tau_n=0$. Then, from Lemmas \ref{l:blowup} and
\ref{l:limitepositivo},  scaling and a diagonal argument,  there
exists a subsequence $\{\tau_{n_k}\}_{k\in\N}$ and $\beta\neq0$
such that
\begin{equation}\label{eq:15}
\frac{w(\tau_{n_k}z)}{\tau_{n_k}^\gamma}\to \beta
|z|^{\frac{2k_0-1}2} \cos\bigg(\tfrac{2k_0-1}{2}\mathop{\rm Arg}
z\bigg)
\end{equation}
 strongly in $H^1(B_r^+)$ for all $r>0$  and in  $C^{0,\mu}_{\rm
    loc}(\overline{\R^2_+}\setminus\{0\})$  for every $\mu\in (0,1)$. In particular
\begin{equation}\label{eq:13}
\tau_{n_k}^{-\gamma}w(\tau_{n_k}(\cos t,\sin
  t))\to \beta \cos\bigg(\frac{2k_0-1}{2}t\bigg)
\end{equation}
in  $C^{0,\mu}([0,\pi])$. To prove that the above converge
occurs as $\tau\to0^+$ and not only along subsequences, we are
going to show that $\beta$ depends neither on the sequence
$\{\tau_n\}_{n\in\N}$ nor on its subsequence
$\{\tau_{n_k}\}_{k\in\N}$.

   Defining $\varphi_{k_0}$ and $\zeta_{k_0}$
as in (\ref{eq:37}) and \eqref{eq:38}, from (\ref{eq:13}) it
follows that
\begin{align}\label{eq:43}
  \frac{\varphi_{k_0}(\tau_{n_k})}{\tau_{n_k}^{\gamma}} &=
  \frac2\pi\int_{0}^\pi\frac{w(\tau_{n_k}\cos t,\tau_{n_k}\sin
    t)}{\tau_{n_k}^{\gamma}} \cos\left(\tfrac{2k_0-1}{2}t\right)\,dt\\
&\notag\to
\frac2\pi\beta\int_{0}^\pi\cos^2\left(\tfrac{2k_0-1}{2}t\right)\,dt
=\beta
\end{align}
as $k\to+\infty$. On the other hand, from \eqref{eq:42},
\eqref{eq:12} , and \eqref{eq:24} we know that that
\begin{align}\label{eq:14}
  \varphi_{k_0}(\tau)&=\tau^{\frac{2k_0-1}{2}}
\bigg(c_1^{k_0}+\int_\tau^R\frac{t^{\frac{1-2k_0}{2}+1}}{2k_0-1}
\zeta_{k_0}(t)\,dt\bigg)+\tau^{\frac{1-2k_0}{2}}
\int_0^\tau\frac{t^{\frac{2{k_0}-1}{2}+1}}{2{k_0}-1}
\zeta_{k_0}(t)\,dt\\
\notag&=\tau^{\frac{2{k_0}-1}{2}}
\bigg(c_1^{k_0}+\int_\tau^R\frac{t^{\frac{1-2{k_0}}{2}+1}}{2{k_0}-1}
\zeta_{k_0}(t)\,dt +O(\tau)\bigg)\quad\text{as }\tau\to0^+.
\end{align}
Choosing $\tau=R$ in the first line of \eqref{eq:14}, we obtain
\[
c_1^{k_0} =R^{-\frac{2k_0-1}{2}}\varphi_{k_0}(R)-R^{1-2k_0}
\int_0^R\frac{t^{\frac{2{k_0}-1}{2}+1}}{2{k_0}-1}
\zeta_{k_0}(t)\,dt .
\]
Hence, from the second line of \eqref{eq:14}, we obtain that
\[
\tau^{-\gamma}\varphi_{k_0}(\tau)\to
R^{-\frac{2k_0-1}{2}}\varphi_{k_0}(R)-R^{1-2k_0}
\int_0^R\frac{t^{\frac{2{k_0}-1}{2}+1}}{2{k_0}-1}
\zeta_{k_0}(t)\,dt+\int_0^R\frac{t^{\frac{1-2{k_0}}{2}+1}}{2{k_0}-1}
\zeta_{k_0}(t)\,dt,
\]
as $\tau\to0^+$. Then, from (\ref{eq:43}) we deduce that
\begin{equation}\label{eq:31}
  \beta=R^{-\frac{2k_0-1}{2}}\varphi_{k_0}(R)-R^{1-2k_0}
\int_0^R\frac{t^{\frac{2{k_0}-1}{2}+1}}{2{k_0}-1}
\zeta_{k_0}(t)\,dt+\int_0^R\frac{t^{\frac{1-2{k_0}}{2}+1}}{2{k_0}-1}
\zeta_{k_0}(t)\,dt.
\end{equation}
In particular  $\beta$ depends neither on the sequence
$\{\tau_n\}_{n\in\N}$ nor on its subsequence
$\{\tau_{n_k}\}_{k\in\N}$, thus implying that the convergence in
(\ref{eq:15}) actually holds as $\tau\to 0^+$ and proving the
theorem. We observe that \eqref{eq:21} follows by replacing
\eqref{eq:37} and \eqref{eq:38} into  \eqref{eq:31}.
\end{proof}

\section{Some regularity estimates}\label{sec:some-regul}

In this section, we prove some regularity and
  approximation results, which will be used to estimate the H\"older
  norm of the difference between a solution $u$ to \eqref{eq:22} and
  its asymptotic profile $\beta F_{k_0}$.
\begin{Proposition}\label{eq:reg-estim}
Let $f\in L^\infty (B_4^+)$, $g\in L^\infty (\G_n^4)$ and let $v\in
H^1(B_4)\cap L^\infty(B_4^+)$ solve
\begin{equation}\label{eq:v-solv-app-1}
\begin{cases}
  -\Delta   v=f,&\text{in }B_4^+,\\
\partial_\nu v =g,&\text{on }\Gamma^4_n,\\
v=0,&\text{on }\Gamma^4_d.
  \end{cases}
\end{equation}
Then, for every $\e>0$, there exists a constant $C>0$
  (independent of $v$, $f$, and $g$) such that \be
\|v\|_{C^{1/2-\e} (\ov{B_2^+})}\leq C \left( \|f\|_{L^\infty(B_4^+)}+
  \|g\|_{L^\infty(\G^4_n)}+ \|v\|_{L^\infty(B_4^+)} \right) .  \ee
\end{Proposition}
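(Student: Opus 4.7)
The expected optimal regularity at the Dirichlet--Neumann junction $(0,0)$ is $C^{1/2}$, dictated by the model harmonic profile $r^{1/2}\cos(t/2)$; the slightly weaker $C^{1/2-\varepsilon}$ estimate can be obtained by a blow-up argument in the spirit of Serra \cite{Serra}. Away from the junction, classical Schauder theory applied separately at interior points, at Dirichlet boundary points of $\Gamma_d^4$, and at Neumann boundary points of $\Gamma_n^4$, gives $C^{1,\alpha}$ bounds of $v$ on $\overline{B_2^+}\setminus B_{r_0}(0)$ for any fixed $r_0>0$, with constants controlled by $\|f\|_{L^\infty(B_4^+)}+\|g\|_{L^\infty(\Gamma_n^4)}+\|v\|_{L^\infty(B_4^+)}$. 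Hence the entire difficulty is to uniformly control the modulus of continuity of $v$ at the origin.

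The plan is to argue by contradiction. I would suppose the estimate fails for some $\varepsilon>0$ and produce sequences $v_n$, $f_n$, $g_n$ solving \eqref{eq:v-solv-app-1} with uniformly bounded data but
\[
\Theta_n:=\sup_{x\in \overline{B_2^+}\setminus\{0\}}\frac{|v_n(x)|}{|x|^{1/2-\varepsilon}}\to+\infty.
\]
A Campanato-type selection produces scales $r_n\to 0^+$ (necessarily converging to $0$ by the away-from-zero regularity above) where the quotient almost saturates $\Theta_n$, and the rescalings
\[
w_n(z):=\frac{v_n(r_n z)}{\Theta_n\, r_n^{1/2-\varepsilon}}
\]
solve rescaled mixed problems on $B_{4/r_n}^+$ whose right-hand side $r_n^2 f_n(r_n\cdot)/\Theta_n$ and Neumann datum $r_n g_n(r_n\cdot)/\Theta_n$ vanish uniformly as $n\to\infty$. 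The normalization keeps a fixed quantitative lower bound for the Hölder quotient of $w_n$ at scale $1$, while the maximality of $r_n$ forces the sub-critical growth $|w_n(z)|\leq C|z|^{1/2-\varepsilon}$ on growing domains.

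A preliminary base $C^\alpha$ regularity (for some small $\alpha>0$ depending only on data), which I would establish either by a direct De Giorgi--Moser iteration adapted to mixed boundary data, or by applying the conformal square-root transformation $w=z^{1/2}$ to straighten the junction into a right-angle corner with pure Dirichlet on one side and pure Neumann on the other followed by an odd reflection, provides the local compactness needed to extract via Arzel\`a--Ascoli a subsequential limit $w_\infty\in H^1_{\rm loc}(\overline{\R^2_+})\cap C^0_{\rm loc}(\overline{\R^2_+})$. Passing to the limit in the weak formulation gives that $w_\infty$ is harmonic in $\R^2_+$, vanishes on $(-\infty,0)\times\{0\}$, has zero Neumann derivative on $(0,\infty)\times\{0\}$, satisfies the global bound $|w_\infty(z)|\leq C|z|^{1/2-\varepsilon}$, and is nontrivial.

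The crucial and hardest step is a Liouville-type assertion forcing $w_\infty\equiv 0$. I would prove it by expanding $w_\infty(r\cos t,r\sin t)$ in the orthonormal basis $\{\sqrt{2/\pi}\cos((2k-1)t/2)\}_{k\geq 1}$ from \eqref{eq:17}: each radial Fourier coefficient solves an Euler equation whose general solution has the form $c_k^+ r^{(2k-1)/2}+c_k^- r^{-(2k-1)/2}$, and the combination of the sub-critical growth bound with $(2k-1)/2\geq 1/2>1/2-\varepsilon$ kills every $c_k^+$, while local $H^1$-integrability near $0$ kills every $c_k^-$. The resulting $w_\infty\equiv 0$ contradicts the lower bound preserved in the limit. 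The main obstacle is making the Campanato extraction of scales and the Liouville step simultaneously rigorous: one needs to select $r_n$ so that the normalization genuinely survives passage to the limit, and one needs the base Hölder regularity to be quantitative with constants depending only on the data — independent of the contradicting sequences.
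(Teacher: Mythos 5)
Your proposal is a genuine alternative route, quite different from the paper's. The paper proves this proposition \emph{constructively}: it splits $v = v_1 + v_2 + V$, where $v_1$ solves a mixed problem on a $C^2$ domain with the inhomogeneous interior datum $f$ but homogeneous boundary data and is estimated via Savar\'e's Besov regularity theorem \cite{Savare}; $v_2$ is the Caffarelli--Silvestre harmonic extension (via the Poisson kernel) of the half-fractional-Laplacian solution $\tilde v_2$ carrying the Neumann datum $g$, estimated with Ros-Oton--Serra regularity \cite{ROS2014}; and the remainder $V$ is harmonic with purely homogeneous mixed conditions, again treated by \cite{Savare}. Your proposal instead runs a Serra-style contradiction/blow-up, extracts a limiting profile that is a global harmonic function in $\R^2_+$ with mixed homogeneous boundary data and subcritical growth, and kills it with a Fourier Liouville theorem. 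This is precisely the mechanism the paper uses later, for Proposition~\ref{prop:sharp-estim-u-flat} and Lemma~\ref{lem:liouville}, but \emph{not} here; the present proposition is meant to supply the quantitative base regularity on which that later blow-up argument relies, so the two have a logical division of labour that you are partly collapsing. Your route is admissible, but it is heavier on this proposition than the paper's modular decomposition, and the two ingredients you list for establishing base compactness (De Giorgi--Moser vs.\ the square-root map) are not on an equal footing: if you carry out the conformal square-root $z\mapsto z^{1/2}$ fully, together with an odd reflection across the (now vertical) Dirichlet ray and an even reflection across the (now horizontal) Neumann ray, you obtain a Poisson equation on a full disk with $L^p$ right-hand side for every $p$, so Calder\'on--Zygmund plus the fact that $|z_1^{1/2}-z_2^{1/2}|\le |z_1-z_2|^{1/2}$ (valid on a sector) directly delivers the full $C^{1/2}$ estimate on the original domain, making the blow-up stage redundant. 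You should also be careful that your contradiction quantity $\Theta_n$ is a one-point growth quotient from the origin, not a two-point H\"older seminorm; the two are equivalent here only after invoking scaled Schauder estimates in dyadic annuli away from the junction (which are legitimate, since each annulus sees pure Dirichlet or pure Neumann data on each boundary piece), and this conversion must be made explicit for the Campanato selection to be sound.
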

\begin{proof}
In the sequel we denote as $C>0$ a positive constant
  independent of $v$,  $f$, and $g$ which may vary from line to line.
  We consider a $C^2$  domain $\O'$ such that $B_3^+\subset \O'\subset B_4^+$ and $\G^3_n\cup \G^3_d\subset \de \O'.$
 We define the functions (obtained uniquely  by  minimization arguments) $v_1 \in H^1 (\O')$ satisfying
\begin{equation}\label{eq:v1-solv-app}
\begin{cases}
  -\Delta   v_1=  f,&\text{in } \O' ,\\
\partial_\nu v_1  =0,&\text{  on $\G^{{3}}_n$ } ,\\
v_1  =0,& \text{ on $\partial\O'\setminus \G^3_n$ },
  \end{cases}
\end{equation}
and   $\ti v_2\in H^{1/2}(\R)$ satisfying
$$
\begin{cases}
  (-\D)^{\frac{1}{2}} \ti  v_2=g,&\text{in }  (0,4) ,\\
\ti v_2  =0,&  \text{on $\R \setminus (0,4)$ }.
%
  \end{cases}
$$
Therefore  by   (fractional) elliptic regularity theory (see
e.g. \cite[Proposition 1.1]{ROS2014}),  we deduce that
\be\label{eq:tiv2-C-12}
\|\ti v_2 \|_{C^{1/2}({\R})} \leq C  \|  g\|_{L^\infty(\G_n^4)}.
\ee
Consider the Poisson kernel $P(x_1,x_2)= \frac 1\pi x_2  |x|^{-2}$ with respect to the half-space $\R^2_+$, see \cite[Section 2.4]{cs}. We define
$$
v_2(x_1,x_2)=(P(\cdot, x_2)\star \ti v_2)( x_1)=\frac 1\pi x_2\int_{\R}\frac{\ti v_2(t)}{x_2^2+(x_1-t)^2 }\, dt= \frac 1\pi \int_{\R }\frac{\ti v_2( x_1-r x_2)}{1+r^2 }\, dr
$$
where with the symbol $\star$ we denoted the convolution product with respect to the first variable.
One can check that $v_2\in H^1_{\rm loc}(\overline{\R^2_+})$
 (see for example \cite[Subsection 2.1]{bcd}) and
\begin{equation}\label{eq:v2-solv-app}
\begin{cases}
  -\Delta   v_2=0,&\text{in }  \R^2_+ ,\\
\partial_\nu v_2  =  g,& \text{on $\G^4_n$ },\\
v_2  =0,&  \text{on $\R \setminus (0,4)$}.
%
  \end{cases}
\end{equation}
It is easy to see that
$$
\|v_2\|_{L^\infty(\R^2_+)}\leq C\|\ti v_2\|_{L^\infty(\R)}.
$$
Moreover  by \eqref{eq:tiv2-C-12},  for $x,y\in \ov{\R^2_+}$ we get
\begin{align*}
|v_2(x)- v_2(y)|&\leq  C \|
                  g\|_{L^\infty(\G_n^4)}|x-y|^{1/2}\int_{\R
                  }\frac{\max(1, |r|^{1/2})}{1+r^2 }\, dr\\
&\leq C  \|  g\|_{L^\infty(\G_n^4)}|x-y|^{1/2}.
\end{align*}
It follows that
\be \label{eq:est-v2}
\|v_2\|_{C^{1/2}(\ov{\R^2_+})} \leq C \|  g\|_{L^\infty(\G_n^4)} .
\ee
By \cite[Theorem 1]{Savare}  and continuous embeddings of
  Besov spaces into H\"older spaces, we get
$$
\|v_1\|_{C^{1/2-\e}(\overline{\Omega'})}^2 \leq C \|v_1\|_{H^1(\O')}
\left( \| f\|_{L^\infty(B_4^+)} + \|v_1\|_{H^1(\O')} \right) .
$$
Multiplying \eqref{eq:v1-solv-app} by $v_1$,  integrating by parts and using Young's inequality, we get
$$
C \| v_1\|_{L^2(\O')}^2 \leq \|\n v_1\|_{L^2(\O')}^2\leq \| v_1\|_{L^2(\O')}  \| f\|_{L^2(B_4^+)}  \leq    \e \| v_1\|_{L^2(\O')}^2+ C_\e   \| f\|_{L^\infty(B_4^+)}^2,
$$
where in the first estimate we have used the Poincar\'e inequality
for functions vanishing on a portion of the boundary.
 We then conclude that
\be \label{eq:est-v1}
\|v_1\|_{C^{1/2-\e}(\ov{\Omega'})}  \leq   C  \| f\|_{L^\infty(B_4^+)}.
\ee
Now, thanks to \eqref{eq:v-solv-app-1}, \eqref{eq:v1-solv-app} and \eqref{eq:v2-solv-app}, the
function $V:=v-(v_1+v_2)\in H^1(\Omega')$ solves the
equation
\begin{equation}\label{eq:v-solv-app-2}
\begin{cases}
  -\Delta   V=0,&\text{in }\Omega',\\
\partial_\nu V =0,&\text{on }\Gamma^3_n,\\
V=0,&\text{on }\Gamma^3_d.
  \end{cases}
\end{equation}
By elliptic regularity theory, we have that
\be
\label{eq:est-for-V-0}
\|V\|_{C^{2}(\ov{ B_{5/2}^+\setminus B_1^+} )} \le C
\|V\|_{H^1(B_r^+)}
\ee
where $r$ is a fixed radius satisfying $\frac 52<r<3$ and $C>0$ is independent of $V$. Let $\eta$ a radial cutoff function compactly supported in $B_3$ satisfying $\eta\equiv 1$ in $B_r$; testing \eqref{eq:v-solv-app-2} with $\eta V$, we infer that $\|V\|_{H^1(B_r^+)}\le C\|V\|_{L^2(\Omega')}$ for some constant $C>0$ independent of $V$. Hence by \eqref{eq:est-for-V-0} we obtain
\be
\label{eq:est-for-V}
\|V\|_{C^{2}(\ov{ B_{5/2}^+\setminus B_1^+} )} \leq C
\|V\|_{L^\infty(\Omega')}.
\ee
 Let
$\ti \eta\in C^\infty_c(B_{5/2})$ be a radial function, with
$\ti \eta\equiv 1$ on $B_2$. Then the function
$\ti V:=\ti \eta V\in H^1(\R^2_+)$ solves
\begin{equation*} 
\begin{cases}
  -\Delta  \ti  V= - V\D \ti \eta- 2\n V \cdot \n \ti\eta,&\text{in }\R^2_+,\\
\partial_\nu \ti V(x_1,0) =0,& x_1\in (0,+\infty),\\
\ti V(x_1,0)=0,& x_1\in (-\infty,0) .
  \end{cases}
\end{equation*}
Then by  \cite[Theorem 1]{Savare}, the arguments above, \eqref{eq:est-for-V}, \eqref{eq:est-v2} and  \eqref{eq:est-v1}, we deduce that
\begin{align*}
\| v-(v_1+v_2) \|_{C^{1/2-\e}(\ov{B_2^+})} &\leq \|\ti V
\|_{C^{1/2-\e}(\ov{\R^2_+})} \leq C
                                             \|V\|_{L^\infty(\Omega')} \\
&\leq C
\left( \|f\|_{L^\infty(B_4^+)}+ \|g\|_{L^\infty(\G^4_n)}+
  \|v\|_{L^\infty(B_4^+)} \right) .
\end{align*}
This, combined again with \eqref{eq:est-v2} and \eqref{eq:est-v1} completes the proof.
\end{proof}

Recalling \eqref{eq:def-F-k}, for every $k\in \N$ with $k\geq1$,  we consider the finite dimensional linear subspace of $L^2(B_r^+)$, given by
$$
\cS_k:=\left\{\sum_{j=1}^k a_j F_j\,:\, (a_1,\dots, a_k)\in \R^k \right\}.
$$
%
 For every $r>0$, $k\geq1$, and $u\in L^2(B_r^+)$, we  let
\[
F_{k,r}^u:= \textrm{Argmin}_{F\in \cS_k}\int_{B_r^+}(u(x)-F(x))^2\, dx
\]
be the $L^2(B_r^+)$-projection of $u$ on $\cS_k$, so that
\[
  \min_{F\in \cS_k}\int_{B_r^+}(u(x)-F(x))^2\, dx=   \int_{B_r^+}(u(x)-F_{k,r}^u(x))^2\, dx
\]
and
\be\label{eq:equilib}
\int_{B_r^+}(u(x)-F_{k,r}^u(x))F(x)\, dx=0,\quad\textrm{for all $F\in\cS_k$}.
\ee
Next, we estimate the $L^\infty$ norm of the difference between a solution of a mixed boundary value problem on $B_1^+$ and
its projection on $\cS_k$.

\begin{Proposition}\label{prop:sharp-estim-u-flat}
Let   $u\in H^1(B_1^+)\cap L^\infty(\R^2_+)$ solve
\begin{equation}\label{eq:v-solv-app}
\begin{cases}
  -\Delta   u=f,&\text{in }B_1^+,\\
\partial_\nu u =g,&\text{on }\Gamma^1_n,\\
u=0,&\text{on }\Gamma^1_d,
  \end{cases}
\end{equation}
where, for some $k\in \N\setminus\{0\}$ and $\ov C>0$,
\begin{align*}
&|f(x)| \leq \ov C  | x|^{\max(\g_k-\frac{3}{2},0)},  \quad\textrm{for every $x\in B_1^+$},\\
&|g(x_1)|\leq \ov C | x_1|^{\max({\g_k-\frac{1}{2}},0)}, \quad\textrm{for
  every $ x_1\in (0, 1)$},
\end{align*}
and  $\gamma_k=\frac{2k-1}{2}$.
Then, for every $\a\in (0,1/2)$, we have  that
\be
\sup_{r>0}r^{-\g_k-\a}\|u-F_{k,r}^u  \|_{L^\infty(B_r^+)} <\infty.
\ee
\end{Proposition}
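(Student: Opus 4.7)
The plan is to argue by contradiction through a blow-up/compactness scheme, in the spirit of Serra (see the citations already discussed in the introduction). Suppose the conclusion fails; then the monotone nonincreasing function
\[
\theta(r) := \sup_{r'\in [r,r_*]} (r')^{-\gamma_k-\alpha}\,\|u-F_{k,r'}^u\|_{L^\infty(B_{r'}^+)}
\]
satisfies $\theta(r)\to+\infty$ as $r\to 0^+$, for some small fixed $r_*>0$. Pick $r_n\to 0^+$ with
\[
r_n^{-\gamma_k-\alpha}\,\|u-F_{k,r_n}^u\|_{L^\infty(B_{r_n}^+)}\ge \tfrac12\,\theta(r_n),
\]
and introduce the blow-up family
\[
v_n(x):=\frac{(u-F_{k,r_n}^u)(r_n x)}{r_n^{\gamma_k+\alpha}\,\theta(r_n)},\qquad x\in B_{r_*/r_n}^+,
\]
which by construction satisfies $\|v_n\|_{L^\infty(B_1^+)}\ge \tfrac12$, and is orthogonal in $L^2(B_1^+)$ to $\cS_k$ (by rescaling \eqref{eq:equilib}, using that $F\in\cS_k$ iff $F(r_n\cdot)/r_n^{\gamma_k}$ lies in a suitable rescaled copy of $\cS_k$).

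The main technical step, which I expect to be the principal obstacle, is the polynomial growth estimate: there exists $C>0$ such that for every $R\in[1,r_*/(2r_n)]$,
\[
\|v_n\|_{L^\infty(B_R^+)}\le C\,R^{\gamma_k+\alpha}.
\]
For this I would split $u-F_{k,r_n}^u=(u-F_{k,r_nR}^u)+(F_{k,r_nR}^u-F_{k,r_n}^u)$. The first piece is controlled directly by monotonicity of $\theta$, since $\|u-F_{k,r_nR}^u\|_{L^\infty(B_{r_nR}^+)}\le (r_nR)^{\gamma_k+\alpha}\theta(r_nR)\le (r_nR)^{\gamma_k+\alpha}\theta(r_n)$. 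The second lies in the finite-dimensional space $\cS_k$ and is handled by a dyadic iteration: the $L^2$-optimality of the projection gives
\[
\|F_{k,2s}^u-F_{k,s}^u\|_{L^2(B_s^+)}\le 2\,\|u-F_{k,2s}^u\|_{L^2(B_s^+)}\le C\,s^{\gamma_k+\alpha+1}\theta(s),
\]
for every dyadic scale $s\in[r_n,r_nR]$, and equivalence of norms on $\cS_k$ (combined with homogeneity of the $F_j$) transfers these $L^2$ bounds into the $L^\infty(B_{r_n}^+)$ control of the telescoping sum, whose geometric growth in $R^{\gamma_k+\alpha}$ is dictated by $\theta$ being nonincreasing.

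Once the growth bound is in hand, each $v_n$ solves, on its domain, a mixed problem of the form of \eqref{eq:v-solv-app} with
\[
-\Delta v_n=\tfrac{r_n^{\,2-\gamma_k-\alpha}}{\theta(r_n)}\,f(r_n\cdot),\qquad \partial_\nu v_n=\tfrac{r_n^{\,1-\gamma_k-\alpha}}{\theta(r_n)}\,g(r_n\cdot),
\]
and the pointwise assumptions on $f$ and $g$ yield residual powers of $r_n$ with exponent $\tfrac12-\alpha>0$ (or larger in the $k=1$ case), so both right-hand sides tend to zero uniformly on compact sets because $\theta(r_n)\to+\infty$. Applying Proposition \ref{eq:reg-estim} to rescaled windows, and using the growth bound to supply the needed $L^\infty$-input, I obtain uniform $C^{1/2-\varepsilon}$ estimates for $v_n$ on every $\overline{B_R^+}$. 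Ascoli--Arzelà and a diagonal extraction then furnish a locally uniform limit $v_\infty\in C^{1/2-\varepsilon}_{\mathrm{loc}}(\overline{\R^2_+})$ which is harmonic in $\R^2_+$, vanishes on $\{x_1<0,\,x_2=0\}$, has vanishing normal derivative on $\{x_1>0,\,x_2=0\}$, satisfies $|v_\infty(x)|\le C(1+|x|^{\gamma_k+\alpha})$, is $L^2(B_1^+)$-orthogonal to $\cS_k$, and has $\|v_\infty\|_{L^\infty(B_1^+)}\ge\tfrac12$.

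The proof concludes with a Liouville-type classification: separating variables in polar coordinates and using the orthonormal basis \eqref{eq:17}, one writes
\[
v_\infty(r\cos t,r\sin t)=\sum_{j\ge 1}\bigl(c_j r^{\gamma_j}+d_j r^{-\gamma_j}\bigr)\cos\!\left(\tfrac{2j-1}{2}t\right),
\]
and harmonicity, regularity at $0$, and the growth bound at infinity (combined with $\gamma_k+\alpha<\gamma_{k+1}=\gamma_k+1$, which holds since $\alpha<\tfrac12$) force $d_j=0$ for all $j$ and $c_j=0$ for $j>k$, so that $v_\infty\in\cS_k$. The $L^2$-orthogonality to $\cS_k$ then gives $v_\infty\equiv 0$, contradicting $\|v_\infty\|_{L^\infty(B_1^+)}\ge\tfrac12$ and completing the proof.
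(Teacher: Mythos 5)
Your proposal follows the same blow-up/compactness scheme as the paper's proof, with the same key ingredients: the nonincreasing auxiliary function $\theta$, the blow-up family $v_n$, the dyadic iteration to control $F_{k,r_nR}^u-F_{k,r_n}^u$ in $\cS_k$ and obtain the polynomial growth bound $\|v_n\|_{L^\infty(B_R^+)}\le C R^{\gamma_k+\alpha}$, the application of Proposition~\ref{eq:reg-estim} for uniform Hölder estimates, and the final Liouville classification (which the paper isolates as Lemma~\ref{lem:liouville}) combined with the preserved $L^2$-orthogonality to $\cS_k$ to reach a contradiction. The only cosmetic difference is in how you extract the coefficient increments $|a_j(2s)-a_j(s)|$: you invoke $L^2$-optimality of the projection directly, while the paper first bounds $F_{k,2s}^u-F_{k,s}^u$ in $L^\infty(B_s^+)$ by the triangle inequality and then takes $L^2$-norms; both yield the identical bound $|a_j(2s)-a_j(s)|\,s^{\gamma_j}\le C s^{\gamma_k+\alpha}\theta(s)$, so there is no substantive divergence.
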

\begin{proof}
In the sequel, $C>0$ stands for a positive constant, only depending on $\a,\ov C$ and $k$, which may vary from line to line.
 Assume by contradiction that, there exists $\a\in( 0,1/2)$ such that
\be
\sup_{r>0}r^{-\g_k-\a}\|u-F_{k,r}^u  \|_{L^\infty(B_r^+)} =\infty.
\ee
 We consider the nonincreasing  function
\be
\Theta(r):=\sup_{\ov r> r} \ov r^{-\g_k-\a}  \| u- F_{k,\ov r}^u\|_{L^\infty(B_{\ov r}^+)}.
\ee
It is clear from our assumption that
$$
\Theta(r)  \nearrow +\infty  \qquad\textrm{ as $r\to 0$}.
$$
Then there exists a sequence $r_n \to 0 $ such that
$$
   r^{-\g_k-\a}_n\| u- F_{k,r_n}^u\|_{L^\infty(B_{r_n}^+)}\geq \frac{\Theta(r_n)}{2}.
$$
%
 %
 %
We define
\begin{align*}
v_n(x)&:=   r^{-\g_k-\a}_n \frac{ u(r_n x)-  F_{k,r_n}^u(r_n x) }{\Theta(r_n)} ,
\end{align*}
so that
\be \label{eq:contradict-2}
    \| v_n\|_{L^\infty(B_1^+)}    \geq \frac{1}{2}.
\ee
Moreover,  by a change of variable in \eqref{eq:equilib}, we get
\be\label{eq:contradict-3}
\int_{B_1^+}v_n(x)F(x)\, dx=0 \qquad\textrm{ for every $F\in \cS_k$}.
\ee
\noindent
\textbf{Claim:} For $R=2^m$ and $r>0$, we have
\be\label{eq:FRF}
 \frac{1}{r^{\g_k+\a}\Theta(r)}\| F^u_{k, r R}-F^u_{k, r}\|_{L^\infty(B_{  rR}^+)}\leq C R^{\g_k+\a}.
\ee
Indeed, by definition, for every $\ov r> r>0,  $ we have
$$
   \| u- F_{k,\ov r}^u\|_{L^\infty(B_{\ov r}^+)}\leq \ov r^{\g_k+\a} \Theta(r)
$$
and thus,  using the monotonicity of $\Theta$,  for every $x\in B_r^+$ we get
\begin{align}\label{eq:F2r-Fr}
|  F_{k,2 r}^u(x) - F_{k,  r}^u(x)| &\leq     \| u- F_{k,2
                                      r}^u\|_{L^\infty(B_{ 2r}^+)}+ \|
                                      u- F_{k, r}^u\|_{L^\infty(B_{
                                      r}^+)}\\
&\notag \leq  2^{1+\g_k+\a}r^{\g_k+\a} \Theta(r)\leq C r^{\g_k+\a} \Theta(r).
\end{align}
Letting $F_{k,  r}^u=\sum_{j=1}^k a_j(r)F_j$ and $\gamma_j=\frac{2j-1}{2}$, by taking the $L^2(B_r^+)$-norm in \eqref{eq:F2r-Fr}, we get
\be
|  a_j(2r)-a_j(r)| r^{\g_j}\leq C r^{\g_k+\a} \Theta(r) \qquad\textrm{ for every $r>0$.}
\ee
 Then
\begin{align*}
 \frac{1}{r^{\g_k+\a}\Theta(r)}\| F^u_{k, r  2^m}-F^u_{k, r}\|_{L^\infty(B_{ r 2^m}^+ )}& \leq   \frac{1}{r^{\g_k+\a}\Theta(r)} \sum_{j=1}^{k} |  a_j(r 2^m)-a_j(r)| (r2^m)^{\g_j} \\
& \leq \frac{1}{r^{\g_k+\a}\Theta(r)} \sum_{j=1}^{k} \sum_{i=1}^{m}  | a_j(2^i r) - a_j(2^{i-1} r)|  (r 2^m)^{\g_j} \\
&\leq  \frac{C}{r^{\g_k+\a}\Theta(r)}  \sum_{j=1}^{k} \sum_{i=1}^{m} 2^{\g_j m}   2^{(\g_k-\g_j+\a)(i-1)}r^{\g_k +\a} \Theta(2^{i-1}r)\\
&\leq  {C}   \sum_{j=1}^{k} \sum_{i=1}^{m} 2^{\g_j m}   2^{(\g_k-\g_j+\a)(i-1)}\leq  {C}   \sum_{j=1}^{k}   2^{\g_j m}   2^{(\g_k-\g_j+\a) m}  \\
&\leq  C 2^{m(\g_k+\a)}  .
\end{align*}
This proves the \textbf{claim}.\\

From  the definition of $\Theta$ and \eqref{eq:FRF}, for $R= 2^m\geq 1$,  we have
\begin{align*}
\sup_{x\in B_R^+}|v_n(x)|&= \frac{1}{r^{\g_k+\a}_n\Theta(r_n)}\| u-F^u_{k, r_n}\|_{L^\infty(B_{r_n R}^+)}\\
&\leq   \frac{1}{r^{\g_k+\a}_n\Theta(r_n)}\| u-F^u_{k, r_n
  R}\|_{L^\infty(B_{r_n R}^+)}+
 \frac{1}{r^{\g_k+\a}_n\Theta(r_n)}\| F^u_{k, r_n R}-F^u_{k, r_n}\|_{L^\infty(B_{ r_n R}^+)}\\
&\leq  \frac{ 1}{ r^{\g_k+\a}_n \Theta(r_n) }  (r_nR)^{\g_k+\a}  \Theta( r_n) +  CR^{\g_k+\a}\\
&\leq  CR^{\g_k+\a}.
\end{align*}
Consequently, letting $R\geq 1$ and $m_0\in \N$ be the smallest
integer such that $2^{m_0}\geq R$, we obtain that
\begin{align} \label{eq:bound}
\sup_{x\in B_R^+}|v_n(x)|&\leq
\sup_{x\in B_{2^{m_0}}^+}|v_n(x)|\leq
C2^{m_0(\gamma_k+\alpha)}\leq C(2R)^{\gamma_k+\alpha}\\
\notag &\leq
CR^{\g_k+\a},
\end{align}
 with $C$ being a positive constant independent of $R$. Thanks to \eqref{eq:eq-for-F-k} and
\eqref{eq:v-solv-app}, it is plain that
$$
\begin{cases}
  -\Delta   v_n =\frac{r_n^{2-\g_k-\a}}{\Theta(r_n)} f (r_n \cdot ),&\text{in }B_{{1}/{r_n}}^+,\\[5pt]
\partial_\nu v_n  = \frac{r_n^{1-\g_k-\a}}{\Theta(r_n)} g (r_n \cdot ),&\text{on }\Gamma^{{1}/{r_n}}_n,\\[5pt]
v_n =0,&\text{on }\Gamma^{{1}/{r_n}}_d.
  \end{cases}
$$
By assumption, we have that $\frac{r_n^{2-\g_k-\a}}{\Theta(r_n)} f (r_n x)$
and $ \frac{r_n^{1-\g_k-\a}}{\Theta(r_n)} g (r_n x_1) $ are bounded in
$L^\infty(B_M^+)$ and $L^\infty(\G^M_n)$ respectively, for every $M>0$. Hence, by Proposition
\ref{eq:reg-estim} and \eqref{eq:bound}, we have that $v_n$ is bounded in
$C^{\delta}(\overline{B_M^+})$ for every $M>0$ and
$\delta\in(0,1/2)$. Furthermore, it is easy to verify that $v_n$ is
bounded in $H^1(B_M^+)$ for every $M>0$.  Then, for every $M>0$ and
$\delta\in(0,1/2)$, $v_n$ converges in $C^{\delta}(\overline{B_M^+})$
(and weakly in $H^1(B_M^+)$) to some
$v\in C^{\delta}_{\rm loc}(\ov{\R^2_+})\cap H^1_{\rm loc}(\R^2_+)$
satisfying
$$
\begin{cases}
  -\Delta   v=0,&\text{in } \R^2_+,\\
\partial_\nu v   = 0,&\text{on }\Gamma^{\infty}_n,\\
v=0,&\text{on }\Gamma^{\infty}_d,
  \end{cases}
$$
and by \eqref{eq:bound}, for every $R>1$,
$$
\|v\|_{L^\infty(B_R^+)}\leq  C R^{\g_k+\a}.
$$
By Lemma \ref{lem:liouville} (below), we deduce that necessarily
\[
 v\in \cS_k.
\]
 This clearly yields a contradiction when passing to the limit  in \eqref{eq:contradict-2} and   \eqref{eq:contradict-3}.
\end{proof}
The following Liouville type result was used in the   proof of Proposition \ref{prop:sharp-estim-u-flat}.
\begin{Lemma}[Liouville theorem] \label{lem:liouville}
Let
$v\in C(\ov{\R^2_+})\cap H^1_{\rm loc}(\R^2_+)$ satisfy
$$
\begin{cases}
  -\Delta   v=0,&\text{in } \R^2_+,\\
\partial_\nu v   = 0,&\text{on }\Gamma^{\infty}_n,\\
v=0,&\text{on }\Gamma^{\infty}_d,
  \end{cases}
$$
and, for some $\a\in (0,1/2)$ and $C>0$,
\begin{equation}\label{eq:28}
\|v\|_{L^\infty(B_R^+)}\leq C\,R^{\g_k+\a}\quad\textrm{for every $R>1$},
\end{equation}
where $\gamma_k=\frac{2k-1}{2}$, $k\in\N\setminus\{0\}$.
Then
\be
 v\in \cS_k.
\ee
\end{Lemma}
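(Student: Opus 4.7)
The plan is to perform a separation of variables: expand $v$ in the angular basis adapted to the mixed boundary conditions, solve the resulting ODEs, and use the $H^1_{\rm loc}$ regularity and the polynomial growth bound \eqref{eq:28} to discard all modes outside $\mathcal{S}_k$.

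First, I would recall from Section \ref{sec:hardy-poincare-type} that $\{\sqrt{2/\pi}\,\cos(\frac{2j-1}{2}t)\}_{j\geq 1}$ is an orthonormal basis of $L^2(0,\pi)$, and each basis element satisfies exactly the boundary conditions $\psi'(0)=0$ and $\psi(\pi)=0$ that correspond in angular variable to the Neumann datum on $\Gamma^{\infty}_n$ and the Dirichlet datum on $\Gamma^{\infty}_d$. Then, for $r>0$, set
\[
\varphi_j(r):=\frac{2}{\pi}\int_0^\pi v(r\cos t,r\sin t)\cos\bigg(\frac{2j-1}{2}t\bigg)\,dt,
\]
so that $v(r\cos t,r\sin t)=\sum_{j\geq1}\varphi_j(r)\cos(\frac{2j-1}{2}t)$ in $L^2(0,\pi)$ for every fixed $r>0$. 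Testing the equation $-\Delta v=0$ against $\eta(r)\cos(\frac{2j-1}{2}t)$ for $\eta\in C^\infty_c(0,\infty)$ (exactly as in \eqref{eq:form-var} with $p\equiv0$ and $q\equiv0$), integration by parts yields the distributional Euler equation
\[
-\varphi_j''(r)-\frac{1}{r}\varphi_j'(r)+\frac{(2j-1)^2}{4r^2}\varphi_j(r)=0\qquad\text{in }(0,\infty),
\]
whose general solution is $\varphi_j(r)=a_j\,r^{\gamma_j}+b_j\,r^{-\gamma_j}$ with $\gamma_j=\frac{2j-1}{2}$.

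Next I would rule out the singular modes. Since $v\in H^1_{\rm loc}(\R^2_+)$ and $v=0$ on $\Gamma^{\infty}_d$, an argument analogous to the one leading to \eqref{eq:17bis} (Parseval in the angular variable combined with standard gradient computations in polar coordinates) shows that $r\mapsto r^{-1}|\varphi_j(r)|^2$ and $r\mapsto r|\varphi_j'(r)|^2$ are integrable on $(0,1)$. The mode $r^{-\gamma_j}$ violates the first integrability condition near $0$, so $b_j=0$ for every $j\geq 1$. Hence $\varphi_j(r)=a_j r^{\gamma_j}$ for all $j$ and all $r>0$.

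Finally I would use the growth bound \eqref{eq:28}. By Parseval, for every $r>0$,
\[
\frac{\pi}{2}\sum_{j=1}^\infty a_j^2\,r^{2\gamma_j}=\int_0^\pi|v(r\cos t,r\sin t)|^2\,dt\leq \pi\,\|v\|_{L^\infty(B_r^+)}^2\leq C\pi\,r^{2\gamma_k+2\alpha}
\]
whenever $r>1$. Dividing by $r^{2\gamma_k+2\alpha}$ and letting $r\to\infty$ forces $a_j=0$ for every $j$ with $\gamma_j>\gamma_k+\alpha$; since $\alpha<1/2$ and $\gamma_{k+1}-\gamma_k=1>\alpha$, this is exactly $j\geq k+1$. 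Therefore
\[
v(r\cos t,r\sin t)=\sum_{j=1}^k a_j\,r^{\gamma_j}\cos\bigg(\frac{2j-1}{2}t\bigg)=\sum_{j=1}^k a_j F_j(r\cos t,r\sin t),
\]
so $v\in\mathcal{S}_k$.

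The only delicate step is the one forcing $b_j=0$: one must carefully convert the $H^1_{\rm loc}$ hypothesis on $v$ into integrability of each Fourier coefficient $\varphi_j$ weighted by $1/r$ near the origin. Everything else is a routine Euler ODE and Parseval argument.
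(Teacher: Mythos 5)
Your overall strategy coincides with the paper's: expand $v$ in the angular basis $\{\cos(\tfrac{2j-1}{2}t)\}_{j\ge 1}$, reduce to the Euler ODE for the coefficients $\varphi_j$, kill the singular branch $r^{-\gamma_j}$ near the origin, and kill the modes with $j>k$ at infinity via Parseval and the growth bound \eqref{eq:28}. The Euler ODE and the Parseval/growth step at $r\to\infty$ are exactly as in the paper and are fine.

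The one place where you genuinely deviate is how you rule out $b_j=0$, and it is precisely the step you flagged as delicate; there is in fact a gap there. You appeal to a Hardy/Parseval argument that needs $r\mapsto r^{-1}|\varphi_j(r)|^2$ and $r\mapsto r|\varphi_j'(r)|^2$ to be integrable on $(0,1)$, and both of these are obtained from $\int_{B_1^+}|\nabla v|^2\,dz<\infty$ together with Lemma~\ref{l:hardy_poincare}. However, the hypothesis of the lemma is $v\in H^1_{\rm loc}(\R^2_+)$, i.e.\ $H^1$ on compact subsets of the \emph{open} half-plane; this gives no control of $\nabla v$ near the flat boundary $\{x_2=0\}$, in particular none near the Dirichlet--Neumann junction at the origin, so you cannot invoke Hardy on $B_1^+$ from the stated hypotheses. (One could try to upgrade to $H^1(B_1^+)$ via a Caccioppoli inequality, but that in turn requires interpreting the boundary conditions weakly for a function that is a priori only $H^1_{\rm loc}$, which is circular here.) The paper avoids all of this by exploiting only the hypothesis $v\in C(\overline{\R^2_+})$ together with $v=0$ on $\Gamma^\infty_d$: continuity at the origin gives $v(0)=0$, hence $\varphi_j(r)=o(1)$ as $r\to 0^+$ (by dominated convergence in the angular integral), which immediately forces $c_2^j=0$ since $r^{-\gamma_j}\to\infty$. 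This route is both shorter and uses the stated hypotheses as given. I would replace your $H^1$-integrability argument with the paper's continuity-at-zero argument, or explicitly strengthen the hypothesis to $v\in H^1_{\rm loc}(\overline{\R^2_+})$ (which is what actually holds in the application in Proposition~\ref{prop:sharp-estim-u-flat}) and then your version goes through.
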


\begin{proof}
Arguing as in the proof of Lemma \ref{l:limitepositivo}, we expand $v$
in Fourier series with respect to the orthonormal basis of
$L^2(0,\pi)$ given in \eqref{eq:17} as
\begin{equation*}
v(r\cos t,r\sin t)=\sum_{j=1}^\infty\varphi_j(r)
\cos\left(\tfrac{2j-1}{2}t\right)
\end{equation*}
where $\varphi_j(r)=\frac2\pi\int_{0}^\pi v(r\cos t,r\sin t) \cos\left(\tfrac{2j-1}{2}t\right)\,dt$.
From assumption \eqref{eq:28} and the Parseval identity we have that
\begin{equation*}
\frac\pi2 \sum_{j=1}^{\infty}\varphi_j^2(r)=\int_0^\pi v^2(r\cos
t,r\sin t)\,dt\leq\pi C^2 r^{2(\gamma_k+\alpha)},\quad\text{for
all }r>1.
\end{equation*}
It follows that
\begin{equation}\label{eq:29}
|\varphi_j(r)|\leq {\rm const\,}r^{\gamma_k+\alpha}\quad\text{for all
}j\geq1\text{ and }r>1,
\end{equation}
for some ${\rm const\,}>0$ independent of $j$ and $r$.

From the equation satisfied by $v$ it follows that the functions
$\varphi_j$ satisfy
\begin{equation*}
-\varphi_j''(r)-\frac{1}{r}\varphi_j'(r)+ \frac14(2j-1)^2
\frac{\varphi_j(r)}{r^2}=0,\quad\text{in }(0,+\infty),
\end{equation*}
and then, for all $j\geq1$, there exist $c_1^j,c_2^j\in\R$
such that
\begin{equation*}
\varphi_j(r)=c_1^j r^{\frac{2j-1}{2}}+
c_2^j r^{\frac{1-2j}{2}}\quad\text{for all }r>0.
\end{equation*}
The fact $v$ is continous and $v(0)=0$ implies that
$\varphi_j(r)=o(1)$ as $r\to 0^+$. As a consequence we have that
$c_2^j=0$ for all $j\geq1$. On the other hand \eqref{eq:29} implies
that $c_1^j=0$ for  all $j>k$. Therefore we conclude that
\begin{equation*}
v(r\cos t,r\sin t)=\sum_{j=1}^k c_1^j r^{\frac{2j-1}{2}}
\cos\left(\tfrac{2j-1}{2}t\right)=\sum_{j=1}^k c_1^j F_j(r\cos t,r\sin t),
\end{equation*}
i.e. $v\in \cS_k$.
\end{proof}

\section{Asymptotics for $u$}\label{sec:asymptotics-u}

We are now in position to prove Theorem \ref{t:main-u}.
\begin{proof}[Proof of Theorem \ref{t:main-u}]
Let $w=u\circ\varphi^{-1}$, with $\varphi:\overline{\mathcal U_R}\to \overline{B_R^+}$ being the
  conformal map constructed in Section \ref{sec:auxiliary-problem}.
  Let
$\g=\frac{2k_0-1}{2}$, with $k_0$ being as in Theorem
  \ref{t:asym}.
  We define  (recalling \eqref{eq:wtau})
\[
\widetilde w^\tau(z):=\tau^{-\gamma}w(\tau z)=\tau^{-\gamma}\sqrt{H( \t )} w^\t(z) .
\]
From Theorem \ref{t:asym} we have that there exists $\beta\neq0$ such
that $\widetilde w^\tau\to \beta F_{k_0}$
 in $H^1(B_r^+)$ for all $r>0$  and in  $C^{0,\mu}_{\rm
    loc}(\overline{\R^2_+}\setminus\{0\})$ for every $\mu\in(0,1)$.  \\

\noindent
\textbf{Claim 1:}   We have
\be \label{eq:wy-to-beta}
w(y)= \b F_{k_0}(y)+o(|y|^\g) \qquad\textrm{ as $|y|\to 0$  and  $y\in {B_R^+}$}.
\ee
If this does not hold true then there exists a sequence of points $y_m\in (B_R^+\cup \G_n^R)\setminus \{0\}$ and $C>0$ such that $y_m\to 0$ and
$$
 |y_m|^{-\gamma}  |  w(y_m)-\b F_{k_0}(y_m)| =   |   \ti{w}^{\t_m}(  z_m)-\b F_{k_0}(z_m)|\geq C>0,
$$
where  $\t_m=|y_m| $ and $ z_m= \frac{y_m}{|y_m|}$. If $m$ is large enough, we get a contradiction with \eqref{eq:shar-asym-w-tau}.  This proves \eqref{eq:wy-to-beta} as   claimed.

Let $\varrho\in(0,1/2)$ and let $p$ and $q$ be the functions introduced in \eqref{eq:5}. By \eqref{eq:wy-to-beta},
by the fact that $p\in L^\infty(B_R^+)$ and $q\in C^1([0,R))$, and by Proposition \ref{prop:sharp-estim-u-flat} applied to $w$, we have that, for every $r\in (0,R)$,
\be \label{eq:w-est-Fwk0}
|w(x)-F_{k_0,r}^w(x)| \leq C r^{\g+\varrho}, \quad\textrm{for every $x\in B_r^+$,}
\ee
for some  positive constant $C>0$ independent of $r$, which could vary from line to line in the sequel.

From \eqref{eq:wy-to-beta} and \eqref{eq:w-est-Fwk0} we deduce that
\be \label{eq:F-k0-Fw-k0}
\sup_{x\in B_r^+} r^{-\gamma}|\b F_{k_0}(x)-F_{k_0,r}^w(x)|\to 0, \quad\textrm{as $r \to 0^+$.}
\ee

\noindent
\textbf{Claim 2:} We have
\be \label{eq:claim-to-prove-F}
|\b F_{k_0}(x)-F_{k_0,r}^w(x)| \leq  C r^{\g+\varrho}, \quad\textrm{for every $x\in B_r^+$}.
\ee
Once this claim is proved, then according to \eqref{eq:w-est-Fwk0}, we can easily deduce that for any $r\in (0,R)$
$$
|w(x)- \b F_{k_0}(x) |\leq |w(x)-  F_{k_0,r}^w(x) |+|F_{k_0,r}^w- \b
F_{k_0}(x) |
 \leq C r^{\g+\varrho},\quad\textrm{for every $x\in B_r^+$}.
$$
In particular,
$$
|w(x)- \b F_{k_0}(x) | \leq C |x|^{\g+\varrho},\quad\textrm{for every $x\in B_{R}^+$}
$$
which finishes   the proof of Theorem \ref{t:main-u}.

\noindent
Let us now prove \textbf{Claim 2}.
Writing $F_{k_0,r}^w(x)=\sum_{j=1}^{k_0}a_j(r) F_j(x) $,  by
\eqref{eq:F-k0-Fw-k0} we have that
\be \label{eq:ak0-beta0}
| \b-a_{k_0}(r) |\to 0,\quad\text{as }r\to0^+ .
\ee
Moreover by taking the $L^2(B^+_r)$-norms   in \eqref{eq:F-k0-Fw-k0}, we find that
$$
(a_{k_0}(r)-\b)^2 r^{2\g+2}+\sum_{j=1}^{k_0-1} a_j^2(r)r^{2\g_j+2}
\leq C r^{2\g+ 2}, \quad\textrm{for every $R>r >0$},
$$
with  $\gamma_j=\frac{2j-1}{2}$.
This   yields, for $j=1,\dots,  k_0-1$,
\be  \label{eq:aj-0}
  |a_j(r)|\leq C r^{\g-\g_j}\to 0  \qquad\textrm{ as $r\to 0$.}
\ee
  From    \eqref{eq:w-est-Fwk0}, we get, for every $x\in B_r^+$ and $R>r>0$,
$$
\bigg|  w(x)- \sum_{j=1}^{k_0}a_j(r) F_j(x)\bigg|\leq   C r^{\varrho+\g}.
$$
Hence, for every $x\in B_{r/2}^+$, we have that
$$
\bigg|  \sum_{j=1}^{k_0}(a_j(r)-a_j(2^{-1}r)) F_j(x)   \bigg|\leq  |  F_{k_0,r}^w(x) -   w (x)  |+   |  F_{k_0, 2^{-1}r}^w(x)  -  w(x)  |\leq C r^{\varrho+\g}.
$$
Taking  the $L^2(B^+_{r/2})$-norms in the previous inequality,  we find that, for every $r\in (0,R)$
$$
\sum_{j=1}^{k_0}| a_{j}(r)  - a_{j}(2^{-1}r)| r^{\g_j}   \leq    C r^{\g+\varrho}.
$$
This implies that
$$
| a_{j}(r)  - a_{j}(2^{-1}r)| \leq C
r^{\varrho+\g-\g_j}\qquad\textrm{for all $1\leq j\leq k_0$ and $r\in(0,R)$.}
$$
From this, \eqref{eq:ak0-beta0} and \eqref{eq:aj-0},   we obtain
\begin{align*}
 |\b - a_{k_0}(r)| r^{-\varrho}+ \sum_{j=1}^{k_0-1} |a_j(r)| r^{-\varrho-\g+\g_j}&\leq \sum_{j=1}^{k_0} \sum_{i=0}^\infty |a_{j}(r 2^{-i-1} )- a_{j}(r 2^{-i})|r^{-\varrho-\g+\g_j} \\
 & \leq C  \sum_{i=0}^\infty  2^{-i\varrho}.
 \end{align*}
This   implies that, for every $x\in B_r^+$,
$$
|\b F_{k_0}(x)-F_{k_0,r}^w(x)|\leq   |\b - a_{k_0}(r)| r^{\g}+ \sum_{j=1}^{k_0-1} |a_j(r)| r^{ \g_j} \leq C r^{\g+\varrho}.
$$
That is  \eqref{eq:claim-to-prove-F} as claimed.
\end{proof}

\begin{remark}
\begin{enumerate}
\item[(i)]  Since $\vp $ is conformal,  we have that $\ti F:= F_{k_0}\circ\vp$ satisfies
 $\ti F\in H^1 (\cU_R)$ and solves the homogeneous  equation
\begin{align}
\begin{cases}
\D \ti  F =0,& \textrm{in $\cU_R$},\\
 \ti  F =0,& \textrm{on   $\G_d\cap\de \cU_R$}\\
\de_{\nu }\ti F=0,& \textrm{on   $\G_n\cap\de \cU_R$}.
\end{cases}
\end{align}
\item[(ii)] Let $\Upsilon: U^+:=\cB\cap U\to B_\rho^+$ define a $C^2$ parametrization  (e.g. given by a system of  \textit{Fermi coordinates}),  for some open neighborhood  $U$ of 0,  with $\Upsilon(0)=0$, $D \Upsilon(0)=Id$,   $\Upsilon(\G_n\cap  U  )\subset \G_n^\rho$ and $\Upsilon(\G_d\cap  U  )\subset \G_d^\rho$. By Theorem \ref{t:main-u},  for every  $\varrho\in (0,1/2)$, there exist $C,\rho_0>0$ such that
\be \label{eq:asympt-u-gen-cor}
 |  u( \Upsilon^{-1}(y))-\b \a^{\frac{2k_0-1}{2}}  F_{k_0}(y)|\leq C |y|^{ \frac{2k_0-1}{2}+ \varrho }, \quad \textrm{for every $y\in   B_{\rho_0}^+$,}
\ee
with $\alpha>0$ as in \eqref{eq:20}. Indeed, to see this, we first observe that \eqref{eq:asympt-u-gen-cor} is equivalent to
\be \label{eq:asympt-u-gen-cor-inv}
 |  u(x)-\b  F_{k_0}(\a \Upsilon(x))|\leq c |x|^{\frac{2k_0-1}{2}+\varrho}, \quad \textrm{for every $x\in   \Upsilon^{-1}(B_{\rho_0}^+)$,}
\ee
for some constant $c>0$.
We then further note that
$$
|D F_{k_0}(x)|\leq c |x|^{\frac{2k_0-1}{2}-1}
$$
and  thus
\begin{align*}
|F_{k_0}(\a \Upsilon(x))-F_{k_0}(\vp(x))&|\leq  c
                                          |x|^{\frac{2k_0-1}{2}-1}|\a
                                          \Upsilon(x) -\vp(x)|\\
&\leq  c |x|^{\frac{2k_0-1}{2}-1} |x|^2\\
&\leq  c |x|^{\frac{2k_0-1}{2}+1},
\end{align*}
in a neighborhood of $0$, where $c>0$ is  a positive constant
  independent of $x$ possibly varying
  from line to line.
This, together with \eqref{eq:asympt-u-sharp} and the triangular inequality, gives \eqref{eq:asympt-u-gen-cor-inv}.
\end{enumerate}
\end{remark}

\begin{proof}[Proof of Corollary \ref{c:upper_bound}]
  From Theorem \ref{t:main-u} and \eqref{eq:asympt-u-gen-cor} it follows that, if $u\in H^1(\Omega)$
  is a non-trivial solution to \eqref{eq:22}, then there exist $k_0\in
  \N\setminus\{0\}$ and $\beta\in\R\setminus\{0\}$ such that, for every $t\in[0,\pi)$,
\begin{equation}\label{eq:30}
\lim_{r\to 0}r^{-\frac{2k_0-1}{2}} u(r\cos t,r\sin
  t)=\beta\alpha^{\frac{2k_0-1}{2}}\cos\left(\tfrac{2k_0-1}{2}t\right).
\end{equation}
Therefore, if $u\geq0$, we have that necessarily $k_0=1$ so that
statement (i) follows. Moreover, \eqref{eq:asympt-u-gen-cor} implies
that
\[
u(r\cos t,r\sin t)\geq \beta\alpha^{1/2}r^{1/2}\cos\left(\tfrac{t}{2}\right)
-Cr^{1/2+\varrho},
\]
which easily provides statement (ii).
\end{proof}

\begin{proof}[Proof of Corollary \ref{c:unique_continuation}]
  Let us assume by contradiction that $u\not\equiv 0$. Then, Theorem
  \ref{t:main-u} and \eqref{eq:asympt-u-gen-cor} imply that
  \eqref{eq:30} holds  for every $t\in[0,\pi)$ and
for some $k_0\in \N\setminus\{0\}$ and $\beta\in\R\setminus\{0\}$.
Taking $n>\frac{2k_0-1}2$, \eqref{eq:30} contradicts the assumption
that $u(x)=O(|x|^n)$ as $|x|\to 0$.
\end{proof}


\section{An example}\label{sec:examples}
In this section we show that the presence of a logarithmic term in
the asymptotic expansion cannot be excluded without assuming
enough regularity of the boundary.

Let us define in the Gauss plane the set
\begin{equation*}
A:=\C\setminus\{x_1\in \R\subset \C:x_1\leq 0\}
\end{equation*}
and the holomorphic function $\eta:A\to\C$ defined as follows:
\begin{align*}
&\eta(z):=\log r+i\theta \quad \text{for any $z=re^{i\theta}\in
A$,
 $r>0$,
$\theta\in\left(-\pi,\pi\right)$}.
\end{align*}
Let us consider the holomorphic function
$$
v(z):=e^{2\eta(-iz)}\eta(-iz) \qquad \text{for
  any } z\in \C\setminus \{ix_2: x_2 \le 0\}
$$
and the set
\begin{equation} \label{Zeta}
\mathcal Z:=\{z\in \C\setminus \{ix_2: x_2 \le 0\}:\Im(v(z))=0\} .
\end{equation}
If $z=re^{i\theta}$ with $r>0$, $\theta\in \left(-\frac
\pi2,\frac{3\pi}2\right)\setminus \{-\frac \pi 4,0,\frac \pi
4,\frac{\pi}2,\frac{3\pi}4,\pi,\frac{5\pi}4\}$, then $z\in \mathcal Z$
if and only
\begin{equation} \label{rho_1} r=\rho(\theta):=
  \exp\left[-\bigg(\theta-\frac{\pi}2\bigg)\cot(2\theta) \right] .
\end{equation}
For some fixed $\sigma\in \left(0,\frac\pi 2\right)$, we define
the curve $\Gamma_+\subset \mathcal Z$ parametrized by
\begin{equation} \label{gamma+}
\Gamma_+:
\begin{cases}
x_1(\theta)=\rho(\theta)\cos\theta \\
x_2(\theta)=\rho(\theta)\sin\theta
\end{cases}
\qquad \theta\in (-\sigma,0) \, .
\end{equation}
If we choose $\sigma>0$ sufficiently small then $\Gamma_+$ is the
graph of a function $h_+$ defined in a open right
neighborhood $U_+$ of $0$. Moreover $h_+$ is a Lipschitz
function in $U_+$, $h_+\in C^2(U_+)$ and
\begin{equation} \label{derivatives}
\lim_{x_1\to 0^+} \frac{h_+(x_1)}{x_1}=0 \, , \quad
\lim_{x_1\to 0^+} h_+'(x_1)=0 \, .
\end{equation}
Then we define the harmonic function
\begin{equation} \label{eq:def-u-ex}
u(x_1,x_2):=-\Im(v(z)) \qquad \text{ for any } z=x_1+ix_2\in
\C\setminus \{iy: y \le 0\} \, .
\end{equation}
In polar coordinates the function $u$ reads
\begin{equation} \label{polar}
u(r,\theta)=r^2 \left[(\log r)
\sin(2\theta)+\left(\theta-\frac{\pi}2\right) \cos(2\theta)\right]
.
\end{equation}
From (\ref{Zeta}--\ref{rho_1}) and \eqref{polar} we deduce that
$u$ vanishes on $\Gamma_+$.

The next step is to find a curve $\Gamma_-$ on which
$\frac{\partial u}{\partial\nu}=0$ where $\nu=(\nu_1,\nu_2)$ is
the unit normal to $\Gamma_-$ satisfying $\nu_2\le 0$. We observe
that
\begin{equation*}
u(x_1,x_2)=x_1
x_2\log(x_1^2+x_2^2)+\left[\arctan\left(\frac{x_2}{x_1}\right)+\frac
\pi 2\right](x_1^2-x_2^2) \quad \text{for any } x_1<0 , \, x_2\in
\R \, .
\end{equation*}
From direct computation we obtain
\begin{align*}
& \frac{\partial u}{\partial
x_1}(x_1,x_2)=x_2\log(x_1^2+x_2^2)+x_2
+2\left[\arctan\left(\frac{x_2}{x_1}\right)+\frac \pi 2\right]x_1 \, , \\[10pt]
& \frac{\partial u}{\partial
x_2}(x_1,x_2)=x_1\log(x_1^2+x_2^2)+x_1
-2\left[\arctan\left(\frac{x_2}{x_1}\right)+\frac \pi 2\right]x_2
\, .
\end{align*}
We now define
\begin{equation*}
H_1(x_1,x_2)=\frac{2\left[\arctan\left(\frac{x_2}{x_1}\right)+\frac
\pi 2\right]x_1}{\log(x_1^2+x_2^2)} \quad \text{and} \quad
H_2(x_1,x_2)=\frac{2\left[\arctan\left(\frac{x_2}{x_1}\right)+\frac
\pi 2\right]x_2}{\log(x_1^2+x_2^2)}
\end{equation*}
on the set $B_1\cap \Pi_-$ where $\Pi_-:=\{(x_1,x_2)\in\R^2:x_1<0\}$.
One can easily check that $H_1,H_2$ admit continuous extensions
defined on $B_1\cap \overline \Pi_-$ which we still denote by $H_1$
and $H_2$ respectively. We also observe that $H_1,H_2\in
C^1(B_1\cap \overline \Pi_-)$. Therefore $H_1,H_2$ may be extended
also on the right of the $x_2$-axis up to restrict them to a disk
of smaller radius. For example one may define
\begin{equation*}
H_1(x_1,x_2):=3H_1(-x_1,x_2)-2H_1(-2x_1,x_2) \quad \text{and}
\quad H_2(x_1,x_2):=3H_2(-x_1,x_2)-2H_2(-2x_1,x_2)
\end{equation*}
for any $(x_1,x_2)\in B_{1/2}\cap \Pi_+$ where we put
$\Pi_+:=\{(x_1,x_2)\in\R^2:x_1>0\}$. One may check that the new
functions $H_1,H_2$ belong to $C^1(B_{1/2})$.

We can now define the functions $V_1,V_2:B_{1/2}\to \R$ by
\begin{align*}
& V_1(x_1,x_2):=
\begin{cases}
x_2+\frac{x_2}{\log(x_1^2+x_2^2)}+H_1(x_1,x_2) & \quad \text{if } (x_1,x_2)\neq (0,0) \\
0           & \quad \text{if } (x_1,x_2)=(0,0) \, ,
\end{cases} \\
& V_2(x_1,x_2):=
\begin{cases}
x_1+\frac{x_1}{\log(x_1^2+x_2^2)}-H_2(x_1,x_2) & \quad \text{if } (x_1,x_2)\neq (0,0) \\
0           & \quad \text{if } (x_1,x_2)=(0,0) \, .
\end{cases}
\end{align*}
One may verify that $V_1,V_2\in C^1(B_{1/2})$. Moreover we have
\begin{align*}
& \frac{\partial V_1}{\partial x_1}(0,0)=0 \, , \quad
\frac{\partial V_1}{\partial x_2}(0,0)=1 \, , \quad \frac{\partial
V_2}{\partial x_1}(0,0)=1 \, , \quad \frac{\partial V_2}{\partial
x_2}(0,0)=0 \, .
\end{align*}
Then we consider the dynamical system
\begin{equation} \label{eq:dyn-sys}
\begin{cases}
x_1'(t)=V_1(x_1(t),x_2(t)) \\[5pt]
x_2'(t)=V_2(x_1(t),x_2(t)) \, .
\end{cases}
\end{equation}
After linearization at $(0,0)$, by \cite[Theorem IX.6.2]{Hartman}
 we deduce that the stable and unstable manifolds
corresponding to the stationary point $(0,0)$ of
\eqref{eq:dyn-sys}, are respectively tangent to the eigenvectors
$(1,-1)$ and $(1,1)$ of the matrix $DV(0,0)$ where $V$ is the
vector field $(V_1,V_2)$.

We define the curve $\Gamma_-$ as the stable manifold of \eqref{eq:dyn-sys} at $(0,0)$
intersected with $B_\varepsilon \cap \Pi_-$ where
$\varepsilon\in (0,\frac 12)$ can be chosen sufficiently small in such
a way that $\Gamma_-$ becomes the graph of a function $h_-$
defined in a open left neighborhood $U_-$ of $0$. Combining the
definitions of $h_+$ and $h_-$ we can introduce a
function $h:U_+\cup U_-\cup \{0\}\to \R$ such that
$h\equiv h_+$ on $U_+$, $h\equiv h_-$ on $U_-$
and $h(0)=0$.

Then we introduce a positive number $R$ sufficiently small and a
domain $\Omega\subseteq B_R$ such that $\Omega=\{(x_1,x_2)\in
B_R:x_2>h(x_1)\}$. One can easily check that the function $u$
defined in \eqref{eq:def-u-ex} belongs to $H^1(\Omega)$. From the
above construction, we deduce that $u=0$ on $\Gamma_+\cap \partial\Omega$
and $\frac{\partial u}{\partial \nu}=0$ on $\Gamma_-\cap \partial\Omega$.
We observe that $\partial\Omega$ admits a corner at $0$ of
amplitude $\frac{3\pi}4$.

The presence of a logarithmic term in $u$ can be explained since the
$C^{2,\delta}$-regularity assumption is not satisfied from the right, i.e. $h_{|U_+\cup \{0\}}\not\in C^{2,\delta}(U_+\cup \{0\})$ for any $\delta\in (0,1)$.
To see this, it is sufficient to study the
behavior of $h(x_1)-x_1 h'(x_1)$ in a right neighborhood
of zero.

By \eqref{gamma+} we know that $\theta\in
\big(-\frac{\pi}2,0\big)$ and hence, if $x_1$ belongs to a
sufficiently small right neighborhood of $0$, by \eqref{rho_1} we
have
\begin{equation} \label{identity} \frac{1}2 \log
  \big(x_1^2+(h_+(x_1))^2\big)
  \tan\left[2\arctan\left(\frac{h_+(x_1)}{x_1}\right)\right]
  +\arctan\left(\frac{h_+(x_1)}{x_1}\right)-\frac{\pi}2=0 .
\end{equation}
By \eqref{derivatives} and \eqref{identity} we have that, as
$x_1\to 0^+$,
\begin{align} \label{tangent} \tan & \left[2\arctan\left(\frac{h_+(x_1)}{x_1}\right)\right]
  =-\frac{2\arctan\big(\frac{h_+(x_1)}{x_1}\big)-\pi}{\log
    \big(x_1^2+(h_+(x_1))^2\big)}
   =\frac{\pi}2 \frac{1}{\log
    x_1}+o\left(\frac{1}{\log x_1}\right)   .
\end{align}
Differentiating both sides of \eqref{identity} and multiplying by
$x_1^2+(h_+(x_1))^2$ we obtain the identity
\begin{multline} \label{eq:derivative}
  \big(x_1+h_+(x_1)h_+'(x_1)\big) \, \tan\left[2\arctan\left(\frac{h_+(x_1)}{x_1}\right)\right] \\
  +\left\{ 1+\frac{\log \big(x_1^2+(h_+(x_1))^2\big)}
    {\cos^2\left[2\arctan\left(\frac{h_+(x_1)}{x_1}\right)\right]} \right\}
  \big(x_1h_+'(x_1)-h_+(x_1) \big)=0
\end{multline}
and hence \eqref{derivatives} and \eqref{tangent} yield
\begin{equation} \label{stima} x_1h_+'(x_1)-h_+(x_1)\sim
  -\frac{\pi}4 \,
  \frac{x_1}{\log^2 x_1} \qquad \text{as } x_1\to 0^+ .
\end{equation}
This shows that $h_+ \not \in C^2(U_+\cup\{0\})$ (and a fortiori cannot be extended to be of class $C^{2,\delta}$).

We observe that the reason of the appearance of a logarithmic term is not due to the presence of a corner at
$0$; indeed we are going to construct a domain with $C^1$-boundary for
which the same phenomenon occurs. In order to do this, it is
sufficient to take the domain $\Omega$ and the function $u$
defined above and to apply a suitable deformation in order to
remove the angle. We recall that $\Omega$ exhibits a corner at $0$ whose amplitude is $\frac{3\pi}4$.

For this reason, we define $F:\C\setminus\{ix_2:x_2\le 0\}\to \C$ by
\begin{equation*}
F(z):=r^{\frac 43} \, e^{i\frac 43 \theta} \quad \text{for any }
z=re^{i\theta} \, , \ r>0\, , \theta\in
\left(-\frac{\pi}2,\frac{3\pi}2\right) .
\end{equation*}
We observe that, up to shrink $R$ if necessary, the map
$F:\Omega\to F(\Omega)$ is invertible so that we may define
$\widetilde\Omega:=F(\Omega)$ and $\widetilde u:\widetilde
\Omega\to \R$, $\widetilde u(y_1,y_2):=u(F^{-1}(y_1,y_2))$ for any
$(y_1,y_2)\in \widetilde\Omega$.

We also define the curves $\widetilde\Gamma_+:=F(\Gamma_+)$ and
$\widetilde\Gamma_-:=F(\Gamma_-)$. Up to shrink $R$ if necessary,
we may assume that $\widetilde\Gamma_+$ and $\widetilde\Gamma_-$
are respectively the graphs of two functions $\widetilde h_+$
and $\widetilde h_-$.

It is immediate to verify that $\widetilde u=0$ on
$\widetilde\Gamma_+$. We also prove that $\frac{\partial\widetilde
u}{\partial \nu}=0$ on $\widetilde\Gamma_-$. To avoid confusion
with the notion of normal unit vectors to $\Gamma_-$ and
$\widetilde\Gamma_-$ we denote them respectively with
$\nu_{\Gamma_-}$ and $\nu_{\widetilde\Gamma_-}$.
Since $\widetilde u$ is still harmonic, $\frac{\partial u}{\partial \nu_{\Gamma_-}}=0$ on $\Gamma_-$ and $F$ is a conformal mapping, for any $\widetilde \varphi\in C^\infty_c(\widetilde\Omega\cup \widetilde \Gamma_-)$, we have
\begin{align*}
& \int_{\widetilde\Gamma_-} \frac{\partial \widetilde u}{\partial\nu_{\widetilde \Gamma_-}}\widetilde \varphi\, ds
=\int_{\widetilde\Omega} \nabla\widetilde u(y)\nabla \widetilde \varphi(y)\, dy
=\int_{\widetilde\Omega} [\nabla u(F^{-1}(y))(DF(F^{-1}(y)))^{-1}] \nabla \widetilde \varphi(y)\, dy \\[7pt]
& =\int_{\Omega} \big[\nabla u(x)(DF(x))^{-1}\big] \nabla \widetilde \varphi(F(x))\, |\mathop{\rm det}(DF(x))| \, dx \\[7pt]
& =\int_{\Omega} \big[\nabla u(x)(DF(x))^{-1}\big] \big[\nabla\varphi(x)(DF(x))^{-1}\big]\, |\mathop{\rm det}(DF(x))| \, dx \\[7pt]
& =\int_{\Omega} \nabla u(x)\nabla\varphi(x) \, dx=\int_{\Gamma_-} \frac{\partial u}{\partial \nu_{\Gamma_-}}\varphi \, ds=0
\end{align*}
where we put $\varphi(x)=\widetilde \varphi(F(x))$. This proves that $\frac{\partial\widetilde u}{\partial \nu_{\widetilde \Gamma_-}}=0$ on $\widetilde \Gamma_-$.

Finally we prove for $\widetilde h_+$ an estimate similar to
\eqref{stima}.

From the definition of $F$ it follows that $\widetilde\Gamma_+$
admits a representation in polar coordinates of the type
\begin{equation}
r=\widetilde\rho(\theta):=\exp\left[-\left(\theta-\frac{2\pi}3\right)\cot\left(\frac{3\theta}2\right)\right]
\, .
\end{equation}
Proceeding exactly as for \eqref{identity}-\eqref{tangent} one can
prove that
\begin{equation} \label{identity-2} \frac{1}2 \log
  \big(x_1^2+(\widetilde h_+(x_1))^2\big)
  \tan\left[\frac 32\arctan\left(\frac{\widetilde h_+(x_1)}{x_1}\right)\right]
  +\arctan\left(\frac{\widetilde h_+(x_1)}{x_1}\right)-\frac{2\pi}3=0 \, .
\end{equation}
As we did for $h_+$, also for the function $\widetilde h_+$ one can prove that
\begin{equation} \label{eq:h-beha}
\lim_{x_1\to 0} \frac{\widetilde h_+(x_1)}{x_1}=0 \, , \quad
\lim_{x_1\to 0^+} \widetilde h_+'(x_1)=0 \, .
\end{equation}
By \eqref{eq:h-beha} we have
\begin{align} \label{tangent-2}
\tan  \left[\frac 32\arctan\left(\frac{\widetilde h_+(x_1)}{x_1}\right)\right]
  &=-\frac{2\arctan\big(\frac{\widetilde h_+(x_1)}{x_1}\big)-\frac{4\pi}3}{\log
    \big(x_1^2+(\widetilde h_+(x_1))^2\big)}\\
&\notag   =\frac{2\pi}3 \frac{1}{\log
    x_1}+o\left(\frac{1}{\log x_1}\right) \quad \text{as } x_1\to 0^+ \,   .
\end{align}
Differentiating both sides of \eqref{identity-2} and multiplying
by $x_1^2+(\widetilde h_+(x_1))^2$ we obtain the identity
\begin{multline} \label{eq:derivative-2}
  \big(x_1+\widetilde h_+(x_1)\widetilde h_+'(x_1)\big) \,
  \tan\left[\frac 32\arctan\left(\frac{\widetilde h_+(x_1)}{x_1}\right)\right] \\
  +\left\{ 1+\frac{3\log \big(x_1^2+(\widetilde h_+(x_1))^2\big)}
    {4\cos^2\left[\frac 32\arctan\left(\frac{\widetilde h_+(x_1)}{x_1}\right)\right]} \right\}
  \big(x_1\widetilde h_+'(x_1)-\widetilde h_+(x_1) \big)=0
\end{multline}
By \eqref{eq:h-beha}, \eqref{tangent-2} and \eqref{eq:derivative-2}, we obtain
\begin{equation} \label{stima-2} x_1\widetilde h_+'(x_1)-\widetilde h_+(x_1)\sim
  -\frac{4\pi}9 \,
  \frac{x_1}{\log^2 x_1} \qquad \text{as } x_1\to 0^+ .
\end{equation}
The above arguments show that $\partial\widetilde\Omega$ is of class
$C^1$ but not of class $C^{1,\delta}$ (and a fortiori not of class $C^{2,\delta}$).

\bigskip

{\bf Acknowledgments} M.M. Fall is supported by the Alexander von
Humboldt foundation.  V. Felli is partially supported by the PRIN2015
grant ``Variational methods, with applications to problems in
mathematical physics and geometry''.  A. Ferrero is partially
supported by the PRIN2012 grant ``Equazioni alle derivate parziali di
tipo ellittico e parabolico: aspetti geometrici, disu\-gua\-glian\-ze
collegate, e applicazioni'' and by the Progetto di Ateneo 2016 of the
University of Piemonte Orientale ``Metodi analitici, numerici e di
simulazione per lo studio di equazioni differenziali a derivate
parziali e applicazioni''.  A. Ferrero and V. Felli are partially
supported by the INDAM-GNAMPA 2017 grant ``Stabilit\`{a} e analisi
spettrale per problemi alle derivate parziali''.

\end{document}